\newtheorem{theorem}{Theorem}[section]
\newtheorem{corollary}{Corollary}[section]
\newtheorem{lemma}{Lemma}[section]
\newtheorem{proposition}{Proposition}[section]
\newtheorem*{keyword}{Keywords}
\newtheorem*{remark}{Remark}
\newtheorem{example}{Example}[section]
\newtheorem*{definition}{Definition}
\newcommand*{\Scale}[2][4]{\scalebox{#1}{$#2$}}
\begin{document}
\title{Recurrent Sums and Partition Identities}
\author{Roudy El Haddad \\
Universit\'e La Sagesse, Facult\'e de g\'enie, Polytech }
\maketitle

\begin{abstract}
Sums of the form $\sum_{N_m=q}^{n}{\cdots \sum_{N_1=q}^{N_2}{a_{(m);N_m}\cdots a_{(1);N_1}}}$ where the $a_{(k);N_k}$'s are same or distinct sequences appear quite often in mathematics. We will refer to them as \textit{recurrent sums}. In this paper, we introduce a variety of formulas to help manipulate and work with this type of sums. We begin by developing variation formulas that allow the variation of a recurrent sum of order $m$ to be expressed in terms of lower order recurrent sums. We then proceed to derive theorems (which we will call inversion formulas) which show how to interchange the order of summation in a multitude of ways. Later, we introduce a set of new \textit{partition identities} in order to then prove a reduction theorem which permits the expression of a recurrent sum in terms of a combination of non-recurrent sums. Finally, we apply this reduction theorem to a recurrent form of two famous types of sums: The $p$-series and the sum of powers. 
\end{abstract}

\begin{keyword}
{\em Recurrent sums, Partitions, Stirling numbers of the first kind, Bell polynomials, Multiple harmonic series, Riemann zeta function, Bernoulli numbers, Faulhaber formula. } \\
\thanks{\bf{MSC 2020:}{ primary 11P84 secondary 11B73, 11M32, 05A18} }
\end{keyword}

\section{Introduction and Notation} \label{intro}
The harmonic series was first studied and proven to diverge in the 14th century by Nicole Oresme \cite{Oresme}. Later, in the 17th century, new proofs for this divergence were provided by Pietro Mengoli \cite{Mengoli}, Johann Bernoulli \cite{JohannBernoulli}, and Jacob Bernoulli \cite{JacobBernoulli1,JacobBernoulli2}. 
However, a more general form of this series does converge. Euler was the first to study such sums of the form 
$$
\zeta(s)=\sum_{n=1}^{\infty}{\frac{1}{n^s}}
$$ 
where $s$ is a real number. In the famous Basel problem, Euler proved that $\zeta(2)=\frac{\pi^2}{6}$ (see \cite{Basel1,Basel2,Basel3}. Fourteen additional proofs can be found in \cite{Basel4}). He later provided a general formula for this zeta function for positive even values of $s$. 
Euler's definition was then extended to a complex variable $s$ by Riemann in his 1859 article ``On the Number of Primes Less Than a Given Magnitude''. More recently, the multiple harmonic series, an even more general form of the zeta function, has been introduced and studied. Note that Euler was the first to study these multiple harmonic series for length 2 in \cite{euler}. 
A multiple harmonic series (MHS) or multiple zeta values (MZV) is defined as: 
$$
\zeta(s_1,s_2, \ldots, s_k)
=\sum_{1 \leq N_1 < N_2 < \cdots < N_k}{\frac{1}{N_1^{s_1}N_2^{s_2}\cdots N_k^{s_k}}}
.$$
A very important variant of the MHS (see \cite{kuba2019,hoffman1996sums,murahara2019interpolation}) often referred to as multiple zeta star values MZSV or multiple harmonic star series MHSS (or simply multiple zeta values) is defined by: 
$$
\zeta^{\star}(s_1,s_2, \ldots, s_k)
=\sum_{1 \leq N_1 \leq N_2 \leq \cdots \leq N_k}{\frac{1}{N_1^{s_1}N_2^{s_2}\cdots N_k^{s_k}}}
.$$
This variant of the multiple harmonic series is directly related to the Riemann zeta function $\zeta(s)$ \cite{hoffman1992multiple,granville1997decomposition}. Additionally, it is involved in a variety of sums and series including the Arakawa–Kaneko zeta function \cite{xuduality} and Euler sums. 

Such sums have tremendous importance in number theory.
They have been of interest to mathematicians for a long time and have been systematically studied since the 1990s with the work of Hoffman \cite{hoffman1992multiple,HOFFMAN1997} and Zagier \cite{zagier1994values}. However, their importance is not limited to Number Theory. In fact, such sums/series have appeared in physics even before the phrase “multiple zeta values” had been coined. As an example, the number $\zeta(\overline{6},\overline{2})$ appeared in the quantum field theory literature in 1986 \cite{broadhurst1986exploiting}. They play a major role in the connection of knot theory with quantum field theory \cite{broadhurst2013multiple,kassel2012quantum}. MZVs and MZSVs became even more important after they became needed for higher order calculations in quantum electrodynamics (QED) and quantum chromodynamics (QCD) \cite{blumlein1999harmonic,blumlein2010multiple}. 

These sums are a particular case of what we called recurrent sums as they are of the form $\sum_{1\leq N_1 \leq \cdots \leq N_m \leq n}{a_{(m);N_m} \cdots a_{(1);N_1}}$ with $a_{(i);N_i}=\frac{1}{N_i^{s_i}}$ for all $i$. The particular case has been extensively studied while the general case received much less interest. Although there are hundreds if not thousands of formulae to help in the study of multiple harmonic star sums and multiple zeta star values, barely any formulae can be found for its general counterpart. In this article, we are interested in studying this more general form which is expressed as follows: 
$$\sum_{1\leq N_1 \leq \cdots \leq N_m \leq n}{a_{(m);N_m} \cdots a_{(1);N_1}}.$$
We will also consider the particular case where all sequences are the same: 
$$\sum_{1\leq N_1 \leq \cdots \leq N_m \leq n}{a_{N_m} \cdots a_{N_1}}.$$ 
This structure of sums appears in a variety of areas of mathematics. The objective is to develop formulae to improve and facilitate the way we work with recurrent sums. This includes deriving formulae to calculate the variation of such sums, formulae to interchange the order of summation as well as formulae to represent recurrent sums in terms of a combination of non-recurrent sums. Note that this type of sums is intimately related to partitions as they appear in the representation of recurrent sums as a combination of simple non-recurrent sums. Therefore, this article will also focus on partition identities that are needed to prove the previously stated theorems as well as the ones that can be derived from these same theorems. Among these partition identities that can be found through these theorems, a definition of binomial coefficients in terms of a sum over partitions will be presented. Similarly, we produce some identities involving special sums, over partitions, of Bernoulli numbers. Furthermore, we are also interested in applying the formulae develop for the general case to some particular cases. First, we will apply our results to the multiple sums of powers in order to generalize Faulhaber's formula. 
Then, we will go back to the most famous particular case which is the MZSV and show how our results on the general case can improve in this case. 
A particularly beautiful identity that we will present is the following which relates the recurrent sum of $\frac{1}{N^2}$ to the zeta function for positive even values:  
$$\sum_{N_m=1}^{\infty}{\cdots \sum_{N_1=1}^{N_2}{\frac{1}{N_m^2\cdots N_1^2}}}
=\sum_{\sum{i.y_{k,i}}=m}{\prod_{i=1}^{m}{\frac{1}{(y_{k,i})!i^{y_{k,i}}} \left(\zeta(2i)\right)^{y_{k,i}}}}
=\left(2-\frac{1}{2^{2(m-1)}}\right)\zeta(2m).
$$
Although this paper focuses on the generalized version of the multiple zeta star values, the multiple zeta values itself is a particular form of a type of sums presented in \cite{PolynomialSums} and which is closely related to the recurrent sums by the relations also presented in that cited article.

The main theorems of this paper have potential applications such as the following: Surprisingly, this form appears in the general formula for the $n$-th integral of $x^m(\ln x)^{m'}$. In the unpublished paper \cite{nIntegral}, the relations presented in this paper will be used to derive and prove this general formula for the $n$-th integral of $x^m(\ln x)^{m'}$. 
In paper \cite{PolynomialSums}, the partition identities here presented are combined with additional partition identities in order to produce identities for odd and even partitions.

Let us now introduce some notation in order to facilitate the representation of such sums in this paper: For any $m,q,n \in \mathbb{N}$ where $n \geq q$ and for any set of sequences $a_{(1);N_1},\cdots,a_{(m);N_m}$ defined in the interval $[q,n]$, let $R_{m,q,n}(a_{(1);N_1},\cdots,a_{(m);N_m})$ represent the general recurrent sum of order $m$ for the sequences $a_{(1);N_1},\cdots,a_{(m);N_m}$ with lower an upper bounds respectively $q$ and $n$. For simplicity, however, we will denote it simply as $R_{m,q,n}$.
\begin{equation} \label{eq1}
\begin{split}
R_{m,q,n}
&=\sum_{N_m=q}^{n}{a_{(m);N_m} \cdots \sum_{N_2=q}^{N_3}{a_{(2);N_2}\sum_{N_1=q}^{N_2}{a_{(1);N_1}}}}\\
&=\sum_{N_m=q}^{n}{\cdots \sum_{N_2=q}^{N_3}{\sum_{N_1=q}^{N_2}{a_{(m);N_m}\cdots a_{(2);N_2}a_{(1);N_1}}}} \\
&=\sum_{q\leq N_1 \leq \cdots \leq N_m \leq n}{a_{(m);N_m}\cdots a_{(2);N_2}a_{(1);N_1}}.\\
\end{split}
\end{equation}
The most common case of a recurrent sum is that where all sequences are the same, 
\begin{equation} \label{eq2}
\begin{split}
R_{m,q,n}(a_{N_1} \cdots a_{N_m})
&=\sum_{N_m=q}^{n}{a_{N_m} \cdots \sum_{N_2=q}^{N_3}{a_{N_2}\sum_{N_1=q}^{N_2}{a_{N_1}}}}\\
&=\sum_{N_m=q}^{n}{\cdots \sum_{N_2=q}^{N_3}{\sum_{N_1=q}^{N_2}{a_{N_m}\cdots a_{N_2}a_{N_1}}}} \\
&=\sum_{q\leq N_1 \leq \cdots \leq N_m \leq n}{a_{N_m}\cdots a_{N_2}a_{N_1}}.\\
\end{split}
\end{equation}
For simplicity, we will denote it as $\hat{R}_{m,q,n}$. \\
This type of sums is described as recurrent because they can also be expressed using the following recurrent form: 
\begin{equation}
\begin{cases}
R_{m,q,n}=\sum_{N_m=q}^{n}{a_{(m);N_m}R_{m-1,q,N_m}}\\
R_{0,i,j}\,\,\,=1 \,\, \forall i,j \in \mathbb{N}.
\end{cases}
\end{equation}
\begin{remark}
{\em A recurrent sum of order $0$ is always equal to $1$. It is not equivalent to an empty sum $($which is equal to $0)$.}
\end{remark} 
In this paper, this type of sums will be studied. In Section \ref{Variation Formulas}, formulas for the calculation of variation of these sums in terms of lower order recurrent sums will be presented. Then, in Section \ref{Inversion Formulas}, inversion formulas will be presented, which will allow the interchange of the order of summation in such sums. Finally, in Section \ref{Reduction Formulas}, we will present a reduction formula that allows the representation of a recurrent sum as a combination of simple (non-recurrent) sums. These relations will be, then, used to calculate certain special sums such as the recurrent harmonic sum and the recurrent equivalent to the Faulhaber formula.    

\section{Variation Formulas} \label{Variation Formulas}
In this section, we will develop formulas to express the variation of a recurrent sum of order $m$ ($R_{m,q,n+1}-R_{m,q,n}$) in terms of lower order recurrent sums. Equivalently, these formulas can be used to express $R_{m,q,n+1}$ in terms of $R_{m,q,n}$ and lower order recurrent sums. 
\subsection{Simple expression}
We start by proving the most basic form for the variation formula as illustrated by the following Lemma. This is needed in order to prove the general form of this formula. 
\begin{lemma} \label{Lemma 2.1}
For any $m,q,n \in \mathbb{N}$, we have that 
$$R_{m+1,q,n+1}=a_{(m+1);n+1} R_{m,q,n+1}+R_{m+1,q,n}.$$
\end{lemma}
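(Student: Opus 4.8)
The plan is to work directly from the recurrent form of the definition supplied in the excerpt, namely $R_{k,q,n}=\sum_{N_k=q}^{n}{a_{(k);N_k}R_{k-1,q,N_k}}$ together with the convention $R_{0,i,j}=1$. The whole identity merely records what happens when one peels the top term off the outermost summation, so the argument will be very short; the work lies almost entirely in index bookkeeping rather than in any substantive step.

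First I would apply the recurrent definition at order $m+1$ with upper bound $n+1$, writing
$$R_{m+1,q,n+1}=\sum_{N_{m+1}=q}^{n+1}{a_{(m+1);N_{m+1}}R_{m,q,N_{m+1}}}.$$
Next I would split this sum into the contribution of the top index $N_{m+1}=n+1$ and the remaining terms with $N_{m+1}$ ranging from $q$ to $n$, obtaining
$$R_{m+1,q,n+1}=a_{(m+1);n+1}R_{m,q,n+1}+\sum_{N_{m+1}=q}^{n}{a_{(m+1);N_{m+1}}R_{m,q,N_{m+1}}}.$$
Finally I would recognize the trailing sum as exactly $R_{m+1,q,n}$, again by the recurrent definition but now with upper bound $n$, which gives the claimed equation immediately.

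There is no genuine obstacle here; the only points needing a little care are bookkeeping ones, namely checking that the recurrent definition is legitimately invoked at both upper bounds $n+1$ and $n$ (both permissible since $n\geq q$ guarantees nonempty, well-defined sums). As a sanity cross-check, I would note that the same identity can be read straight off the triangular-index definition $R_{m+1,q,n+1}=\sum_{q\leq N_1\leq\cdots\leq N_{m+1}\leq n+1}{a_{(m+1);N_{m+1}}\cdots a_{(1);N_1}}$ by partitioning the index set according to whether $N_{m+1}=n+1$ (contributing $a_{(m+1);n+1}R_{m,q,n+1}$, since the inner indices then range freely over $q\leq N_1\leq\cdots\leq N_m\leq n+1$) or $N_{m+1}\leq n$ (contributing $R_{m+1,q,n}$). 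This second viewpoint makes clear that the splitting is exhaustive and disjoint, confirming the result.
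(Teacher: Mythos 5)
Your proposal is correct and follows essentially the same route as the paper: both peel the $N_{m+1}=n+1$ term off the outermost summation, identify the resulting inner factor as $a_{(m+1);n+1}R_{m,q,n+1}$, and recognize the remainder as $R_{m+1,q,n}$. The only cosmetic difference is that you invoke the recurrence form $R_{m+1,q,n+1}=\sum_{N_{m+1}=q}^{n+1}a_{(m+1);N_{m+1}}R_{m,q,N_{m+1}}$ while the paper writes the nested sums explicitly; these are the same decomposition.
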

\begin{proof}
\begin{equation*}
\begin{split}
R_{m+1,q,n+1} 
& = \sum_{N_{m+1}=q}^{n+1}{\cdots \sum_{N_1=q}^{N_2}{a_{(m+1);N_{m+1}}\cdots a_{(1);N_1}}} \\
& = a_{(m+1);n+1} \sum_{N_m=q}^{n+1}{\cdots \sum_{N_1=q}^{N_2}{a_{(m);N_m}\cdots a_{(1);N_1}}}+\sum_{N_{m+1}=q}^{n}{\cdots \sum_{N_1=q}^{N_2}{a_{(m+1);N_{m+1}}\cdots a_{(1);N_1}}} \\
& = a_{(m+1);n+1} R_{m,q,n+1}+R_{m+1,q,n}.
\end{split}
\end{equation*}
\end{proof}
Now we apply the basic case from Lemma \ref{Lemma 2.1} to show the general variation formula that allows $R_{m,q,n+1}$ to be expressed in terms of $R_{m,q,n}$ and of recurrent sums of order going from $0$ to $(m-1)$. 
\begin{theorem} \label{Theorem 2.1}
For any $m,q,n \in \mathbb{N}$ where $n \geq q$ and for any set of sequences $a_{(1);N_1},\cdots,a_{(m);N_m}$ defined in the interval $[q,n+1]$, we have that 
$$\sum_{N_m=q}^{n+1}{\cdots \sum_{N_1=q}^{N_2}{a_{(m);N_m}\cdots a_{(1);N_1}}}
=\sum_{k=0}^{m}{\left(\prod_{j=0}^{m-k-1}{a_{(m-j);n+1}}\right)\left(\sum_{N_k=q}^{n}{\cdots \sum_{N_1=q}^{N_2}{a_{(k);N_k}\cdots a_{(1);N_1}}}\right)}.$$
Using the notation from Eq.~\eqref{eq1}, this theorem can be written as 
$$
R_{m,q,n+1}=\sum_{k=0}^{m}{\left(\prod_{j=0}^{m-k-1}{a_{(m-j);n+1}}\right)R_{k,q,n}}.
$$
\end{theorem}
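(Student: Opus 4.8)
The plan is to prove the stated formula for $R_{m,q,n+1}$ by induction on $m$, using Lemma~\ref{Lemma 2.1} as the engine that drives the inductive step. The base case $m=0$ is immediate since $R_{0,q,n+1}=1$ and the right-hand sum collapses to its $k=0$ term, where the empty product $\prod_{j=0}^{-1}a_{(-j);n+1}$ equals $1$ and $R_{0,q,n}=1$, so both sides are $1$. (A quick sanity check at $m=1$ is also worthwhile: Lemma~\ref{Lemma 2.1} gives $R_{1,q,n+1}=a_{(1);n+1}R_{0,q,n+1}+R_{1,q,n}=a_{(1);n+1}+R_{1,q,n}$, which matches the $k=0,1$ terms of the claimed sum.)

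For the inductive step, I would assume the formula holds for order $m$ with an arbitrary family of sequences, and then establish it for order $m+1$. First I would apply Lemma~\ref{Lemma 2.1} in the form
\begin{equation*}
R_{m+1,q,n+1}=a_{(m+1);n+1}\,R_{m,q,n+1}+R_{m+1,q,n}.
\end{equation*}
The second summand $R_{m+1,q,n}$ is already exactly the $k=m+1$ term of the target formula, since its coefficient is the empty product $\prod_{j=0}^{-1}a_{(m+1-j);n+1}=1$. To the first summand I would apply the induction hypothesis to $R_{m,q,n+1}$, expanding it as $\sum_{k=0}^{m}\bigl(\prod_{j=0}^{m-k-1}a_{(m-j);n+1}\bigr)R_{k,q,n}$.

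The remaining task is a bookkeeping merge. Multiplying the induction hypothesis by $a_{(m+1);n+1}$ absorbs that factor into each product: for the term indexed by $k$, the coefficient becomes $a_{(m+1);n+1}\prod_{j=0}^{m-k-1}a_{(m-j);n+1}=\prod_{j=0}^{(m+1)-k-1}a_{((m+1)-j);n+1}$, where the new factor corresponds to the shifted index $j=m-k$. Thus the first summand contributes precisely the $k=0,\dots,m$ terms of the order-$(m+1)$ formula, and the leftover $R_{m+1,q,n}$ supplies the $k=m+1$ term, completing the induction. The main obstacle is purely notational rather than conceptual: one must verify carefully that prepending the factor $a_{(m+1);n+1}$ and re-indexing the product gives exactly the product range $\prod_{j=0}^{(m+1)-k-1}$ with the sequence subscripts shifted from $a_{(m-j)}$ to $a_{((m+1)-j)}$, and that the empty-product conventions line up correctly at the endpoints $k=0$ and $k=m+1$.
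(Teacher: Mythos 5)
Your proposal is correct and follows essentially the same route as the paper: induction on $m$ driven by Lemma~\ref{Lemma 2.1}, expanding $R_{m,q,n+1}$ via the induction hypothesis, absorbing the factor $a_{(m+1);n+1}$ into each product by re-indexing, and identifying the leftover $R_{m+1,q,n}$ as the $k=m+1$ term. The only cosmetic difference is that you anchor the induction at $m=0$ while the paper starts at $m=1$; both are valid.
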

\begin{proof}
$ \\ $
1. Base Case: verify true for $m=1$.
\begin{equation*}
\begin{split}
\sum_{k=0}^{1}{\left(\prod_{j=0}^{-k}{a_{(1-j);n+1}}\right)R_{k,q,n}}
&=\left(\prod_{j=0}^{0}{a_{(1-j);n+1}}\right)R_{0,q,n}
+\left(\prod_{j=0}^{-1}{a_{(1-j);n+1}}\right)R_{1,q,n}\\
&=(a_{(1);n+1})(1)+(1)\left(\sum_{N_1=q}^{n}{a_{(1);N_1}}\right)\\
&=R_{1,q,n+1}.
\end{split}
\end{equation*}
2. Induction hypothesis: assume the statement is true until $m$. 
$$
R_{m,q,n+1}=\sum_{k=0}^{m}{\left(\prod_{j=0}^{m-k-1}{a_{(m-j);n+1}}\right)R_{k,q,n}}
.$$
3. Induction step: we will show that this statement is true for ($m+1$).\\
We have to show the following statement to be true: 
$$
R_{m+1,q,n+1}=\sum_{k=0}^{m+1}{\left(\prod_{j=0}^{m-k}{a_{(m+1-j);n+1}}\right)R_{k,q,n}}
.$$
$ \\ $
From Lemma \ref{Lemma 2.1}, 
$$R_{m+1,q,n+1}=a_{(m+1);n+1} R_{m,q,n+1}+R_{m+1,q,n}.$$
By applying the induction hypothesis, 
\begin{equation*}
\begin{split}
R_{m+1,q,n+1}
&=a_{(m+1);n+1}\sum_{k=0}^{m}{\left(\prod_{j=0}^{m-k-1}{a_{(m-j);n+1}}\right)R_{k,q,n}}
+R_{m+1,q,n}\\
&=a_{(m+1);n+1}\sum_{k=0}^{m}{\left(\prod_{j=1}^{m-k}{a_{(m+1-j);n+1}}\right)R_{k,q,n}}
+R_{m+1,q,n}\\
&=\sum_{k=0}^{m}{\left(\prod_{j=0}^{m-k}{a_{(m+1-j);n+1}}\right)R_{k,q,n}}+R_{m+1,q,n}.
\end{split}
\end{equation*}
Noticing that 
$$
\sum_{k=m+1}^{m+1}{\left(\prod_{j=0}^{m-k}{a_{(m+1-j);n+1}}\right)R_{k,q,n}}
=R_{m+1,q,n}
$$
hence, 
$$
R_{m+1,q,n+1}
=\sum_{k=0}^{m+1}{\left(\prod_{j=0}^{m-k}{a_{(m+1-j);n+1}}\right)R_{k,q,n}}
.$$
Hence, the theorem is proven by induction. 
\end{proof}
\begin{corollary} \label{Corollary 2.1}
If all sequences are the same, Theorem \ref{Theorem 2.1} will be reduced to the following form, 
$$\sum_{N_m=q}^{n+1}{\cdots \sum_{N_1=q}^{N_2}{a_{N_m}\cdots a_{N_1}}}=\sum_{k=0}^{m}{\left(a_{n+1}\right)^{m-k}\left(\sum_{N_k=q}^{n}{\cdots \sum_{N_1=q}^{N_2}{a_{N_k}\cdots a_{N_1}}}\right)}.$$
Using the notation from Eq.~\eqref{eq2}, this theorem can be written as 
$$
\hat{R}_{m,q,n+1}=\sum_{k=0}^{m}{\left(a_{n+1}\right)^{m-k}\hat{R}_{k,q,n}}.
$$
\end{corollary}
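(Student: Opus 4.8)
The plan is to obtain Corollary~\ref{Corollary 2.1} as a direct specialization of Theorem~\ref{Theorem 2.1}. Since the corollary is simply the statement of the theorem in the case where all the sequences coincide, namely $a_{(k);N_k}=a_{N_k}$ for every index $k$, the entire work reduces to simplifying the product factor $\prod_{j=0}^{m-k-1}{a_{(m-j);n+1}}$ under this hypothesis. First I would substitute $a_{(m-j);n+1}=a_{n+1}$ into that product, observing that every factor becomes the same quantity $a_{n+1}$ regardless of the index $m-j$.

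Next I would count the number of factors in the product. The index $j$ ranges from $0$ to $m-k-1$, which gives exactly $(m-k-1)-0+1=m-k$ factors, each equal to $a_{n+1}$. Hence the product collapses to $\left(a_{n+1}\right)^{m-k}$, and simultaneously each general recurrent sum $R_{k,q,n}$ becomes the same-sequence recurrent sum $\hat{R}_{k,q,n}$ by the definition in Eq.~\eqref{eq2}. Substituting these two simplifications into the conclusion of Theorem~\ref{Theorem 2.1} yields
$$
\hat{R}_{m,q,n+1}=\sum_{k=0}^{m}{\left(a_{n+1}\right)^{m-k}\hat{R}_{k,q,n}},
$$
which is precisely the claimed identity, and expanding the notation back into explicit nested sums gives the first displayed form in the corollary.

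There is essentially no hard part here: the result is a mechanical corollary, and the only point demanding any care is the bookkeeping of the product's exponent. One should double-check the boundary behavior of the exponent at the endpoints of the summation, namely that $k=m$ gives $\left(a_{n+1}\right)^{0}=1$ (an empty product, consistent with $\prod_{j=0}^{-1}$ being empty in the theorem) and that $k=0$ gives the full factor $\left(a_{n+1}\right)^{m}$. Once these edge cases are confirmed to match the theorem's product convention, the specialization is complete and no induction or further argument is required.
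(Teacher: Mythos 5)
Your proof is correct and matches the paper's intent exactly: the corollary is stated without proof as an immediate specialization of Theorem~\ref{Theorem 2.1}, and your substitution $a_{(m-j);n+1}=a_{n+1}$ together with counting the $m-k$ factors in the product is precisely the mechanical step required. The check of the boundary cases $k=m$ (empty product) and $k=0$ is a sensible extra precaution but nothing more is needed.
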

\begin{example}
{\em Consider that $m=2$, we have the two following cases: }
\begin{itemize}
\item {\em If all sequences are distinct, } 
$$
\sum_{N_2=q}^{n+1}{b_{N_2} \sum_{N_1=q}^{N_2}{a_{N_1}}}
-\sum_{N_2=q}^{n}{b_{N_2} \sum_{N_1=q}^{N_2}{a_{N_1}}}
=(b_{n+1})\sum_{N_1=q}^{n}{a_{N_1}}+(b_{n+1})(a_{n+1})
.$$
\item {\em If all sequences are the same,  }
$$
\sum_{N_2=q}^{n+1}{a_{N_2} \sum_{N_1=q}^{N_2}{a_{N_1}}}
-\sum_{N_2=q}^{n}{a_{N_2} \sum_{N_1=q}^{N_2}{a_{N_1}}}
=(a_{n+1})\sum_{N_1=q}^{n}{a_{N_1}}+(a_{n+1})^2
.$$
\end{itemize}
\end{example}
\begin{remark}
{\em Set $a_{(m);N}=\cdots = a_{(2);N}=1$, Theorem \ref{Theorem 2.1} becomes }
$$
\sum_{N_m=q}^{n+1}{\sum_{N_{m-1}=q}^{N_m}{\cdots \sum_{N_1=q}^{N_2}{a_{N_1}}}}
=\sum_{k=1}^{m}{\left( \sum_{N_k=q}^{n}{\sum_{N_{k-1}=q}^{N_k}{\cdots \sum_{N_1=q}^{N_2}{a_{N_1}}}}\right)}+a_{n+1}.
$$
\end{remark}
\subsection{Simple recurrent expression}
A recursive form of Theorem \ref{Theorem 2.1} can be obtained by expanding and factoring the theorem's expression.  
\begin{theorem} \label{Theorem 2.2}
For any $m,q,n \in \mathbb{N}$ where $n \geq q$ and for any set of sequences $a_{(1);N_1},\cdots,a_{(m);N_m}$ defined in the interval $[q,n+1]$, we have that 
\begin{dmath*}
\sum_{N_m=q}^{n+1}{\cdots \sum_{N_1=q}^{N_2}{a_{(m);N_m}\cdots a_{(1);N_1}}}
-\sum_{N_m=q}^{n}{\cdots \sum_{N_1=q}^{N_2}{a_{(m);N_m}\cdots a_{(1);N_1}}}
=a_{(m);n+1}\left\{a_{(m-1);n+1} \left[ \cdots a_{(2);n+1} \left( a_{(1);n+1}(1)+\sum_{N_1=q}^{n}{a_{(1);N_1}} \right)+\sum_{N_2=q}^{n}{ \sum_{N_1=q}^{N_2}{a_{(2);N_2} a_{(1);N_1}}}\right] +\sum_{N_{m-1}=q}^{n}{\cdots \sum_{N_1=q}^{N_2}{a_{(m-1);N_{m-1}}\cdots a_{(1);N_1}}} \right\}.
\end{dmath*}
Using the notation from Eq.~\eqref{eq1}, this theorem can be written as 
\begin{dmath*}
R_{m,q,n+1}=a_{(m);n+1}\left\{ a_{(m-1);n+1}\left[ \cdots a_{(2);n+1} \left( a_{(1);n+1} \left( R_{0,q,n} \right) + R_{1,q,n} \right) + R_{2,q,n} \right] + R_{m-1,q,n} \right\} + R_{m,q,n}
\end{dmath*}
where $R_{0,q,n}=1$. 
\end{theorem}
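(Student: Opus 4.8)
The plan is to obtain Theorem~\ref{Theorem 2.2} directly from the closed form of Theorem~\ref{Theorem 2.1} by a Horner-type factorization, which is exactly the ``expanding and factoring'' alluded to just before the statement. First I would recast the product in Theorem~\ref{Theorem 2.1} in a cleaner index. As $j$ ranges from $0$ to $m-k-1$, the factors $a_{(m-j);n+1}$ are precisely $a_{(m);n+1}, a_{(m-1);n+1}, \dots, a_{(k+1);n+1}$, so that
$$\prod_{j=0}^{m-k-1} a_{(m-j);n+1} = \prod_{\ell=k+1}^{m} a_{(\ell);n+1}.$$
With this reindexing, Theorem~\ref{Theorem 2.1} reads
$$R_{m,q,n+1}=\sum_{k=0}^{m}\left(\prod_{\ell=k+1}^{m} a_{(\ell);n+1}\right)R_{k,q,n},$$
where the $k=m$ summand carries an empty product equal to $1$.

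The heart of the argument is to show that this sum equals the nested expression in the statement. To this end I would introduce the partial expressions defined recursively by $T_0 = R_{0,q,n}$ and $T_k = a_{(k);n+1}\,T_{k-1} + R_{k,q,n}$ for $1 \le k \le m$. Unwinding this recursion from the inside out reproduces verbatim the bracketed tower in the theorem, with $T_m$ being the entire right-hand side. It then suffices to prove, by induction on $k$, that
$$T_k = \sum_{i=0}^{k}\left(\prod_{\ell=i+1}^{k} a_{(\ell);n+1}\right)R_{i,q,n}.$$
The base case $k=0$ holds since the right side is the single term $R_{0,q,n}$ with empty product. For the step, I would substitute the hypothesis for $T_{k-1}$ into the recursion, absorb the extra factor $a_{(k);n+1}$ to extend each product up to $\ell=k$, and identify the trailing $R_{k,q,n}$ as the new $i=k$ summand (again with empty product). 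Taking $k=m$ and comparing with the reindexed Theorem~\ref{Theorem 2.1} yields $T_m = R_{m,q,n+1}$, which is the asserted identity.

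The only real difficulty is bookkeeping: one must keep the product ranges and the empty-product conventions consistent through both the reindexing and the induction, paying attention to the boundary summand $k=m$ and to the innermost value $R_{0,q,n}=1$. No analytic content is involved, so once the Horner recursion is set up the verification is mechanical. As an alternative, one could bypass Theorem~\ref{Theorem 2.1} altogether and establish the nested identity by a direct induction on $m$ using Lemma~\ref{Lemma 2.1}, but factoring the already-proved closed form is the shorter route.
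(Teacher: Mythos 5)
Your proof is correct, but it takes a different route from the paper. You derive Theorem~\ref{Theorem 2.2} from the already-proved closed form of Theorem~\ref{Theorem 2.1}, reindexing the product as $\prod_{\ell=k+1}^{m}a_{(\ell);n+1}$ and then showing by a Horner-type induction on $k$ that the nested tower $T_k=a_{(k);n+1}T_{k-1}+R_{k,q,n}$ reproduces the partial sums $\sum_{i=0}^{k}\bigl(\prod_{\ell=i+1}^{k}a_{(\ell);n+1}\bigr)R_{i,q,n}$; taking $k=m$ identifies the tower with $R_{m,q,n+1}$. The paper instead proves the nested identity by a fresh induction on $m$: the base case $m=1$ is checked directly, and the induction step applies Lemma~\ref{Lemma 2.1} in the form $R_{m+1,q,n+1}=a_{(m+1);n+1}R_{m,q,n+1}+R_{m+1,q,n}$ and substitutes the hypothesis for $R_{m,q,n+1}$ inside the braces --- this is precisely the ``direct induction on $m$ using Lemma~\ref{Lemma 2.1}'' you mention as an alternative at the end. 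Interestingly, your main route is the one the paper's own prose advertises (``expanding and factoring the theorem's expression'') even though its written proof does not follow it. Your factorization makes the relationship between the summed form and the nested form explicit and reuses prior work; the paper's induction is marginally more self-contained and is structurally identical to its proof of Theorem~\ref{Theorem 2.1}, just without collapsing the nesting. Both are sound; your bookkeeping of empty products and of the boundary summand $k=m$ is exactly the care the argument requires.
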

\begin{proof}
$ \\ $
1. Base Case: verify true for $m=1$. 
$$
a_{(1);n+1}(R_{0,q,n})+R_{1,q,n}
=a_{(1);n+1}(1)+\sum_{N_1=q}^{n}{a_{(1);N_1}}
=\sum_{N_1=q}^{n+1}{a_{(1);N_1}}
=R_{1,q,n+1}
.$$
2. Induction Hypothesis: assume the statement is true until $m$.
\begin{dmath*}
\Scale[0.9]{
R_{m,q,n+1}=a_{(m);n+1}\left\{ a_{(m-1);n+1}\left[ \cdots a_{(2);n+1} \left( a_{(1);n+1} \left( R_{0,q,n} \right) + R_{1,q,n} \right) + R_{2,q,n} \right] + R_{m-1,q,n} \right\} + R_{m,q,n}.}
\end{dmath*}
3. Induction Step: we will show that this statement is true for $(m+1)$. \\
We have to show the following statement to be true: 
\begin{dmath*}
\Scale[0.9]{
R_{m+1,q,n+1}=a_{(m+1);n+1}\left\{ a_{(m);n+1}\left[ \cdots a_{(2);n+1} \left( a_{(1);n+1} \left( R_{0,q,n} \right) + R_{1,q,n} \right) + R_{2,q,n} \right] + R_{m,q,n} \right\} + R_{m+1,q,n}.}
\end{dmath*}
$ \\ $ 
From Lemma \ref{Lemma 2.1}, 
$$R_{m+1,q,n+1}=a_{(m+1);n+1} R_{m,q,n+1}+R_{m+1,q,n}.$$ 
By applying the induction hypothesis, 
\begin{dmath*}
\Scale[0.9]{
R_{m+1,q,n+1}=a_{(m+1);n+1}\left\{ a_{(m);n+1}\left[ \cdots a_{(2);n+1} \left( a_{(1);n+1} \left( R_{0,q,n} \right) + R_{1,q,n} \right) + R_{2,q,n} \right] + R_{m,q,n} \right\} + R_{m+1,q,n}. }
\end{dmath*} 
Hence, the theorem is proven by induction. 
\end{proof}
\begin{corollary} \label{Corollary 2.3}
If all sequences are the same, Theorem \ref{Theorem 2.2} will be reduced to the following form, 
\begin{dmath*}
\sum_{N_m=q}^{n+1}{\cdots \sum_{N_1=q}^{N_2}{a_{N_m}\cdots a_{N_1}}}
-\sum_{N_m=q}^{n}{\cdots \sum_{N_1=q}^{N_2}{a_{N_m}\cdots a_{N_1}}}
=a_{n+1}\left\{a_{n+1} \left[ \cdots a_{n+1} \left( a_{n+1}(1)+\sum_{N_1=q}^{n}{a_{N_1}} \right)+\sum_{N_2=q}^{n}{ \sum_{N_1=q}^{N_2}{a_{N_2} a_{N_1}}}\right] +\sum_{N_{m-1}=q}^{n}{\cdots \sum_{N_1=q}^{N_2}{a_{N_{m-1}}\cdots a_{N_1}}} \right\}.
\end{dmath*}
Using the notation from Eq.~\eqref{eq2}, this theorem can be written as 
\begin{dmath*}
\hat{R}_{m,q,n+1}=a_{n+1}\left\{ a_{n+1}\left[ \cdots a_{n+1} \left( a_{n+1} \left( \hat{R}_{0,q,n} \right) + \hat{R}_{1,q,n} \right) + \hat{R}_{2,q,n} \right] + \hat{R}_{m-1,q,n} \right\} + \hat{R}_{m,q,n}
\end{dmath*}
where $\hat R_{0,q,n}=1$.
\end{corollary}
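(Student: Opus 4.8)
The plan is to obtain Corollary~\ref{Corollary 2.3} as a direct specialization of Theorem~\ref{Theorem 2.2} rather than by running a fresh induction. Since the corollary concerns the case where all sequences coincide, I would begin by imposing the substitution $a_{(1);N}=a_{(2);N}=\cdots=a_{(m);N}=a_N$ for every index $N$ in the interval $[q,n+1]$, and then track how each ingredient of Theorem~\ref{Theorem 2.2} transforms under this identification.

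Under this substitution, the definition in Eq.~\eqref{eq1} of the general recurrent sum $R_{k,q,n}$ collapses, by Eq.~\eqref{eq2}, to the single-sequence recurrent sum $\hat{R}_{k,q,n}$ for every order $k$ from $0$ to $m$; in particular $R_{0,q,n}=\hat{R}_{0,q,n}=1$. Simultaneously, each coefficient $a_{(i);n+1}$ appearing in the nested bracket of Theorem~\ref{Theorem 2.2} becomes simply $a_{n+1}$, independent of $i$. Substituting these two observations into the statement of Theorem~\ref{Theorem 2.2} reproduces the claimed nested expression verbatim, with every $R$ replaced by $\hat{R}$ and every $a_{(i);n+1}$ replaced by $a_{n+1}$. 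The first displayed equation, written in terms of explicit multiple sums, follows identically by applying the same substitution to the multiple-sum form of Theorem~\ref{Theorem 2.2}.

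There is essentially no analytic obstacle here, since the corollary inherits its validity directly from the theorem; the only point requiring minor care is bookkeeping, namely confirming that the innermost factor $a_{(1);n+1}(R_{0,q,n})+R_{1,q,n}$ and the outermost added term $R_{m,q,n}$ both specialize correctly, so that the recursive structure of the bracket is preserved rather than altered under the collapse of the sequence indices. Should a self-contained argument be preferred, the same statement can alternatively be established by an induction on $m$ that mirrors the proof of Theorem~\ref{Theorem 2.2} line for line, invoking the single-sequence form of Lemma~\ref{Lemma 2.1}, namely $\hat{R}_{m+1,q,n+1}=a_{n+1}\hat{R}_{m,q,n+1}+\hat{R}_{m+1,q,n}$, at the induction step.
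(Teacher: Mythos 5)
Your proposal is correct and matches the paper's (implicit) treatment: the corollary is obtained simply by setting $a_{(1);N}=\cdots=a_{(m);N}=a_N$ in Theorem \ref{Theorem 2.2}, under which every $R_{k,q,n}$ becomes $\hat{R}_{k,q,n}$ and every coefficient $a_{(i);n+1}$ becomes $a_{n+1}$. The paper offers no separate proof precisely because this direct specialization is all that is needed.
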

\begin{example}
{\em Consider that $m=2$, we have the two following cases: }
\begin{itemize}
\item {\em If all sequences are distinct,  }
$$
\sum_{N_2=q}^{n+1}{b_{N_2} \sum_{N_1=q}^{N_2}{a_{N_1}}}
-\sum_{N_2=q}^{n}{b_{N_2} \sum_{N_1=q}^{N_2}{a_{N_1}}}
=(b_{n+1}) \left\{a_{n+1}(1)+ \sum_{N_1=q}^{n}{a_{N_1}} \right\}.
$$
\item {\em If all sequences are the same,  }
$$
\sum_{N_2=q}^{n+1}{a_{N_2} \sum_{N_1=q}^{N_2}{a_{N_1}}}
-\sum_{N_2=q}^{n}{a_{N_2} \sum_{N_1=q}^{N_2}{a_{N_1}}}
=(a_{n+1}) \left\{a_{n+1}(1)+\sum_{N_1=q}^{n}{a_{N_1}} \right\}.
$$
\end{itemize}
\end{example}
\subsection{General expression}
The variation of a recurrent sum can also be expressed in terms of only a certain range of lower order recurrent sums. In other words, $R_{m,q,n+1}$ can be expressed in terms of $R_{m,q,n}$ and of recurrent sums of order going only from $p$ to $(m-1)$. To do so, we develop the following theorem.  
\begin{theorem} \label{Theorem 2.3}
For any $m,q,n \in \mathbb{N}$ where $n \geq q$, for any $p \in [0,m]$, and for any set of sequences $a_{(1);N_1},\cdots,a_{(m);N_m}$ defined in the interval $[q,n+1]$, we have that 
\begin{dmath*}
\sum_{N_m=q}^{n+1}{\cdots \sum_{N_1=q}^{N_2}{a_{(m);N_m}\cdots a_{(1);N_1}}}
=\sum_{k=p+1}^{m}{\left(\prod_{j=0}^{m-k-1}{a_{(m-j);n+1}}\right)\left(\sum_{N_k=q}^{n}{\cdots \sum_{N_1=q}^{N_2}{a_{(k);N_k}\cdots a_{(1);N_1}}}\right)}
+\left(\prod_{j=0}^{m-p-1}{a_{(m-j);n+1}}\right)\left(\sum_{N_p=q}^{n+1}{\cdots \sum_{N_1=q}^{N_2}{a_{(p);N_p}\cdots a_{(1);N_1}}}\right).
\end{dmath*}
Using the notation from Eq.~\eqref{eq1}, this theorem can be written as 
$$
R_{m,q,n+1}
=\sum_{k=p+1}^{m}{\left(\prod_{j=0}^{m-k-1}{a_{(m-j);n+1}}\right)R_{k,q,n}}
+\left(\prod_{j=0}^{m-p-1}{a_{(m-j);n+1}}\right)R_{p,q,n+1}.
$$
\end{theorem}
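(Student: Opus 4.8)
The plan is to prove the identity by downward induction on $p$, treating $m$, $q$, and $n$ as fixed, and to use Lemma~\ref{Lemma 2.1} as the engine that converts the formula for one value of $p$ into the formula for the next smaller value.

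For the base case I would take $p=m$. Here the sum $\sum_{k=p+1}^{m}$ is empty and the product $\prod_{j=0}^{m-p-1}=\prod_{j=0}^{-1}$ is empty (hence equal to $1$), so the right-hand side collapses to $R_{m,q,n+1}$, which is exactly the left-hand side. This step needs only the conventions that an empty sum is $0$ and an empty product is $1$.

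For the inductive step I would assume the identity holds for $p+1$ (with $0 \le p \le m-1$) and deduce it for $p$. The only difference between the two right-hand sides is that the trailing term $\left(\prod_{j=0}^{m-p-2}{a_{(m-j);n+1}}\right)R_{p+1,q,n+1}$ of the $(p+1)$-version must be rewritten. I would invoke Lemma~\ref{Lemma 2.1} in the form $R_{p+1,q,n+1}=a_{(p+1);n+1}R_{p,q,n+1}+R_{p+1,q,n}$ and substitute it into that single term. This splits it into a piece $\left(\prod_{j=0}^{m-p-2}{a_{(m-j);n+1}}\right)R_{p+1,q,n}$, which joins the running sum and lowers its starting index from $k=p+2$ to $k=p+1$, and a piece $\left(\prod_{j=0}^{m-p-2}{a_{(m-j);n+1}}\right)a_{(p+1);n+1}R_{p,q,n+1}$, which becomes the new trailing term.

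The one point that needs genuine care --- and which I expect to be the main (though modest) obstacle --- is the product bookkeeping: I must check that absorbing the extra factor $a_{(p+1);n+1}$ promotes $\prod_{j=0}^{m-p-2}{a_{(m-j);n+1}}$ to $\prod_{j=0}^{m-p-1}{a_{(m-j);n+1}}$. This holds because the newly added index $j=m-p-1$ contributes precisely $a_{(m-(m-p-1));n+1}=a_{(p+1);n+1}$; once this is verified, the trailing term matches the required $\left(\prod_{j=0}^{m-p-1}{a_{(m-j);n+1}}\right)R_{p,q,n+1}$ and the induction closes. As a sanity check, the case $p=0$ recovers Theorem~\ref{Theorem 2.1} (using $R_{0,q,n+1}=1$), so the family interpolates between the trivial identity at $p=m$ and the full variation formula at $p=0$. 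An alternative, non-inductive route would be to start directly from Theorem~\ref{Theorem 2.1}, split its sum at $k=p$, factor the common product $\prod_{j=0}^{m-p-1}{a_{(m-j);n+1}}$ out of the low-order tail, and recognize --- after the reindexing $i=m-j$ --- that the remaining factor $\sum_{k=0}^{p}\left(\prod_{i=k+1}^{p}{a_{(i);n+1}}\right)R_{k,q,n}$ is again $R_{p,q,n+1}$ by Theorem~\ref{Theorem 2.1}; this trades the induction for the same product-index bookkeeping.
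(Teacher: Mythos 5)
Your primary argument --- downward induction on $p$ from the trivial case $p=m$, using Lemma~\ref{Lemma 2.1} in the form $R_{p+1,q,n+1}=a_{(p+1);n+1}R_{p,q,n+1}+R_{p+1,q,n}$ to split the trailing term at each step --- is correct, and the product bookkeeping you flag does check out: the new index $j=m-p-1$ contributes exactly $a_{(p+1);n+1}$, and the piece $\left(\prod_{j=0}^{m-p-2}{a_{(m-j);n+1}}\right)R_{p+1,q,n}$ is precisely the $k=p+1$ term needed to extend the sum. This is, however, not the route the paper takes. The paper proves the theorem non-inductively: it applies Theorem~\ref{Theorem 2.1} to expand $R_{m,q,n+1}$ fully, splits the sum at $k=p$, factors $\prod_{j=0}^{m-p-1}{a_{(m-j);n+1}}$ out of the low-order tail, and then recognizes the remaining sum $\sum_{k=0}^{p}\left(\prod_{j=m-p}^{m-k-1}{a_{(m-j);n+1}}\right)R_{k,q,n}$ as $R_{p,q,n+1}$ by a second application of Theorem~\ref{Theorem 2.1} with $m$ replaced by $p$ --- exactly the ``alternative, non-inductive route'' you sketch in your last sentence. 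The two proofs are close in content (both ultimately rest on Theorem~\ref{Theorem 2.1}, which is itself proved from Lemma~\ref{Lemma 2.1} by induction), but your induction avoids the product reindexing $\prod_{j=0}^{p-k-1}{a_{(p-j);n+1}}=\prod_{j=m-p}^{m-k-1}{a_{(m-j);n+1}}$ that the paper must justify, at the cost of running a fresh induction; the paper's version is shorter once Theorem~\ref{Theorem 2.1} is in hand and makes the relationship between the two theorems more transparent.
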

\begin{proof}
By applying Theorem \ref{Theorem 2.1}, 
\begin{equation*}
\begin{split}
R_{m,q,n+1}
&=\sum_{k=0}^{m}{\left( \prod_{j=0}^{m-k-1}{a_{(m-j);n+1}} \right)R_{k,q,n}}\\
&=\sum_{k=p+1}^{m}{\left( \prod_{j=0}^{m-k-1}{a_{(m-j);n+1}} \right)R_{k,q,n}}
+\sum_{k=0}^{p}{\left( \prod_{j=0}^{m-k-1}{a_{(m-j);n+1}} \right)R_{k,q,n}}\\
&=\sum_{k=p+1}^{m}{\left( \prod_{j=0}^{m-k-1}{a_{(m-j);n+1}} \right)R_{k,q,n}}
+\left( \prod_{j=0}^{m-p-1}{a_{(m-j);n+1}} \right) \sum_{k=0}^{p}{\left( \prod_{j=m-p}^{m-k-1}{a_{(m-j);n+1}} \right)R_{k,q,n}}.
\end{split}
\end{equation*} 
From Theorem \ref{Theorem 2.1}, with $m$ substituted by $p$, we have  
$$
R_{p,q,n+1}
=\sum_{k=0}^{p}{\left(\prod_{j=0}^{p-k-1}{a_{(p-j);n+1}}\right)R_{k,q,n}}
=\sum_{k=0}^{p}{\left(\prod_{j=m-p}^{m-k-1}{a_{(m-j);n+1}}\right)R_{k,q,n}}
.$$
Hence, by substituting, we get 
$$
R_{m,q,n+1}
=\sum_{k=p+1}^{m}{\left( \prod_{j=0}^{m-k-1}{a_{(m-j);n+1}} \right)R_{k,q,n}}
+\left( \prod_{j=0}^{m-p-1}{a_{(m-j);n+1}} \right)R_{p,q,n+1}
.$$
\end{proof}
\begin{corollary} \label{Corollary 2.4}
If all sequences are the same, Theorem \ref{Theorem 2.3} will be reduced to the following form,  
\begin{dmath*}
\sum_{N_m=q}^{n+1}{\cdots \sum_{N_1=q}^{N_2}{a_{N_m}\cdots a_{N_1}}}
=\sum_{k=p+1}^{m}{\left(a_{n+1}\right)^{m-k}\left(\sum_{N_k=q}^{n}{\cdots \sum_{N_1=q}^{N_2}{a_{N_k}\cdots a_{N_1}}}\right)}
+\left(a_{n+1}\right)^{m-p}\left(\sum_{N_p=q}^{n+1}{\cdots \sum_{N_1=q}^{N_2}{a_{N_p}\cdots a_{N_1}}}\right).
\end{dmath*}
Using the notation from Eq.~\eqref{eq2}, this theorem can be written as 
$$
\hat{R}_{m,q,n+1}
=\sum_{k=p+1}^{m}{\left(a_{n+1}\right)^{m-k}\hat{R}_{k,q,n}}
+\left(a_{n+1}\right)^{m-p}\hat{R}_{p,q,n+1}.
$$
\end{corollary}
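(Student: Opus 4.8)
The plan is to obtain Corollary~\ref{Corollary 2.4} as a direct specialization of Theorem~\ref{Theorem 2.3}, since the corollary is nothing more than the statement of that theorem in the case where all the sequences coincide. First I would set $a_{(i);N_i}=a_{N_i}$ for every index $i\in\{1,\dots,m\}$. Under this identification, each general recurrent sum $R_{k,q,n}$ collapses to the single-sequence recurrent sum $\hat{R}_{k,q,n}$ by the defining equation~\eqref{eq2}, and likewise $R_{p,q,n+1}$ becomes $\hat{R}_{p,q,n+1}$. This reduces the task to simplifying the two products of leading factors that appear in Theorem~\ref{Theorem 2.3}.

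The key step is the evaluation of these products. In the product $\prod_{j=0}^{m-k-1}{a_{(m-j);n+1}}$, once all sequences are taken to be the same, every factor equals $a_{n+1}$, so the product reduces to a power of $a_{n+1}$. I would count the factors carefully: the index $j$ runs from $0$ to $m-k-1$, which gives exactly $(m-k-1)-0+1=m-k$ terms, whence $\prod_{j=0}^{m-k-1}{a_{(m-j);n+1}}=\left(a_{n+1}\right)^{m-k}$. The same counting argument applied to the second product, whose index runs from $0$ to $m-p-1$, yields $m-p$ factors and hence $\left(a_{n+1}\right)^{m-p}$.

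Substituting $\hat{R}_{k,q,n}$ for $R_{k,q,n}$, $\hat{R}_{p,q,n+1}$ for $R_{p,q,n+1}$, and the two simplified powers into the statement of Theorem~\ref{Theorem 2.3} gives
$$
\hat{R}_{m,q,n+1}
=\sum_{k=p+1}^{m}{\left(a_{n+1}\right)^{m-k}\hat{R}_{k,q,n}}
+\left(a_{n+1}\right)^{m-p}\hat{R}_{p,q,n+1},
$$
which is precisely the asserted form, and the expanded summation version follows by writing each $\hat{R}$ out explicitly using~\eqref{eq2}. There is no genuine obstacle in this argument; the only point requiring care is the bookkeeping of the number of factors in each product so that the exponents $m-k$ and $m-p$ come out correctly, which is why I would state the factor count explicitly rather than leave it implicit.
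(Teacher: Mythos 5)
Your proposal is correct and matches the paper's (implicit) treatment: the corollary is stated as a direct specialization of Theorem \ref{Theorem 2.3} with all sequences identified, and your careful count of the $m-k$ and $m-p$ factors in the two products is exactly the bookkeeping that justifies the exponents. Nothing further is needed.
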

\begin{example}
{\em For $p=2$ and if the sequences are the same: }
\begin{dmath*}
\sum_{N_m=q}^{n+1}{\cdots \sum_{N_1=q}^{N_2}{a_{N_m}\cdots a_{N_1}}}
=\sum_{k=3}^{m}{\left(a_{n+1}\right)^{m-k}\left(\sum_{N_k=q}^{n}{\cdots \sum_{N_1=q}^{N_2}{a_{N_k}\cdots a_{N_1}}}\right)}
+\left(a_{n+1}\right)^{m-2}\left(\sum_{N_2=q}^{n+1}{\sum_{N_1=q}^{N_2}{a_{N_2}a_{N_1}}}\right).
\end{dmath*}
\end{example}
\begin{example}
{\em For $p=m-2$ and if the sequences are the same: }
\begin{dmath*}
\sum_{N_m=q}^{n+1}{\cdots \sum_{N_1=q}^{N_2}{a_{N_m}\cdots a_{N_1}}}
=\left(\sum_{N_m=q}^{n}{\cdots \sum_{N_1=q}^{N_2}{a_{N_m}\cdots a_{N_1}}}\right)
+\left(a_{n+1}\right)\left(\sum_{N_{m-1}=q}^{n}{\cdots \sum_{N_1=q}^{N_2}{a_{N_{m-1}}\cdots a_{N_1}}}\right)
+\left(a_{n+1}\right)^{2}\left(\sum_{N_{m-2}=q}^{n+1}{\cdots \sum_{N_1=q}^{N_2}{a_{N_{m-2}}\cdots a_{N_1}}}\right).
\end{dmath*}
\end{example}
\begin{remark} \label{Corollary 2.5}
{\em Set $a_{(m);N}=\cdots = a_{(2);N}=1$, Theorem \ref{Theorem 2.3} becomes }
$$
\sum_{N_m=q}^{n+1}{\sum_{N_{m-1}=q}^{N_m}{\cdots \sum_{N_1=q}^{N_2}{a_{N_1}}}}
=\sum_{k=p+1}^{m}{\left( \sum_{N_k=q}^{n}{\sum_{N_{k-1}=q}^{N_k}{\cdots \sum_{N_1=q}^{N_2}{a_{N_1}}}}\right)}
+\sum_{N_p=q}^{n+1}{\sum_{N_{p-1}=q}^{N_p}{\cdots \sum_{N_1=q}^{N_2}{a_{N_1}}}}
.$$
\end{remark}
\subsection{General recurrent expression}
Similarly, the theorem introduced in the previous section can be reformulated in a recursive form by expanding and factoring the expression of Theorem \ref{Theorem 2.3} to obtain the following expression.  
\begin{theorem} \label{Theorem 2.4}
For any $m,q,n \in \mathbb{N}$ where $n \geq q$, for any $p \in [0,m]$, and for any set of sequences $a_{(1);N_1},\cdots,a_{(m);N_m}$ defined in the interval $[q,n+1]$, we have that 
\begin{dmath*}
\Scale[0.87]{
R_{m,q,n+1}=a_{(m);n+1}\left\{ a_{(m-1);n+1}\left[ \cdots a_{(p+2);n+1} \left( a_{(p+1);n+1} \left( R_{p,q,n+1} \right) + R_{p+1,q,n} \right) + R_{p+2,q,n} \right] + R_{m-1,q,n} \right\} + R_{m,q,n}. }
\end{dmath*}
\end{theorem}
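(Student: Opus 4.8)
The plan is to prove this by induction on $m$, with $p$ held fixed, using Lemma~\ref{Lemma 2.1} as the engine, in exactly the spirit in which Theorem~\ref{Theorem 2.2} was obtained from Lemma~\ref{Lemma 2.1}. The guiding observation is that the claimed identity is simply the fully factored (Horner-type) form of the explicit sum in Theorem~\ref{Theorem 2.3}: reading the right-hand side from the inside out, the multipliers $a_{(p+1);n+1},a_{(p+2);n+1},\ldots,a_{(m);n+1}$ are peeled off one layer at a time, and at each layer exactly one summand $R_{k,q,n}$ (for $p+1\le k\le m$) is released unmultiplied, while the innermost slot holds $R_{p,q,n+1}$. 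Thus the two formulations carry identical content, and the real task is only to confirm that the nested bracketing reproduces the telescoping products of Theorem~\ref{Theorem 2.3}.

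For the base case I would take $m=p+1$, where the nesting collapses to a single layer and the claim reads
$$R_{p+1,q,n+1}=a_{(p+1);n+1}\left(R_{p,q,n+1}\right)+R_{p+1,q,n},$$
which is precisely Lemma~\ref{Lemma 2.1} with its index $m$ replaced by $p$. (The degenerate case $m=p$ reduces to the tautology $R_{p,q,n+1}=R_{p,q,n+1}$.)

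For the induction step, assuming the statement for $m$, I would invoke Lemma~\ref{Lemma 2.1} to write $R_{m+1,q,n+1}=a_{(m+1);n+1}R_{m,q,n+1}+R_{m+1,q,n}$ and then substitute the induction hypothesis for $R_{m,q,n+1}$. This wraps the entire $m$-level nested expression inside one further factor $a_{(m+1);n+1}\{\,\cdots+R_{m,q,n}\,\}$ and appends $+R_{m+1,q,n}$, which is exactly the $(m+1)$-level form of the claim. As an alternative route that matches the ``expanding and factoring'' description, one could bypass induction and derive the identity directly from Theorem~\ref{Theorem 2.3}: factor $a_{(m);n+1}$ out of every term except $R_{m,q,n}$, then factor $a_{(m-1);n+1}$ out of the remaining bracket except $R_{m-1,q,n}$, and continue down to level $p$, where the leftover inner expression is $a_{(p+1);n+1}R_{p,q,n+1}+R_{p+1,q,n}$.

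The step I expect to demand the most care is bookkeeping rather than conceptual: verifying that each successive factoring peels off exactly one multiplier $a_{(m-j);n+1}$ and leaves exactly one summand $R_{k,q,n}$ outside the next bracket, so that the products $\prod_{j}a_{(m-j);n+1}$ of Theorem~\ref{Theorem 2.3} reassemble into the single nested chain claimed here. Tracking the delimiter levels and the index ranges through the repeated factoring is the only place where an error could plausibly arise.
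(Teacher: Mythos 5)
Your proof is correct, but it takes a different (more self-contained) route than the paper. The paper's proof is a two-line observation: it invokes Theorem~\ref{Theorem 2.2} twice --- once at level $m$ and once with $m$ replaced by $p$ --- and notes that the innermost portion of the level-$m$ nesting, namely $a_{(p);n+1}\left[\cdots + R_{p-1,q,n}\right]+R_{p,q,n}$, is by the second application exactly $R_{p,q,n+1}$, so the collapsed formula follows by substitution. You instead re-run the induction on $m$ directly from Lemma~\ref{Lemma 2.1}, with the base case shifted from $m=1$ up to $m=p+1$ (where the claim is Lemma~\ref{Lemma 2.1} itself); the inductive step is the same wrapping move used in the paper's proof of Theorem~\ref{Theorem 2.2}. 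Both arguments are sound. The paper's version buys brevity by reusing Theorem~\ref{Theorem 2.2} as a black box; yours buys independence from Theorem~\ref{Theorem 2.2} at the cost of repeating its induction, and your secondary route via factoring Theorem~\ref{Theorem 2.3} matches the paper's informal description of where the result comes from, though the paper does not actually carry out that factoring. Your base case and the degenerate case $m=p$ are handled correctly.
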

\begin{proof}
From Theorem \ref{Theorem 2.2}, with $m$ substituted by $p$, we have 
\begin{dmath*}
R_{p,q,n+1}=a_{(p);n+1}\left\{ a_{(p-1);n+1}\left[ \cdots a_{(2);n+1} \left( a_{(1);n+1} \left( R_{0,q,n} \right) + R_{1,q,n} \right) + R_{2,q,n} \right] + R_{p-1,q,n} \right\} + R_{p,q,n}
\end{dmath*}
where $R_{0,q,n}=1$. \\
Substituting into the expression of Theorem \ref{Theorem 2.2}, the inner part becomes $R_{p,q,n+1}$ and we get the desired formula.  
\end{proof}
\begin{corollary} \label{Corollary 2.6}
If all sequences are the same, Theorem \ref{Theorem 2.4} will be reduced to the following form, \begin{dmath*}
\hat{R}_{m,q,n+1}=a_{n+1}\left\{ a_{n+1}\left[ \cdots a_{n+1} \left( a_{n+1} \left( \hat{R}_{p,q,n+1} \right) + \hat{R}_{p+1,q,n} \right) + \hat{R}_{p+2,q,n} \right] + \hat{R}_{m-1,q,n} \right\} + \hat{R}_{m,q,n}.
\end{dmath*}
\end{corollary}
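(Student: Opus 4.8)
The plan is to obtain Corollary~\ref{Corollary 2.6} as an immediate specialization of Theorem~\ref{Theorem 2.4}, in exactly the same manner that the earlier corollaries (Corollary~\ref{Corollary 2.1}, Corollary~\ref{Corollary 2.3}, and Corollary~\ref{Corollary 2.4}) were derived from their parent theorems. First I would impose the hypothesis that all sequences coincide, i.e.\ $a_{(1);N}=a_{(2);N}=\cdots=a_{(m);N}=a_N$ for every index $N$ in the interval $[q,n+1]$. Under this hypothesis, each evaluated factor $a_{(j);n+1}$ that appears in Theorem~\ref{Theorem 2.4}, for $j=p+1,\ldots,m$, collapses to the single value $a_{n+1}$, and each general recurrent sum $R_{k,q,\cdot}$ becomes the equal-sequence recurrent sum $\hat{R}_{k,q,\cdot}$ of Eq.~\eqref{eq2}.

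Next I would carry out this substitution throughout the nested expression of Theorem~\ref{Theorem 2.4}. The innermost bracket $a_{(p+1);n+1}(R_{p,q,n+1})+R_{p+1,q,n}$ becomes $a_{n+1}(\hat{R}_{p,q,n+1})+\hat{R}_{p+1,q,n}$; each successive enclosing layer multiplies by a factor $a_{n+1}$ and adds the next term $\hat{R}_{k,q,n}$, thereby reproducing exactly the claimed nested form. Finally, the outermost additive term $R_{m,q,n}$ becomes $\hat{R}_{m,q,n}$, which completes the identity.

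Since Theorem~\ref{Theorem 2.4} is established for the general distinct-sequence case, and the equal-sequence case is merely a particular instance of it, no separate induction is required here. The only point that warrants care — and the closest thing to an obstacle — is the bookkeeping: verifying that the chain of factors $a_{(m);n+1},a_{(m-1);n+1},\ldots,a_{(p+1);n+1}$ runs over precisely $m-p$ indices so that they all uniformly reduce to $a_{n+1}$, and confirming that the subscripts of the $\hat{R}_{k,q,n}$ terms in each layer match those of the corresponding $R_{k,q,n}$ terms in Theorem~\ref{Theorem 2.4}. Once this indexing is checked, the stated formula follows directly.
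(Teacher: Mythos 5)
Your proposal is correct and matches the paper's (implicit) derivation exactly: Corollary \ref{Corollary 2.6} is obtained simply by specializing Theorem \ref{Theorem 2.4} to identical sequences, replacing each $a_{(j);n+1}$ by $a_{n+1}$ and each $R_{k,q,\cdot}$ by $\hat{R}_{k,q,\cdot}$. The indexing check you mention is the only substantive point, and you handle it correctly.
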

\begin{example}
{\em For $p=1$ and if the sequences are the same: }
\begin{dmath*}
\sum_{N_m=q}^{n+1}{\cdots \sum_{N_1=q}^{N_2}{a_{N_m}\cdots a_{N_1}}}
-\sum_{N_m=q}^{n}{\cdots \sum_{N_1=q}^{N_2}{a_{N_m}\cdots a_{N_1}}}
=a_{n+1}\left\{a_{n+1} \left[ \cdots a_{n+1} \left(\sum_{N_1=q}^{n+1}{a_{N_1}} \right)+\sum_{N_2=q}^{n}{ \sum_{N_1=q}^{N_2}{a_{N_2} a_{N_1}}}\right] +\sum_{N_{m-1}=q}^{n}{\cdots \sum_{N_1=q}^{N_2}{a_{N_{m-1}}\cdots a_{N_1}}} \right\}.
\end{dmath*}
\end{example}
\begin{example}
{\em For $p=m-2$ and if the sequences are the same: }
\begin{dmath*}
\sum_{N_m=q}^{n+1}{\cdots \sum_{N_1=q}^{N_2}{a_{N_m}\cdots a_{N_1}}}
-\sum_{N_m=q}^{n}{\cdots \sum_{N_1=q}^{N_2}{a_{N_m}\cdots a_{N_1}}}
=a_{n+1}\left\{a_{n+1} \left[ \sum_{N_{m-2}=q}^{n+1}{\cdots \sum_{N_1=q}^{N_2}{a_{N_{m-2}}\cdots a_{N_1}}}\right] +\sum_{N_{m-1}=q}^{n}{\cdots \sum_{N_1=q}^{N_2}{a_{N_{m-1}}\cdots a_{N_1}}} \right\}.
\end{dmath*}
\end{example}
\section{Inversion Formulas} \label{Inversion Formulas}
In this section, we will develop formulas to interchange the order of summation in a recurrent sum. 
\subsection{Particular case (for 2 sequences)}
We start by proving the inversion formula with $2$ sequences which is required in order to prove the more general inversion formula with $m$ sequences. 
\begin{theorem} \label{Theorem 3.1}
For $m,q,n \in \mathbb{N}$ where $n \geq q$ and for any 2 sequences $a_{N_1}$ and $b_{N_2}$ defined in the interval $[q,n]$, we have that 
$$
\sum_{N_2=q}^{n}{b_{N_2}\sum_{N_1=q}^{N_2}{a_{N_1}}}
=\sum_{N_1=q}^{n}{a_{N_1}\sum_{N_2=N_1}^{n}{b_{N_2}}}
.$$
\end{theorem}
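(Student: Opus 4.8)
The plan is to recognize both sides as two different ways of summing the single quantity $a_{N_1}b_{N_2}$ over the same triangular index set $T=\{(N_1,N_2):q\le N_1\le N_2\le n\}$, and then to justify the interchange of the order of summation directly. On the left-hand side the outer variable is $N_2$, ranging over $[q,n]$, and for each fixed $N_2$ the inner variable $N_1$ is constrained by $q\le N_1\le N_2$; since $b_{N_2}$ does not depend on $N_1$, I can regard it as a constant factor multiplying the inner sum of $a_{N_1}$. To recover the right-hand side I would make $N_1$ the outer variable. The pair $(N_1,N_2)$ lies in $T$ precisely when $q\le N_1\le n$ together with $N_1\le N_2\le n$, so after the swap $N_1$ ranges over $[q,n]$ and, for each fixed $N_1$, $N_2$ ranges over $[N_1,n]$. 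Factoring $a_{N_1}$ (which is independent of $N_2$) out of the inner sum then produces exactly $\sum_{N_1=q}^{n}a_{N_1}\sum_{N_2=N_1}^{n}b_{N_2}$.

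The only genuine content is the equivalence of the two descriptions of the constraint region, namely $\{q\le N_2\le n,\ q\le N_1\le N_2\}=\{q\le N_1\le n,\ N_1\le N_2\le n\}$, both of which unfold to $q\le N_1\le N_2\le n$. Because the sums are finite, there is no convergence issue and the interchange is unconditionally valid; the main (and essentially only) obstacle is the bookkeeping of getting the new summation bounds right and confirming that the swap neither gains nor loses any admissible pair $(N_1,N_2)$.

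Alternatively, to stay within the inductive idiom already used throughout this paper, I would fix the two sequences and induct on $n$ using Lemma \ref{Lemma 2.1} specialized to two sequences. The base case $n=q$ is immediate, since both sides collapse to the single term $b_q a_q=a_q b_q$. For the inductive step, Lemma \ref{Lemma 2.1} splits the left-hand side at $n+1$ as $b_{n+1}\sum_{N_1=q}^{n+1}a_{N_1}$ plus the left-hand side at $n$; applying the induction hypothesis to the latter, and separately peeling off the $N_1=n+1$ term together with the $b_{n+1}$ contributions from the right-hand side at $n+1$, I expect both expressions to reduce to $b_{n+1}\sum_{N_1=q}^{n+1}a_{N_1}+\sum_{N_1=q}^{n}a_{N_1}\sum_{N_2=N_1}^{n}b_{N_2}$, closing the induction. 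I anticipate the direct region-swap argument to be the cleaner of the two, with the inductive version serving mainly as a consistency check and as the natural stepping stone toward the general $m$-sequence inversion formula that follows.
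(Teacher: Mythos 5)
Your main argument is correct and is essentially the paper's own proof: the paper establishes the identity by expanding the double sum and regrouping the terms by $a_{N_1}$ instead of $b_{N_2}$, which is precisely your observation that both sides enumerate the pairs $(N_1,N_2)$ in the triangular region $q\le N_1\le N_2\le n$ exactly once. Your description of the two equivalent constraint systems is just a cleaner, more abstract phrasing of that same regrouping, and since the sums are finite no further justification is needed.
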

\begin{proof}
By expanding the sum, we get 
\begin{dmath*}
\sum_{N_2=q}^{n}{b_{N_2}\sum_{N_1=q}^{N_2}{a_{N_1}}}
=b_q \left( \sum_{N_1=q}^{q}{a_{N_1}}\right)
+b_{q+1} \left( \sum_{N_1=q}^{q+1}{a_{N_1}}\right)
+\cdots 
+b_{n-1} \left( \sum_{N_1=q}^{n-1}{a_{N_1}}\right)
+b_{n} \left( \sum_{N_1=q}^{n}{a_{N_1}}\right)
=b_q \left( a_{q}\right)
+b_{q+1} \left( a_{q}+a_{q+1}\right)
+\cdots 
+b_{n-1} \left( a_{q}+ \cdots + a_{n-1}\right)
+b_{n} \left( a_{q}+ \cdots + a_{n}\right)
.\end{dmath*}
By regrouping the $b_N$ terms instead of the $a_N$ terms, the expression becomes 
\begin{dmath*}
\sum_{N_2=q}^{n}{b_{N_2}\sum_{N_1=q}^{N_2}{a_{N_1}}}
=a_q \left( b_{q}+\cdots + b_{n}\right)
+a_{q+1} \left( b_{q+1}+\cdots + b_{n}\right)
+\cdots 
+a_{n-1} \left(b_{n-1}+b_{n}\right)
+a_{n} \left(b_{n}\right)
=a_q \left( \sum_{N_2=q}^{n}{b_{N_2}}\right)
+a_{q+1} \left( \sum_{N_2=q+1}^{n}{b_{N_2}}\right)
+\cdots 
+a_{n-1} \left( \sum_{N_2=n-1}^{n}{b_{N_2}}\right)
+a_{n} \left( \sum_{N_2=n}^{n}{b_{N_2}}\right)
=\sum_{N_1=q}^{n}{a_{N_1}\sum_{N_2=N_1}^{n}{b_{N_2}}}. 
\end{dmath*}
\end{proof}
\begin{corollary} \label{Corollary 3.1}
If all sequences are the same, Theorem \ref{Theorem 3.1} becomes 
$$
\sum_{N_2=q}^{n}{a_{N_2}\sum_{N_1=q}^{N_2}{a_{N_1}}}
=\sum_{N_1=q}^{n}{a_{N_1}\sum_{N_2=N_1}^{n}{a_{N_2}}}
.$$
\end{corollary}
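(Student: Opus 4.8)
The plan is to obtain this corollary as an immediate specialization of Theorem~\ref{Theorem 3.1}. That theorem holds for \emph{any} two sequences $a_{N_1}$ and $b_{N_2}$ defined on $[q,n]$, so the natural move is simply to let the second sequence coincide with the first, that is, to set $b_{N_2}=a_{N_2}$ for every index. Under this substitution the left-hand side of Theorem~\ref{Theorem 3.1}, namely $\sum_{N_2=q}^{n}{b_{N_2}\sum_{N_1=q}^{N_2}{a_{N_1}}}$, becomes exactly $\sum_{N_2=q}^{n}{a_{N_2}\sum_{N_1=q}^{N_2}{a_{N_1}}}$, while the right-hand side $\sum_{N_1=q}^{n}{a_{N_1}\sum_{N_2=N_1}^{n}{b_{N_2}}}$ becomes $\sum_{N_1=q}^{n}{a_{N_1}\sum_{N_2=N_1}^{n}{a_{N_2}}}$. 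Equating the two yields the claimed identity verbatim, so no further computation is needed.

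Since the statement is a direct instance of an already-proved result, there is no genuine obstacle here: the only point to verify is that the hypotheses of Theorem~\ref{Theorem 3.1} still hold after the substitution, which they plainly do, as a single sequence $a$ defined on $[q,n]$ certainly supplies two sequences on $[q,n]$ when used in both roles. If a self-contained argument were preferred over invoking the theorem, I would instead mirror its proof directly: expand both iterated sums term by term and observe that regrouping the summand $a_{N_2}a_{N_1}$ according to its smaller index rather than its larger index converts the triangular region $\{\,q\le N_1\le N_2\le n\,\}$ summed with the outer index $N_2$ into the same region summed with the outer index $N_1$, producing the inner sum $\sum_{N_2=N_1}^{n}a_{N_2}$. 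Both routes are routine; the specialization route is the shorter and the one I would present.
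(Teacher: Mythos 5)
Your proposal is correct and matches the paper's (implicit) treatment exactly: the corollary is obtained by the immediate substitution $b_{N_2}=a_{N_2}$ in Theorem~\ref{Theorem 3.1}, whose hypotheses plainly remain satisfied. Nothing further is needed.
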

\subsection{General case (for m sequences)}
We now prove the more general inversion formula with $m$ sequences which allows us to invert the order of summation for a recurrent sum of order $m$.  
\begin{theorem} \label{Theorem 3.2}
For any $m,q,n \in \mathbb{N}$ where $n \geq q$ and for any set of sequences $a_{(1);N_1},\cdots,a_{(m);N_m}$ defined in the interval $[q,n]$, we have that 
$$
\sum_{N_m=q}^{n}{a_{(m);N_m}\cdots\sum_{N_2=q}^{N_{3}}{a_{(2);N_2}\sum_{N_1=q}^{N_2}{a_{(1);N_1}}}}
=\sum_{N_1=q}^{n}{a_{(1);N_1}\sum_{N_2=N_1}^{n}{a_{(2);N_2}\cdots\sum_{N_m=N_{m-1}}^{n}{a_{(m);N_m}}}}
.$$
\end{theorem}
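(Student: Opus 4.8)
The plan is to induct on $m$, using the two-sequence inversion of Theorem \ref{Theorem 3.1} as the single engine that swaps two adjacent summations. The base case $m=2$ is exactly Theorem \ref{Theorem 3.1} (and $m=1$ is the trivial identity $\sum_{N_1=q}^{n}a_{(1);N_1}=\sum_{N_1=q}^{n}a_{(1);N_1}$), so I would assume the inversion formula holds for every collection of $m-1$ sequences and every upper bound, and then establish it for $m$.

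First I would isolate the two outermost summations of the left-hand side. Writing the order-$(m-2)$ inner block as a single sequence
\[
c_{N_{m-1}}=a_{(m-1);N_{m-1}}\sum_{N_{m-2}=q}^{N_{m-1}}\cdots\sum_{N_1=q}^{N_2}a_{(m-2);N_{m-2}}\cdots a_{(1);N_1},
\]
the left-hand side reads $\sum_{N_m=q}^{n}a_{(m);N_m}\sum_{N_{m-1}=q}^{N_m}c_{N_{m-1}}$. Applying Theorem \ref{Theorem 3.1} with the correspondence $b\leftrightarrow a_{(m)}$, $a\leftrightarrow c$ (and indices $N_2\leftrightarrow N_m$, $N_1\leftrightarrow N_{m-1}$) turns this into $\sum_{N_{m-1}=q}^{n}c_{N_{m-1}}\sum_{N_m=N_{m-1}}^{n}a_{(m);N_m}$; that is, the factor $a_{(m)}$ is pushed out into a trailing sum $\sum_{N_m=N_{m-1}}^{n}a_{(m);N_m}$ whose bounds depend on $N_{m-1}$. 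Since Theorem \ref{Theorem 3.1} holds for arbitrary sequences, it is legitimate to treat the composite $c_{N_{m-1}}$ as a single sequence here.

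The key idea is then to absorb this trailing factor into a redefined top sequence. Setting $\tilde a_{(m-1);N_{m-1}}=a_{(m-1);N_{m-1}}\bigl(\sum_{N_m=N_{m-1}}^{n}a_{(m);N_m}\bigr)$ — a perfectly legitimate sequence on $[q,n]$ for the fixed upper bound $n$ — the whole expression becomes the nested (left-hand) form of an order-$(m-1)$ recurrent sum in the sequences $a_{(1)},\ldots,a_{(m-2)},\tilde a_{(m-1)}$. The induction hypothesis now applies verbatim and inverts it to $\sum_{N_1=q}^{n}a_{(1);N_1}\sum_{N_2=N_1}^{n}a_{(2);N_2}\cdots\sum_{N_{m-1}=N_{m-2}}^{n}\tilde a_{(m-1);N_{m-1}}$. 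Unfolding $\tilde a_{(m-1)}$ back into $a_{(m-1);N_{m-1}}\sum_{N_m=N_{m-1}}^{n}a_{(m);N_m}$ reinstates the innermost summation over $N_m$ and yields exactly the claimed right-hand side.

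The step requiring the most care is the absorption-and-recognition above: one must check that after the single swap the remaining object genuinely has the order-$(m-1)$ nested shape, so that the induction hypothesis is applicable to $a_{(1)},\ldots,a_{(m-2)},\tilde a_{(m-1)}$, and that the absorbed factor carries lower limit $N_{m-1}$ and upper limit $n$, so that upon unfolding after inversion the nested upper limits telescope correctly into the innermost $\sum_{N_m=N_{m-1}}^{n}$ of the target. Everything else reduces to the routine observation that Theorem \ref{Theorem 3.1} may be applied to arbitrary sequences, including the composite sequence $c_{N_{m-1}}$.
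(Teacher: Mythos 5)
Your proposal is correct and follows essentially the same route as the paper: collapse the inner block into a single composite sequence, apply the two-sequence inversion (Theorem \ref{Theorem 3.1}) to the outermost pair, absorb the resulting trailing sum $\sum_{N_m=N_{m-1}}^{n}a_{(m);N_m}$ into a redefined top sequence, and then invoke the induction hypothesis on the resulting order-$(m-1)$ recurrent sum before unfolding. The only difference is cosmetic (you step from $m-1$ to $m$ where the paper steps from $m$ to $m+1$, and your $\tilde a_{(m-1)}$ is the paper's $A_{N_m}$).
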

\begin{proof}
1. Base Case: verify true for $m=2$. \\
This statement is true as proven in Theorem \ref{Theorem 3.1}. \\
2. Induction hypothesis: assume the statement is true until $m$. 
$$
\sum_{N_m=q}^{n}{a_{(m);N_m}\cdots\sum_{N_2=q}^{N_{3}}{a_{(2);N_2}\sum_{N_1=q}^{N_2}{a_{(1);N_1}}}}
=\sum_{N_1=q}^{n}{a_{(1);N_1}\sum_{N_2=N_1}^{n}{a_{(2);N_2}\cdots\sum_{N_m=N_{m-1}}^{n}{a_{(m);N_m}}}}
.$$
3. Induction step: we will show that this statement is true for $(m+1)$. \\
We have to show the following statement to be true: 
$$
\sum_{N_{m+1}=q}^{n}{a_{(m+1);N_{m+1}}\cdots\sum_{N_2=q}^{N_{3}}{a_{(2);N_2}\sum_{N_1=q}^{N_2}{a_{(1);N_1}}}}
=\sum_{N_1=q}^{n}{a_{(1);N_1}\sum_{N_2=N_1}^{n}{a_{(2);N_2}\cdots\sum_{N_{m+1}=N_{m}}^{n}{a_{(m+1);N_{m+1}}}}}
.$$
$ \\ $
\begin{dmath*}
\sum_{N_{m+1}=q}^{n}{a_{(m+1);N_{m+1}}\cdots\sum_{N_2=q}^{N_{3}}{a_{(2);N_2}\sum_{N_1=q}^{N_2}{a_{(1);N_1}}}}
=\sum_{N_{m+1}=q}^{n}{a_{(m+1);N_{m+1}}\left(\sum_{N_m=q}^{N_{m+1}}{a_{(m);N_m}\cdots\sum_{N_2=q}^{N_{3}}{a_{(2);N_2}\sum_{N_1=q}^{N_2}{a_{(1);N_1}}}}\right)}
.\end{dmath*} 
Let $b_{N_m}$ be the following sequence (that dependents only on $N_m$), 
$$
b_{N_m}=a_{(m);N_m}\sum_{N_{m-1}=q}^{N_{m}}{a_{(m-1);N_{m-1}}\cdots\sum_{N_2=q}^{N_{3}}{a_{(2);N_2}\sum_{N_1=q}^{N_2}{a_{(1);N_1}}}}
.$$ 
By applying this substitution in the previous expression, we obtain a recurrent sum of order 2 that contains the 2 sequences $a_{(m+1);N_{m+1}}$ and $b_{N_m}$. Then, we apply the inversion formula for the case of 2 sequences (Theorem \ref{Theorem 3.1}) to get the following, 
\begin{dmath*}
\sum_{N_{m+1}=q}^{n}{a_{(m+1);N_{m+1}}\cdots\sum_{N_2=q}^{N_{3}}{a_{(2);N_2}\sum_{N_1=q}^{N_2}{a_{(1);N_1}}}}
=\sum_{N_{m+1}=q}^{n}{a_{(m+1);N_{m+1}}\left(\sum_{N_m=q}^{N_{m+1}}{b_{N_m}}\right)}
=\sum_{N_m=q}^{n}{b_{N_m}\left(\sum_{N_{m+1}=N_m}^{n}{a_{(m+1);N_{m+1}}}\right)}
=\sum_{N_m=q}^{n}{a_{(m);N_m}\cdots\sum_{N_2=q}^{N_{3}}{a_{(2);N_2}\sum_{N_1=q}^{N_2}{a_{(1);N_1}\left(\sum_{N_{m+1}=N_m}^{n}{a_{(m+1);N_{m+1}}}\right)}}}
.\end{dmath*} 
The sum of $a_{(m+1);N_{m+1}}$ has $N_m$ and $n$ as lower and upper bounds. Thus, knowing that $n$ is a constant, the sum of $a_{(m+1);N_{m+1}}$ depends only on $N_m$. This allows us to  extract this sum from the inner sums to get 
\begin{dmath*}
\sum_{N_{m+1}=q}^{n}{a_{(m+1);N_{m+1}}\cdots\sum_{N_2=q}^{N_{3}}{a_{(2);N_2}\sum_{N_1=q}^{N_2}{a_{(1);N_1}}}}
=\sum_{N_m=q}^{n}{\left(a_{(m);N_m}\sum_{N_{m+1}=N_m}^{n}{a_{(m+1);N_{m+1}}}\right)\cdots\sum_{N_2=q}^{N_{3}}{a_{(2);N_2}\sum_{N_1=q}^{N_2}{a_{(1);N_1}}}}
.\end{dmath*} 
Let $A_{N_m}$ be the following sequence (that only depends on $N_m$), 
$$
A_{N_m}
=a_{(m);N_m}\sum_{N_{m+1}=N_m}^{n}{a_{(m+1);N_{m+1}}}
.$$
By substituting $A_{N_m}$ into the previous expression, we get a recurrent sum of order $m$ in terms of the following $m$ sequences:  $A_{N_m}, a_{(m-1);N_{m-1}}, \cdots , a_{(1);N_1}$. Then the inversion formula for the case of $m$ sequences (which was assumed to be true in the induction hypothesis) is applied, 
\begin{dmath*}
\sum_{N_{m+1}=q}^{n}{a_{(m+1);N_{m+1}}\cdots\sum_{N_2=q}^{N_{3}}{a_{(2);N_2}\sum_{N_1=q}^{N_2}{a_{(1);N_1}}}}
=\sum_{N_m=q}^{n}{A_{N_m}\cdots\sum_{N_2=q}^{N_{3}}{a_{(2);N_2}\sum_{N_1=q}^{N_2}{a_{(1);N_1}}}}
=\sum_{N_1=q}^{n}{a_{(1);N_1}\sum_{N_2=N_1}^{n}{a_{(2);N_2}\cdots\sum_{N_m=N_{m-1}}^{n}{A_{N_m}}}}
=\sum_{N_1=q}^{n}{a_{(1);N_1}\sum_{N_2=N_1}^{n}{a_{(2);N_2}\cdots\sum_{N_m=N_{m-1}}^{n}{a_{(m);N_m}\sum_{N_{m+1}=N_m}^{n}{a_{(m+1);N_{m+1}}}}}}
.\end{dmath*}
We conclude that it must hold for all $m \geq 2$. 
\end{proof}
\begin{corollary} \label{Corollary 3.2}
If all sequences are the same, Theorem \ref{Theorem 3.2} becomes 
$$
\sum_{N_m=q}^{n}{a_{N_m}\cdots\sum_{N_2=q}^{N_{3}}{a_{N_2}\sum_{N_1=q}^{N_2}{a_{N_1}}}}
=\sum_{N_1=q}^{n}{a_{N_1}\sum_{N_2=N_1}^{n}{a_{N_2}\cdots\sum_{N_m=N_{m-1}}^{n}{a_{N_m}}}}
.$$
\end{corollary}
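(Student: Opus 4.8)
The plan is to obtain this as an immediate specialization of Theorem~\ref{Theorem 3.2}, which has already been established for an arbitrary collection of sequences $a_{(1);N_1},\dots,a_{(m);N_m}$ defined on $[q,n]$. Since that theorem imposes no relation whatsoever among the $m$ sequences, I am free to choose them all to be copies of a single sequence. Concretely, I would set $a_{(i);N_i}=a_{N_i}$ for every $i\in\{1,\dots,m\}$, so that the position label $(i)$ becomes irrelevant and each factor depends only on its summation variable $N_i$.

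Under this substitution the left-hand side of Theorem~\ref{Theorem 3.2} collapses factor by factor: $a_{(m);N_m}\cdots a_{(1);N_1}$ becomes $a_{N_m}\cdots a_{N_1}$, yielding exactly the nested sum $\sum_{N_m=q}^{n}a_{N_m}\cdots\sum_{N_1=q}^{N_2}a_{N_1}$ on the left of the corollary. The right-hand side collapses in the same way, with $a_{(1);N_1},\,a_{(2);N_2},\,\dots,\,a_{(m);N_m}$ turning into $a_{N_1},\,a_{N_2},\,\dots,\,a_{N_m}$, while all summation bounds are left unchanged. The two sides then match the claimed identity verbatim, so no further computation is needed.

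There is essentially no obstacle here: the only point requiring attention is bookkeeping, namely checking that the label $(i)$ on each sequence and its summation variable $N_i$ are paired consistently on both sides of Theorem~\ref{Theorem 3.2}, so that dropping the position label really does install the same sequence $a$ in every slot. If one wished to avoid invoking Theorem~\ref{Theorem 3.2} altogether, an equally short self-contained route would be to repeat its induction argument with all sequences equal, taking Corollary~\ref{Corollary 3.1} as the base case $m=2$; but this merely reproduces the general proof with no simplification, so the direct specialization is the route I would take.
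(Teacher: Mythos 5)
Your proof is correct and matches the paper's treatment: the corollary is obtained exactly by specializing Theorem~\ref{Theorem 3.2} with $a_{(i);N_i}=a_{N_i}$ for all $i$, which is the only step required. The paper itself offers no further argument, so the direct substitution you describe is the intended proof.
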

Similarly, the innermost summation can be turned into the outermost summation as illustrated by Theorem \ref{Theorem 3.3}.  
\begin{theorem} \label{Theorem 3.3}
For any $m,q,n \in \mathbb{N}$ where $n \geq q$ and for any set of sequences $a_{(1);N_1},\cdots,a_{(m);N_m}$ defined in the interval $[q,n]$, we have that 
$$
\sum_{N_m=q}^{n}{a_{(m);N_m}\cdots\sum_{N_2=q}^{N_{3}}{a_{(2);N_2}\sum_{N_1=q}^{N_2}{a_{(1);N_1}}}}
=\sum_{N_1=q}^{n}{a_{(1);N_1}\sum_{N_m=N_1}^{n}{a_{(m);N_m}\cdots\sum_{N_3=N_{1}}^{N_4}{a_{(3);N_3}\sum_{N_2=N_{1}}^{N_3}{a_{(2);N_2}}}}}
.$$
\end{theorem}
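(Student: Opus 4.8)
The plan is to prove Theorem \ref{Theorem 3.3} by induction on $m$, mirroring the structure used for Theorem \ref{Theorem 3.2}, but taking care that here the result pulls out the \emph{innermost} summation ($N_1$) to become the outermost, while the remaining summations $N_2,\ldots,N_m$ retain their nested structure with the new shared lower bound $N_1$. The base case $m=2$ is exactly Theorem \ref{Theorem 3.1} (equivalently Corollary \ref{Corollary 3.1}), since for $m=2$ the right-hand side reads $\sum_{N_1=q}^{n} a_{(1);N_1}\sum_{N_2=N_1}^{n} a_{(2);N_2}$, which matches the two-sequence inversion formula already established.

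For the induction step, assume the statement holds for $m$ and consider the order-$(m+1)$ sum. First I would peel off the innermost sum by isolating the $N_1$-summation: I can apply the two-sequence inversion (Theorem \ref{Theorem 3.1}) to the two adjacent summations over $N_1$ and $N_2$, treating everything outside as constants with respect to these two indices. Concretely, the pair
\[
\sum_{N_2=q}^{N_3} a_{(2);N_2}\sum_{N_1=q}^{N_2} a_{(1);N_1}
\]
inverts into
\[
\sum_{N_1=q}^{N_3} a_{(1);N_1}\sum_{N_2=N_1}^{N_3} a_{(2);N_2},
\]
where the common upper bound is $N_3$ (the next index up). This moves the $N_1$-summation outward by one level, past the $N_2$-summation, and changes the lower bound of the $N_2$-sum from $q$ to $N_1$.

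\textbf{The main obstacle} will be propagating the newly extracted $N_1$-summation all the way to the outside, since after one application of Theorem \ref{Theorem 3.1} the index $N_1$ sits between $N_2$ and $N_3$, not at the top. The cleanest route is to reduce to the induction hypothesis: once $N_1$ has been moved just inside the outermost block, I would treat the sum over $N_2,\ldots,N_{m+1}$ (which now all share a lower bound depending on $N_1$, or can be shifted to do so) as an order-$m$ recurrent sum in those indices and invoke the case-$m$ statement to reverse its nesting, thereby placing the $N_2$-summation outermost among them and restoring the claimed right-hand side. Care is needed to verify that the bounds match at each stage—in particular that the shared lower bound $N_1$ propagates correctly as the inner summations are reversed, and that the upper bounds form the required telescoping chain $N_3, N_4, \ldots, n$. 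Alternatively, and perhaps more transparently, I would define an auxiliary sequence absorbing the outer $m-1$ sequences (as was done with $b_{N_m}$ and $A_{N_m}$ in the proof of Theorem \ref{Theorem 3.2}) so that a single application of the two-sequence formula plus one application of the induction hypothesis suffices; the bookkeeping of bounds is the only genuinely delicate point, and I expect the rest to be routine index manipulation.
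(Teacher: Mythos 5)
Your base case is right, and an induction on $m$ can be made to work, but the induction step as you describe it does not reach the stated right-hand side. Two concrete problems. First, your opening move --- applying Theorem \ref{Theorem 3.1} to the innermost pair, turning $\sum_{N_2=q}^{N_3}a_{(2);N_2}\sum_{N_1=q}^{N_2}a_{(1);N_1}$ into $\sum_{N_1=q}^{N_3}a_{(1);N_1}\sum_{N_2=N_1}^{N_3}a_{(2);N_2}$ --- leaves $N_1$ sitting between $N_3$ and $N_2$, and you never bridge the gap to the situation you then assume (``once $N_1$ has been moved just inside the outermost block''). Bubbling $N_1$ upward one level at a time is possible, but each further swap involves a summand depending on \emph{both} indices (e.g.\ $a_{(1);N_1}\sum_{N_2=N_1}^{N_3}a_{(2);N_2}$ depends on $N_1$ and on $N_3$), so Theorem \ref{Theorem 3.1} as stated --- for a product $b_{N_2}a_{N_1}$ of single-index sequences --- does not literally apply; you would need the general triangle interchange $\sum_{j=q}^{n}\sum_{i=q}^{j}{f(i,j)}=\sum_{i=q}^{n}\sum_{j=i}^{n}{f(i,j)}$. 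Second, your described endpoint is wrong: you say the final reversal places ``the $N_2$-summation outermost among'' $N_2,\dots,N_{m+1}$, but in the claimed right-hand side $N_2$ is \emph{innermost} (the order from outside in is $N_1,N_m,N_{m-1},\dots,N_2$); what you describe is the intermediate state produced by Theorem \ref{Theorem 3.2}, not the target.

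The repair is short, and the paper in fact needs no new induction at all: it applies Theorem \ref{Theorem 3.2} once to get $\sum_{N_1=q}^{n}{a_{(1);N_1}\sum_{N_2=N_1}^{n}{a_{(2);N_2}\cdots\sum_{N_m=N_{m-1}}^{n}{a_{(m);N_m}}}}$, then observes that the inner block is exactly the right-hand-side form of Theorem \ref{Theorem 3.2} for the $m-1$ sequences $a_{(2)},\dots,a_{(m)}$ on $[N_1,n]$, so a second (reverse) application of Theorem \ref{Theorem 3.2} converts it to $\sum_{N_m=N_1}^{n}{a_{(m);N_m}\cdots\sum_{N_2=N_1}^{N_3}{a_{(2);N_2}}}$. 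If you prefer to keep your induction, the correct order of operations is the opposite of what you wrote: first apply the induction hypothesis to the inner order-$m$ sum (with upper bound $N_{m+1}$ in place of $n$), which puts the $N_1$-summation directly inside the outermost $N_{m+1}$-summation, and only then perform a single interchange of the $N_{m+1}$- and $N_1$-sums over the triangle $q\leq N_1\leq N_{m+1}\leq n$; the bounds then come out exactly as claimed.
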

\begin{proof}
From Theorem \ref{Theorem 3.2}, 
$$
\sum_{N_m=q}^{n}{a_{(m);N_m}\cdots\sum_{N_2=q}^{N_{3}}{a_{(2);N_2}\sum_{N_1=q}^{N_2}{a_{(1);N_1}}}}
=\sum_{N_1=q}^{n}{a_{(1);N_1}\sum_{N_2=N_1}^{n}{a_{(2);N_2}\cdots\sum_{N_m=N_{m-1}}^{n}{a_{(m);N_m}}}}
.$$
Applying Theorem \ref{Theorem 3.2} to the inner part of the right side sum would transform it as follows 
$$
\sum_{N_2=N_1}^{n}{a_{(2);N_2}\cdots\sum_{N_m=N_{m-1}}^{n}{a_{(m);N_m}}}
=\sum_{N_m=N_1}^{n}{a_{(m);N_m}\cdots\sum_{N_3=N_{1}}^{N_4}{a_{(3);N_3}\sum_{N_2=N_{1}}^{N_3}{a_{(2);N_2}}}}
.$$
Hence, substituting back into Theorem \ref{Theorem 3.2} would give us the desired formula. 
\end{proof}
\begin{corollary} \label{Corollary 3.3}
If all sequences are the same, Theorem \ref{Theorem 3.3} becomes  
$$
\sum_{N_m=q}^{n}{a_{N_m}\cdots\sum_{N_2=q}^{N_{3}}{a_{N_2}\sum_{N_1=q}^{N_2}{a_{N_1}}}}
=\sum_{N_1=q}^{n}{a_{N_1}\sum_{N_m=N_1}^{n}{a_{N_m}\cdots\sum_{N_3=N_{1}}^{N_4}{a_{N_3}\sum_{N_2=N_{1}}^{N_3}{a_{N_2}}}}}
.$$
\end{corollary}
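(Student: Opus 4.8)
The plan is to avoid a fresh induction and instead bootstrap the already-established inversion formula (Theorem~\ref{Theorem 3.2}) by applying it twice. Observe first that both sides of the claimed identity are summations over the same index set $q \le N_1 \le N_2 \le \cdots \le N_m \le n$ with the same summand $a_{(m);N_m}\cdots a_{(1);N_1}$; the only difference is the \emph{nesting arrangement} of the summation symbols. The left-hand side is the standard recurrent nesting of Eq.~\eqref{eq1}, while the right-hand side has $N_1$ promoted to the outermost position and the remaining indices $N_2,\ldots,N_m$ left in standard recurrent nesting (now with lower bound $N_1$). So the task is purely to migrate $N_1$ to the outside without disturbing the internal arrangement of the others.

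First I would apply Theorem~\ref{Theorem 3.2} directly to the left-hand side. This pulls $N_1$ out to the front, but leaves the block $N_2,\ldots,N_m$ in the \emph{inverted} arrangement produced by that theorem, namely $\sum_{N_2=N_1}^{n} a_{(2);N_2}\cdots \sum_{N_m=N_{m-1}}^{n} a_{(m);N_m}$, where every upper bound equals $n$ and the lower bounds ascend. This is $N_1$-outermost, but not yet the target form, whose inner block is the standard recurrent nesting with ascending upper bounds.

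The second step is the key move: I would recognise the inner $(m-1)$-fold block as itself a recurrent sum of order $m-1$, in the sequences $a_{(2);N_2},\ldots,a_{(m);N_m}$, over the range $N_1 \le N_2 \le \cdots \le N_m \le n$, and presented precisely in the right-hand (inverted) shape of Theorem~\ref{Theorem 3.2}. Reading Theorem~\ref{Theorem 3.2} as an identity and applying it in reverse to this block --- with the constant $q$ replaced by $N_1$ and the sequence indices shifted --- converts the ascending-lower-bound arrangement back into the standard recurrent nesting $\sum_{N_m=N_1}^{n} a_{(m);N_m}\cdots \sum_{N_2=N_1}^{N_3} a_{(2);N_2}$. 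Substituting this rewritten block back under the outer $\sum_{N_1=q}^{n}$ yields exactly the right-hand side of the statement.

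The main obstacle is not conceptual but a matter of careful index bookkeeping: I must verify that after relabelling $q \mapsto N_1$ and shifting the sequence subscripts the inner block matches the Theorem~\ref{Theorem 3.2} template exactly, and that $N_1$ genuinely acts as a constant lower bound throughout that inner inversion (which it does, since the outer summation over $N_1$ is held fixed while the inner inversion is performed). Because Theorem~\ref{Theorem 3.2} is valid for every choice of lower bound, this relabelling is legitimate; once the template match is confirmed, the remainder is pure substitution and no further computation is required.
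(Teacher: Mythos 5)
Your proposal is correct and follows essentially the same route as the paper: the paper proves Theorem~\ref{Theorem 3.3} precisely by applying Theorem~\ref{Theorem 3.2} once to promote $N_1$ to the outermost position and then applying it again (read in reverse, with lower bound $N_1$ in place of $q$) to the inner $(m-1)$-fold block, after which Corollary~\ref{Corollary 3.3} is just the specialization to identical sequences. Your attention to the relabelling $q \mapsto N_1$ and to $N_1$ being held fixed during the inner inversion matches the paper's (implicit) bookkeeping.
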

\subsection{Inversion of p sequences from m sequences}
Finally, as we will show in this section, it is possible to partially invert the order of summation for a recurrent sum. In other words, as shown by the following theorem, it is possible to invert the order of summation of only the $p$ innermost summations from $m$ summations. 
\begin{theorem} \label{Theorem 3.4}
For any $m,q,n \in \mathbb{N}$ where $n \geq q$, for any $p \in [0,m]$, and for any set of sequences $a_{(1);N_1},\cdots,a_{(m);N_m}$ defined in the interval $[q,n]$, we have that 
\begin{dmath*}
\sum_{N_m=q}^{n}{a_{(m);N_m}\cdots\sum_{N_p=q}^{N_{p+1}}{a_{(p);N_p}\cdots\sum_{N_1=q}^{N_2}{a_{(1);N_1}}}}
=\sum_{N_m=q}^{n}{a_{(m);N_m}\cdots\sum_{N_{p+1}=q}^{N_{p+2}}{a_{(p+1);N_{p+1}}\sum_{N_1=q}^{N_{p+1}}{a_{(1);N_1}\sum_{N_2=N_1}^{N_{p+1}}{a_{(2);N_2}\cdots\sum_{N_p=N_{p-1}}^{N_{p+1}}{a_{(p);N_p}}}}}}.
\end{dmath*}
\end{theorem}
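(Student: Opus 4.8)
The plan is to recognize the $p$ innermost summations as a self-contained recurrent sum of order $p$ and to apply the full inversion formula (Theorem \ref{Theorem 3.2}) to that block alone, leaving the outer $m-p$ summations untouched. The whole difficulty of the earlier inversion results has already been absorbed into Theorem \ref{Theorem 3.2}, so no new induction is needed here.

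First I would isolate the inner block
$$
\sum_{N_p=q}^{N_{p+1}}{a_{(p);N_p}\cdots\sum_{N_1=q}^{N_2}{a_{(1);N_1}}}
$$
and observe that, for each fixed choice of the outer indices $N_{p+1},\ldots,N_m$, this is exactly the recurrent sum $R_{p,q,N_{p+1}}$ built from the sequences $a_{(1);N_1},\ldots,a_{(p);N_p}$; that is, it has the same shape as the left-hand side of Theorem \ref{Theorem 3.2} with $m$ replaced by $p$ and the upper bound $n$ replaced by the running index $N_{p+1}$. Next I would apply Theorem \ref{Theorem 3.2} to this block, which rewrites it as
$$
\sum_{N_1=q}^{N_{p+1}}{a_{(1);N_1}\sum_{N_2=N_1}^{N_{p+1}}{a_{(2);N_2}\cdots\sum_{N_p=N_{p-1}}^{N_{p+1}}{a_{(p);N_p}}}}.
$$
Substituting this inverted block back inside the outer summations over $N_{p+1},\ldots,N_m$ then yields precisely the claimed right-hand side.

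The only point requiring a moment's care—and the nearest thing to an obstacle—is the justification that Theorem \ref{Theorem 3.2} may be invoked with the \emph{variable} upper limit $N_{p+1}$ in place of the fixed constant $n$ for which it was stated. This is immediate: Theorem \ref{Theorem 3.2} holds for every admissible upper bound, and for each fixed value of the outer indices the quantity $N_{p+1}$ is simply a constant with respect to the summations over $N_1,\ldots,N_p$, with $N_{p+1}\geq q$ since its smallest value in the enclosing summation is $q$. Hence the hypotheses are met termwise and the rewriting is valid for each value taken by $N_{p+1}$.

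Finally I would dispose of the boundary cases, both of which are benign. For $p=0$ there is nothing to invert and the identity is a tautology, while for $p=m$ the outer summations disappear and the statement collapses to Theorem \ref{Theorem 3.2} itself. Thus the theorem follows by a single application of the $m$-sequence inversion formula to the innermost $p$-fold block.
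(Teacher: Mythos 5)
Your proposal is correct and follows essentially the same route as the paper: both isolate the innermost $p$-fold block, apply Theorem \ref{Theorem 3.2} with $m$ replaced by $p$ and the upper bound $n$ replaced by $N_{p+1}$, and substitute the inverted block back under the outer summations. Your added remarks on the variable upper limit and the boundary cases $p=0$ and $p=m$ are sound but not needed beyond what the paper already does.
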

\begin{proof}
By replacing $m$ by $p$ and $n$ by $N_{p+1}$ in Theorem \ref{Theorem 3.2}, we get the following relation, 
$$
\sum_{N_p=q}^{N_{p+1}}{a_{(p);N_p}\cdots\sum_{N_2=q}^{N_{3}}{a_{(2);N_2}\sum_{N_1=q}^{N_2}{a_{(1);N_1}}}}
=\sum_{N_1=q}^{N_{p+1}}{a_{(1);N_1}\sum_{N_2=N_1}^{N_{p+1}}{a_{(2);N_2}\cdots\sum_{N_p=N_{p-1}}^{N_{p+1}}{a_{(p);N_p}}}}
.$$
Thus, 
\begin{dmath*}
\sum_{N_m=q}^{n}{a_{(m);N_m}\cdots\sum_{N_p=q}^{N_{p+1}}{a_{(p);N_p}\cdots\sum_{N_1=q}^{N_2}{a_{(1);N_1}}}}
=\sum_{N_m=q}^{n}{a_{(m);N_m}\cdots \sum_{N_{p+1}}^{N_{p+2}}{a_{(p+1);N_{p+1}}\left(\sum_{N_p=q}^{N_{p+1}}{a_{(p);N_p}\cdots\sum_{N_1=q}^{N_2}{a_{(1);N_1}}}\right)}}
=\sum_{N_m=q}^{n}{a_{(m);N_m}\cdots\sum_{N_{p+1}=q}^{N_{p+2}}{a_{(p+1);N_{p+1}}\left(\sum_{N_1=q}^{N_{p+1}}{a_{(1);N_1}\sum_{N_2=N_1}^{N_{p+1}}{a_{(2);N_2}\cdots\sum_{N_p=N_{p-1}}^{N_{p+1}}{a_{(p);N_p}}}}\right)}}
.\end{dmath*}
\end{proof}
\begin{corollary} \label{Corollary 3.4}
If all sequences are the same, Theorem \ref{Theorem 3.4} becomes   
\begin{dmath*}
\sum_{N_m=q}^{n}{a_{N_m}\cdots\sum_{N_p=q}^{N_{p+1}}{a_{N_p}\cdots\sum_{N_1=q}^{N_2}{a_{N_1}}}}
=\sum_{N_m=q}^{n}{a_{N_m}\cdots\sum_{N_{p+1}=q}^{N_{p+2}}{a_{N_{p+1}}\sum_{N_1=q}^{N_{p+1}}{a_{N_1}\sum_{N_2=N_1}^{N_{p+1}}{a_{N_2}\cdots\sum_{N_p=N_{p-1}}^{N_{p+1}}{a_{N_p}}}}}}.
\end{dmath*}
\end{corollary}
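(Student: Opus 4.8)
The plan is to obtain Corollary~\ref{Corollary 3.4} as the immediate specialization of Theorem~\ref{Theorem 3.4} to the case in which all the sequences coincide. Concretely, I would take the statement of Theorem~\ref{Theorem 3.4}, which is valid for \emph{any} set of sequences $a_{(1);N_1},\ldots,a_{(m);N_m}$, and impose $a_{(k);N}=a_N$ for every $k\in\{1,\ldots,m\}$ and every $N\in[q,n]$, then read off what each side becomes.

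First I would simplify the left-hand side. Suppressing the parenthetical label $(k)$ collapses each factor $a_{(k);N_i}$ to $a_{N_i}$, while the nested summation structure and all of its bounds (the lower bound $q$ and the upper bounds $N_{i+1}$) are untouched by this substitution. This yields exactly the left-hand side appearing in Corollary~\ref{Corollary 3.4}. Treating the right-hand side in the same manner, the outer block $\sum_{N_m=q}^{n}{a_{(m);N_m}\cdots\sum_{N_{p+1}=q}^{N_{p+2}}{a_{(p+1);N_{p+1}}}}$ together with the partially inverted inner block $\sum_{N_1=q}^{N_{p+1}}{a_{(1);N_1}\sum_{N_2=N_1}^{N_{p+1}}{a_{(2);N_2}\cdots\sum_{N_p=N_{p-1}}^{N_{p+1}}{a_{(p);N_p}}}}$ both have their factors collapse via $a_{(k);N_i}\mapsto a_{N_i}$, with every summation bound preserved, producing precisely the right-hand side of Corollary~\ref{Corollary 3.4}.

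Since Theorem~\ref{Theorem 3.4} holds for this particular all-equal choice of sequences exactly as it does for any other, the equality between the two specialized expressions is immediate and the corollary follows. There is essentially no obstacle here; the only point deserving a moment's attention is to confirm that suppressing the labels does not inadvertently identify two distinct running indices, so that each variable $N_i$ still ranges over its intended interval. This is automatic, because the substitution alters only the names of the summand factors and leaves the summation indices together with their bounds exactly as in Theorem~\ref{Theorem 3.4}.
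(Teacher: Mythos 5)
Your proposal is correct and matches the paper's treatment: the corollary is exactly the specialization of Theorem~\ref{Theorem 3.4} to the all-equal choice $a_{(k);N}=a_N$, which the paper regards as immediate and states without further proof. Your added remark that the substitution renames only the summand factors while leaving every summation index and bound intact is precisely the (routine) check that makes the specialization valid.
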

Similarly, the innermost summation can be pulled back to the $p$-th position as illustrated by Theorem \ref{Theorem 3.5}.   
\begin{theorem} \label{Theorem 3.5}
For any $m,q,n \in \mathbb{N}$ where $n \geq q$, for any $p \in [0,m]$, and for any set of sequences $a_{(1);N_1},\cdots,a_{(m);N_m}$ defined in the interval $[q,n]$, we have that 
\begin{dmath*}
\sum_{N_m=q}^{n}{a_{(m);N_m}\cdots\sum_{N_p=q}^{N_{p+1}}{a_{(p);N_p}\cdots\sum_{N_1=q}^{N_2}{a_{(1);N_1}}}}
=\sum_{N_m=q}^{n}{a_{(m);N_m}\cdots\sum_{N_{p+1}=q}^{N_{p+2}}{a_{(p+1);N_{p+1}}\sum_{N_1=q}^{N_{p+1}}{a_{(1);N_1}\sum_{N_p=N_1}^{N_{p+1}}{a_{(p);N_p}\sum_{N_{p-1}=N_1}^{N_{p}}{a_{(p-1);N_{p-1}}\cdots\sum_{N_2=N_{1}}^{N_{3}}{a_{(2);N_2}}}}}}}.
\end{dmath*}
\end{theorem}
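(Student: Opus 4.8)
The plan is to prove Theorem \ref{Theorem 3.5} by applying Theorem \ref{Theorem 3.3} to the innermost block of $p$ summations, in exact analogy with the way Theorem \ref{Theorem 3.4} was derived from Theorem \ref{Theorem 3.2}. The essential observation is that, on the left-hand side of the claimed identity, the inner $p$ summations
$$
\sum_{N_p=q}^{N_{p+1}}{a_{(p);N_p}\cdots\sum_{N_1=q}^{N_2}{a_{(1);N_1}}}
$$
form a recurrent sum of order $p$ whose upper bound is the index $N_{p+1}$. Because $N_{p+1}$ is held fixed by the $m-p$ outer summations, it behaves as a constant for this inner block, and an inversion formula can be applied to it directly.

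First I would substitute $m \mapsto p$ and $n \mapsto N_{p+1}$ into Theorem \ref{Theorem 3.3}, which yields
$$
\sum_{N_p=q}^{N_{p+1}}{a_{(p);N_p}\cdots\sum_{N_1=q}^{N_2}{a_{(1);N_1}}}=\sum_{N_1=q}^{N_{p+1}}{a_{(1);N_1}\sum_{N_p=N_1}^{N_{p+1}}{a_{(p);N_p}\cdots\sum_{N_2=N_{1}}^{N_3}{a_{(2);N_2}}}}.
$$
The right-hand side here is precisely the inner block that appears on the right-hand side of the target identity. I would then substitute this relation back into the left-hand side of Theorem \ref{Theorem 3.5}, replacing the inner block while leaving the outer summations over $N_m,\ldots,N_{p+1}$ untouched. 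This substitution immediately reproduces the desired formula, so no induction on $m$ is required: Theorem \ref{Theorem 3.3} already packages the entire inversion of the $p$ innermost summations in a single step.

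The only point that genuinely requires care -- and hence the closest thing to an obstacle -- is confirming that the bound substitution $n \mapsto N_{p+1}$ in Theorem \ref{Theorem 3.3} is legitimate. Since that theorem is stated for an arbitrary upper bound, and since the inner summations never feed back into the outer ones (the index $N_{p+1}$ is a genuine constant relative to $N_p,\ldots,N_1$), replacing the numerical limit $n$ by the summation index $N_{p+1}$ is valid. Thus the entire content of the proof reduces to recognizing that the partial inversion asserted in Theorem \ref{Theorem 3.5} is nothing more than Theorem \ref{Theorem 3.3} applied to a sub-block carrying a variable upper limit.
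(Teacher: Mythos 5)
Your proposal is correct and matches the paper's own argument: the paper likewise obtains Theorem \ref{Theorem 3.5} by applying Theorem \ref{Theorem 3.3} with $m$ replaced by $p$ and $n$ replaced by $N_{p+1}$ to the innermost block of $p$ summations, exactly as in the derivation of Theorem \ref{Theorem 3.4} from Theorem \ref{Theorem 3.2}. Your remark that the substitution $n \mapsto N_{p+1}$ is legitimate because $N_{p+1}$ is constant relative to the inner indices is the same (implicit) justification the paper relies on.
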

\begin{proof}
By applying Theorem \ref{Theorem 3.3} (with $m$ substituted by $p$ and $n$ substituted by $N_{p+1}$) to Theorem \ref{Theorem 3.4}, we get the desired theorem. 
\end{proof}
\begin{corollary}  \label{Corollary 3.5}
If all sequences are the same, Theorem \ref{Theorem 3.5} becomes 
\begin{dmath*}
\sum_{N_m=q}^{n}{a_{N_m}\cdots\sum_{N_p=q}^{N_{p+1}}{a_{N_p}\cdots\sum_{N_1=q}^{N_2}{a_{N_1}}}}
=\sum_{N_m=q}^{n}{a_{N_m}\cdots\sum_{N_{p+1}=q}^{N_{p+2}}{a_{N_{p+1}}\sum_{N_1=q}^{N_{p+1}}{a_{N_1}\sum_{N_p=N_1}^{N_{p+1}}{a_{N_p}\sum_{N_{p-1}=N_1}^{N_{p}}{a_{N_{p-1}}\cdots\sum_{N_2=N_{1}}^{N_{3}}{a_{N_2}}}}}}}.
\end{dmath*}
\end{corollary}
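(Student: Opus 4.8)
The plan is to obtain this corollary as an immediate specialization of Theorem \ref{Theorem 3.5}, exactly as every preceding corollary in this section is derived from its parent theorem. Theorem \ref{Theorem 3.5} holds for an arbitrary family of sequences $a_{(1);N_1},\ldots,a_{(m);N_m}$, so the strategy is simply to collapse this family to a single sequence and verify that both sides reduce to the forms displayed in the corollary.

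First I would set $a_{(i);N}=a_{N}$ for every index $i\in\{1,\ldots,m\}$, i.e.\ take all $m$ sequences to be identical copies of one sequence $a$. Under this substitution each factor $a_{(i);N_i}$ appearing on either side of the identity in Theorem \ref{Theorem 3.5} becomes $a_{N_i}$, while the nesting of the summations and all of their bounds are left completely unchanged. This is the crucial structural point: the limits of every sum depend only on the summation indices $N_1,\ldots,N_m$ together with $q$ and $n$, and never on the labels attached to the sequences, so replacing the sequences cannot alter the summation scheme.

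Carrying out this replacement term by term, the left-hand side of Theorem \ref{Theorem 3.5} turns into $\sum_{N_m=q}^{n}{a_{N_m}\cdots\sum_{N_p=q}^{N_{p+1}}{a_{N_p}\cdots\sum_{N_1=q}^{N_2}{a_{N_1}}}}$, and the right-hand side turns into the nested sum $\sum_{N_m=q}^{n}{a_{N_m}\cdots\sum_{N_{p+1}=q}^{N_{p+2}}{a_{N_{p+1}}\sum_{N_1=q}^{N_{p+1}}{a_{N_1}\sum_{N_p=N_1}^{N_{p+1}}{a_{N_p}\cdots\sum_{N_2=N_{1}}^{N_{3}}{a_{N_2}}}}}}$ displayed in the statement. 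Since the two sides of Theorem \ref{Theorem 3.5} are equal as an identity valid for every choice of sequences, they remain equal under the particular choice $a_{(i);N}=a_{N}$, which is precisely the asserted equality.

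There is no genuine obstacle here; the only thing requiring care is the bookkeeping, namely confirming that each occurrence of a labelled factor $a_{(i);N_i}$ is correctly paired with its own summation variable $N_i$ so that it collapses to the intended factor $a_{N_i}$ on both sides. Once that index-matching is checked, the corollary follows with no further computation, so I would present it as a one-line consequence: substitute $a_{(i);N}=a_N$ into Theorem \ref{Theorem 3.5}.
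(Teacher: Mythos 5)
Your proposal is correct and matches the paper's treatment: the corollary is exactly the specialization $a_{(i);N}=a_N$ of Theorem \ref{Theorem 3.5}, which holds for arbitrary sequences, and the paper likewise presents it as an immediate substitution with no further argument. Your added remark that the summation bounds depend only on the indices $N_1,\ldots,N_m$ (not on the sequence labels) is sound and justifies the one-line reduction.
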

\section{Reduction Formulas} \label{Reduction Formulas}
The objective of this section is to introduce formulas which can be used to reduce recurrent sums from their originally recurrent form $\left(\sum_{N_m=q}^{n}{\cdots \sum_{N_1=q}^{N_2}{a_{N_m} \cdots a_{N_1}}}\right)$ to a form containing only simple non-recurrent sums $\left(\left(\sum_{N=q}^{n}{(a_N)^i}\right)^y\right)$. 
\subsection{A brief introduction to partitions}
In this paper, partitions are involved in the reduction formula for a recurrent sum. For this reason, in this section, we will present a brief introduction to partitions.  
\begin{definition}
A partition of a non-negative integer $m$ is a set of positive integers whose sum equals $m$.
We can represent a partition of $m$ as a vector $(y_{k,1},\cdots,y_{k,m})$ that verifies
\begin{align}
\begin{pmatrix}
y_{k,1} \\
\vdots \\
y_{k,m} \\
\end{pmatrix}
\cdot
\begin{pmatrix}
1 \\
\vdots \\
m \\
\end{pmatrix}
=y_{k,1}+2y_{k,2}+ \cdots + my_{k,m}=m
.\end{align}
\end{definition}
The set of partitions of a non-negative integer $m$ is the set of vectors $(y_{k,1},\cdots,y_{k,m})$ that verify the previous identity. We will denote this set by $P$. The cardinality of this set is equal to the number of partitions of $m$ (which is the partition function denoted by $p(m)$),
\begin{equation}
\text{Card}(P)=p(m).
\end{equation}
Hence, the set of partitions of $m$ is the set of vectors $\{(y_{1,1},\cdots,y_{1,m}),(y_{2,1},\cdots,y_{2,m}), \cdots \}$ which consists of $p(m)$ vectors. The value of $p(m)$ is obtained from the generating function developed by Euler in the mid-eighteen century \cite{PartitionEuler}, 
\begin{equation}
\sum_{m=0}^{\infty}{p(m)x^m}=\prod_{j=1}^{\infty}{\frac{1}{1-x^j}}
.\end{equation}
Euler also showed that this relation implies the following recurrent definition for $p(m)$, 
\begin{equation}
p(m)=\sum_{j=1}^{\infty}{(-1)^{j-1} \left(p \left(m- \frac{j(3j-1)}{2} \right)-p \left(m- \frac{j(3j+1)}{2} \right) \right)}.
\end{equation}
In 1918, Hardy and Ramanujan provided an asymptotic expression for $p(m)$ in \cite{Hardy}. Later, in 1937, Rademacher was able to improve on Hardy and Ramanujan’s formula by proving the following expression for $p(m)$ in \cite{Rademacher}, 
\begin{equation}
p(m)= \frac{1}{\pi \sqrt{2}} \sum_{k=1}^{\infty}{\sqrt{k} A_{k}(m) \frac{d}{dm} \left[ \frac{\sinh \left( \frac{\pi}{k} \sqrt{\frac{2}{3} \left(m- \frac{1}{24} \right)} \right)}{\sqrt{m- \frac{1}{24}}} \right]}
\end{equation}
where $A_{k}(m)$ is a Kloosterman type sum, 
\begin{equation}
A_{k}(m)=\sum_{\substack{0 \leq h <k \\ gcd(h,k)=1}}{e^{\pi i (s(h,k)-2mh/k)}}
\end{equation}
and where the notation $s(m,k)$ represents a Dedekind sum. \\
However, this formula has the disadvantage of being an infinite sum. This formula remained the only exact explicit formula for $p(m)$ until Ono and Bruinier presented a new formula for $p(m)$ as a finite sum \cite{Ono}.
 \\ 
Additionally, two of the most famous ways of representing a partition are using Ferrers diagrams or using Young diagrams. Similarly, there exists some variants of Ferrers diagrams that are used (see \cite{propp1989some}). 
\begin{remark}
{\em For readers intrested in a more detailed explanation of partition, see \cite{andrews1998theory}. }
\end{remark}
\subsection{Reduction Theorem and Partition Identities}
We will start this section by proving several lemmas which are needed in order to prove the main theorem of this section (Theorem \ref{Theorem 4.1}, which we will call the reduction theorem). However, some of these lemmas are important on their own as they provide relations governing partitions. \\

We start by proving the following trivial lemma.  
\begin{lemma} \label{Lemma 4.1}
No partition of a non-negative integer $m$ constructed from a sum of $r$ terms (positive integers) can contain an integer larger or equal to $m-r+2$. 
\end{lemma}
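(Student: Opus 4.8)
The plan is to argue directly by bounding the largest admissible part in terms of the number of parts. Recall that a partition of $m$ into $r$ terms is, in the vector notation of the preceding definition, a choice of nonnegative integers $(y_{k,1},\dots,y_{k,m})$ with $\sum_{i=1}^{m}{i\,y_{k,i}}=m$ and with exactly $r$ parts, i.e. $\sum_{i=1}^{m}{y_{k,i}}=r$. Equivalently, it is a multiset of $r$ positive integers whose sum is $m$. First I would argue by contradiction: suppose such a partition contains a part equal to some integer $s$ with $s\geq m-r+2$.

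The key observation is simply that each of the remaining $r-1$ parts is a positive integer and hence at least $1$. Isolating the chosen part $s$, I would write the total as $m = s + (\text{sum of the other } r-1 \text{ parts})$, and then bound the trailing sum below by $r-1$. This gives $m \geq s + (r-1)$, that is, $s \leq m-r+1$, which contradicts $s \geq m-r+2$ and proves the lemma. In the vector formulation the same step reads: if $y_{k,i_0}\geq 1$ for some index $i_0$, then splitting off one copy of $i_0$ yields $m=\sum_{i}{i\,y_{k,i}} \geq i_0 + (r-1)$, so again $i_0 \leq m-r+1$.

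There is essentially no hard step here; the statement is a one-line consequence of the positivity of the parts. The only point requiring a little care is the off-by-one bookkeeping: the $r-1$ other parts contribute at least $r-1$ (not $r$), so the maximal part is bounded by $m-r+1$, which is exactly one below the threshold $m-r+2$ appearing in the statement. Making sure this boundary count is stated correctly is the sole place where a careless slip could occur.
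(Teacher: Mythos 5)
Your proof is correct and is essentially the same argument as the paper's: both rest on the single observation that the other $r-1$ parts contribute at least $r-1$, so a part $s$ forces $m \geq s+(r-1)$, i.e.\ $s \leq m-r+1$. The paper phrases it as a direct bound on the smallest sum containing $i$, while you phrase it by contradiction, but the content is identical.
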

\begin{proof}
The smallest sum of $r$ positive integers containing $i$ is $i+\underbrace{(1+ \cdots +1)}_{(r-1)}=i+(r-1)$. \\
If $i \geq m-r+2$ then $i+(r-1)\geq m-r+2+r-1=m+1>m$. \\
Hence, such a sum, being strictly larger than $m$, cannot be a partition of $m$. 
\end{proof}
Before we can proceed to prove the other needed lemmas, we need to define the following notation: Let $[x^r]\left(P(x)\right)$ represent the coefficient of $x^r$ in $P(x)$. Let $x^{\overline{m}}=x(x+1)\cdots (x+m-1)$ represent the Rising factorial. Let $(x)_m=x(x-1)\cdots (x-m+1)$ represent the Falling factorial. \\
The original definition of Stirling numbers of the first kind $S(m,r)$ was as the coefficients in the expansion of $(x)_m$:
\begin{equation}
(x)_m
=\sum_{k=0}^{m}{S(m,k)x^k} \,\,\,\,\,\,\,\, or \,\,\,\,\,\,\,\,   S(m,r)=[x^r](x)_m
.\end{equation}
In a similar way, the unsigned Stirling numbers of the first kind, denoted $|S(m,r)|$ or ${m \brack r}$, can be expressed in terms of the Rising factorial $x^{\overline{m}}$:
\begin{equation}
x^{\overline{m}}
=\sum_{k=0}^{m}{{m \brack k}x^k} \,\,\,\,\,\,\,\, or \,\,\,\,\,\,\,\,   {m \brack r}=[x^r]\left(x^{\overline{m}}\right)
.\end{equation}
From this definition, the famous finite sum of the unsigned Stirling numbers of the first kind can be directly deduced by substituting $x$ by 1 to get 
\begin{equation}
\sum_{k=0}^{m}{{m \brack k}}
=1(1+1)\cdots (1+m-1)=m! 
.\end{equation}
Note that $|S(m,r)|$ can also be defined as the number of permutations of $m$ elements with $r$ disjoint cycles. Similarly, the previous relation can be obtained by noticing that permutations are partitioned by number of cycles. 
\begin{remark}
{\em More details on Stirling numbers of the first kind can be found in  \cite{loeb1992generalization}. }
\end{remark}
For simplicity, we define $\sum{f(i)}$ to mean $\sum_{i=1}^{m}{f(i)}$. In particular, $\sum{i.y_{k,i}}=\sum_{i=1}^{m}{i.y_{k,i}}$ and $\sum{y_{k,i}}=\sum_{i=1}^{m}{y_{k,i}}$. 
Additionally, let a partition of $m$ of length $r$ refer to a partition $(y_{k,1},\cdots,y_{k,m})$ of $m$ such that $\sum{y_{k,i}}=r$. \\
Now that we have defined the needed notation, we can continue proving the required lemmas.  
\begin{lemma} \label{Lemma 4.2}
Let $m$ and $r$ be two non-negative integers with $r \leq m$, the following sum over partitions of $m$ of length $r$ can be expressed in terms of the unsigned Stirling numbers of the first kind as follows, 
$$
\sum_{\substack{k \\ \sum{i.y_{k,i}}=m \\ \sum{y_{k,i}}=r}}{\prod_{i=1}^{m}{\frac{1}{i^{y_{k,i}}(y_{k,i})!}}}
=\frac{1}{m!}{m \brack r}
.$$
\end{lemma}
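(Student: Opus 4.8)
The plan is to prove this identity by recognizing both sides as coefficients extracted from the same generating function, namely the rising factorial $x^{\overline{m}}$. The right-hand side is, by definition, $\frac{1}{m!}$ times the Stirling number ${m \brack r} = [x^r](x^{\overline{m}})$. So the task reduces to showing that the sum over partitions on the left equals $\frac{1}{m!}[x^r](x^{\overline{m}})$. First I would write $x^{\overline{m}} = \prod_{i=0}^{m-1}(x+i)$ and take its logarithm to convert the product into a sum, obtaining $\log x^{\overline{m}} = \sum_{i=1}^{m-1}\log(1 + i/x) \cdot(\text{after factoring})$; more cleanly, I would work with the exponential generating function for the logarithm of the rising factorial. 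The classical fact I intend to invoke is that $x^{\overline{m}} = \exp\left(\sum_{i\geq 1} \frac{(-1)^{i-1}}{i} p_i\, x^i\right)$-type expansions, where the power sums of $\{0,1,\dots,m-1\}$ enter; but the cleanest route uses the formula $\log(m!) + \log\binom{x+m-1}{m}$-style manipulation.

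Concretely, I would exploit the exponential formula / the relationship between the Stirling numbers and power sums. Writing $x^{\overline{m}}/m!$ and taking $x \to 1/t$ suitably, one finds that the generating function factors as $\prod_{i=1}^{m}\exp\!\left(\frac{t^i}{i} h_i\right)$ where the $h_i$ encode the structure; expanding the exponential of a sum via the multinomial theorem produces exactly a sum over vectors $(y_{k,1},\dots,y_{k,m})$ with weight $\prod_i \frac{1}{i^{y_{k,i}}(y_{k,i})!}$, and the constraint $\sum i\, y_{k,i} = m$ arises from matching the total degree $m$ while $\sum y_{k,i} = r$ matches the power of $x$ (equivalently, the number of cycles $r$). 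This is precisely the combinatorial content of the exponential formula: the unsigned Stirling numbers ${m \brack r}$ count permutations of $m$ elements into $r$ cycles, and a permutation's cycle type is exactly a partition of $m$, with $y_{k,i}$ the number of cycles of length $i$; the number of permutations with a given cycle type is $\frac{m!}{\prod_i i^{y_{k,i}}(y_{k,i})!}$.

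Given this, the fastest and most self-contained argument is the direct combinatorial one. I would invoke the cycle-counting interpretation of ${m \brack r}$ already stated in the excerpt: ${m \brack r}$ is the number of permutations of $m$ elements with $r$ disjoint cycles. Each such permutation determines a partition of $m$ via its cycle type, where $y_{k,i}$ records how many cycles have length $i$; the conditions $\sum i\, y_{k,i} = m$ (the lengths sum to $m$) and $\sum y_{k,i} = r$ (there are $r$ cycles total) are automatic. The standard count of permutations with a prescribed cycle type $(y_{k,1},\dots,y_{k,m})$ is $\frac{m!}{\prod_{i=1}^{m} i^{y_{k,i}}(y_{k,i})!}$. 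Summing over all cycle types with exactly $r$ cycles therefore gives
$$
{m \brack r} = \sum_{\substack{k \\ \sum i\,y_{k,i}=m \\ \sum y_{k,i}=r}} \frac{m!}{\prod_{i=1}^{m} i^{y_{k,i}}(y_{k,i})!},
$$
and dividing through by $m!$ yields the claim.

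The main obstacle, and the step deserving the most care, is justifying the cycle-type counting formula $\frac{m!}{\prod_i i^{y_{k,i}}(y_{k,i})!}$ itself if one wants a fully self-contained proof rather than citing it. If the paper prefers an analytic derivation, the delicate point shifts to correctly bookkeeping the two simultaneous constraints when expanding $\exp\left(\sum_i c_i x^i\right)$ via the multinomial theorem — one must verify that extracting $[x^r]$ after the substitution isolates $\sum y_{k,i} = r$ while the degree bookkeeping enforces $\sum i\,y_{k,i} = m$, and that the factor $i^{y_{k,i}}$ in the denominator matches the coefficients $c_i = \frac{(-1)^{i-1}}{i}$ (or their appropriate sign-free analogue for the unsigned version) arising from $\log x^{\overline{m}}$. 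I expect the combinatorial route to be strictly cleaner here, so I would present that as the primary argument and relegate the generating-function verification to a remark.
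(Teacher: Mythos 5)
Your argument is correct, but it takes a genuinely different route from the paper. The paper proves this lemma by recognizing the left-hand sum as a special evaluation of the partial Bell polynomial, namely $\frac{1}{m!}B_{m,r}(0!,1!,\cdots,(m-r)!)$, and then invoking the cited identity $B_{m,r}(0!,1!,\cdots,(m-r)!)={m \brack r}$, together with Lemma 4.1 to justify extending the product index from $m-r+1$ up to $m$. You instead use the cycle-counting interpretation directly: ${m \brack r}$ counts permutations of $m$ elements with $r$ cycles, permutations are classified by cycle type, a cycle type is exactly a partition of $m$ of length $r$, and the number of permutations of a prescribed cycle type is $m!\big/\prod_{i}i^{y_{k,i}}(y_{k,i})!$. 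Summing over cycle types and dividing by $m!$ gives the identity. Your combinatorial route is more self-contained and elementary (it needs no external Bell polynomial identity, only the standard cycle-type count, which you correctly flag as the one step requiring justification if full rigor is demanded); the paper's route buys consistency with the Bell polynomial machinery it reuses later in Proposition 4.1. The generating-function sketch in your first two paragraphs is too vague to stand on its own --- the passage from $\log x^{\overline{m}}$ to the claimed exponential factorization is not actually carried out, and the bookkeeping of the two constraints is only asserted --- but since you explicitly demote it to a remark and rest the proof on the combinatorial argument, this does not affect the validity of your proposal.
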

\begin{proof}
A Bell polynomial is defined as follows 
\begin{dmath*}
B_{m,r}(x_1,x_2,\cdots , x_{m-r+1})
=\sum_{\substack{y_1+2y_2+\cdots +(m-r+1)y_{m-r+1}=m \\ y_1+y_2+\cdots +y_{m-r+1}=r}}{\frac{m!}{y_1!y_2! \cdots y_{m-r+1}!}\left(\frac{x_1}{1!}\right)^{y_1} \left(\frac{x_2}{2!}\right)^{y_2} \cdots \left(\frac{x_{m-r+1}}{(m-r+1)!}\right)^{y_{m-r+1}}}
.\end{dmath*}
These polynomials can also be rewritten more compactly as 
$$
B_{m,r}(x_1,x_2,\cdots , x_{m-r+1})
=m!\sum_{\substack{k \\ \sum{i.y_{k,i}}=m \\ \sum{y_{k,i}}=r}}{\prod_{i=1}^{m-r+1}{\frac{1}{y_{k,i}!} \left(\frac{x_i}{i!}\right)^{y_{k,i}}}}
.$$
A property of the Bell polynomial, shown in \cite{wang2009general}, is that the value of the Bell polynomial on the sequence of factorials equals an unsigned Stirling number of the first kind, 
$$
B_{m,r}(0!,1!,\cdots , (m-r)!)=|S(m,r)|={m \brack r} 
.$$
Likewise, by a numerical substitution into the definition of Bell polynomials, we have 
$$
B_{m,r}(0!,1!,\cdots , (m-r)!)=m!\sum_{\substack{k \\ \sum{i.y_{k,i}}=m \\ \sum{y_{k,i}}=r}}{\prod_{i=1}^{m-r+1}{\frac{1}{i^{y_{k,i}}(y_{k,i})!}}} 
.$$
Hence, by equating, we get 
$$
\sum_{\substack{k \\ \sum{i.y_{k,i}}=m \\ \sum{y_{k,i}}=r}}{\prod_{i=1}^{m-r+1}{\frac{1}{i^{y_{k,i}}(y_{k,i})!}}}
=\frac{1}{m!}{m \brack r}
.$$
From Lemma \ref{Lemma 4.1}, we know that the biggest integer that can appear in a partition of an integer $m$ using $r$ terms is $m-r+1$ (which means that $y_{k,m-r}= \cdots = y_{k,m}=0$). Thus, we get  
$$
\sum_{\substack{k \\ \sum{i.y_{k,i}}=m \\ \sum{y_{k,i}}=r}}{\prod_{i=1}^{m}{\frac{1}{i^{y_{k,i}}(y_{k,i})!}}}
=\sum_{\substack{k \\ \sum{i.y_{k,i}}=m \\ \sum{y_{k,i}}=r}}{\prod_{i=1}^{m-r+1}{\frac{1}{i^{y_{k,i}}(y_{k,i})!}}}
=\frac{1}{m!}{m \brack r}
.$$
\end{proof}
By Adding the arguments of the sum from Lemma \ref{Lemma 4.2} for all possible partition lengths, we obtain the following identity.  
\begin{lemma} \label{Lemma 4.3}
Let $m$ be a non-negative integer, the following sum over all partitions of $m$ can be shown to equal $1$ independently of the value of $m$, 
$$
\sum_{\substack{k \\ \sum{i.y_{k,i}}=m}}{\prod_{i=1}^{m}{\frac{1}{i^{y_{k,i}}(y_{k,i})!}}}
=1
.$$
\end{lemma}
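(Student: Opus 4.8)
The plan is to decompose the unrestricted sum over all partitions of $m$ according to the length $r=\sum{y_{k,i}}$ of each partition, and then to apply Lemma \ref{Lemma 4.2} length by length. Every partition of $m$ has a unique length $r$, and since a partition of $m$ is a sum of positive integers, this length ranges over $r=1,\dots,m$ (with the empty partition of length $0$ occurring only in the degenerate case $m=0$). Thus the set of all partitions of $m$ is the disjoint union, over $r$, of the sets of partitions of $m$ of length $r$; this ensures no term is counted twice and none is omitted when we stratify by length.

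First I would write the unrestricted sum as a sum over lengths,
$$
\sum_{\substack{k \\ \sum{i.y_{k,i}}=m}}{\prod_{i=1}^{m}{\frac{1}{i^{y_{k,i}}(y_{k,i})!}}}
=\sum_{r=0}^{m}{\sum_{\substack{k \\ \sum{i.y_{k,i}}=m \\ \sum{y_{k,i}}=r}}{\prod_{i=1}^{m}{\frac{1}{i^{y_{k,i}}(y_{k,i})!}}}}.
$$
Then, applying Lemma \ref{Lemma 4.2} to each inner sum replaces it by $\frac{1}{m!}{m \brack r}$, yielding
$$
\sum_{\substack{k \\ \sum{i.y_{k,i}}=m}}{\prod_{i=1}^{m}{\frac{1}{i^{y_{k,i}}(y_{k,i})!}}}
=\frac{1}{m!}\sum_{r=0}^{m}{{m \brack r}}.
$$

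Finally I would invoke the finite-sum identity for the unsigned Stirling numbers of the first kind established earlier in this section, namely $\sum_{r=0}^{m}{m \brack r}=m!$, which collapses the right-hand side to $\frac{1}{m!}\cdot m! = 1$, independently of $m$, exactly as claimed.

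There is no serious obstacle here: the argument is essentially a reindexing followed by two previously established facts (Lemma \ref{Lemma 4.2} and the Stirling row-sum identity). The only point requiring a moment of care is the book-keeping on the range of $r$, i.e.\ verifying that summing the length-$r$ identity over all admissible $r$ exactly reproduces the unrestricted sum so that every partition is accounted for precisely once. Since the constraint $\sum{y_{k,i}}=r$ merely reads off the length, and each partition has exactly one length in $\{0,1,\dots,m\}$, this is immediate, and the proof reduces to the two displayed manipulations above.
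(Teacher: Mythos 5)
Your proposal is correct and follows essentially the same route as the paper: stratify the sum over all partitions of $m$ by length $r$, apply Lemma \ref{Lemma 4.2} to each stratum to get $\frac{1}{m!}{m \brack r}$, and conclude via the row-sum identity $\sum_{r=0}^{m}{m \brack r}=m!$. No gaps.
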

\begin{proof}
From Lemma \ref{Lemma 4.2}, we have 
$$
\sum_{\substack{k \\ \sum{i.y_{k,i}}=m \\ \sum{y_{k,i}}=r}}{\prod_{i=1}^{m}{\frac{1}{i^{y_{k,i}}(y_{k,i})!}}}
=\frac{1}{m!}{m \brack r}
.$$
Hence, 
$$
\sum_{\substack{k \\ \sum{i.y_{k,i}}=m}}{\prod_{i=1}^{m}{\frac{1}{i^{y_{k,i}}(y_{k,i})!}}}
=\sum_{r=0}^{m}{\sum_{\substack{k \\ \sum{i.y_{k,i}}=m \\ \sum{y_{k,i}}=r}}{\prod_{i=1}^{m}{\frac{1}{i^{y_{k,i}}(y_{k,i})!}}}}
=\sum_{r=0}^{m}{\frac{1}{m!}{m \brack r}}
=\frac{1}{m!}\sum_{r=0}^{m}{{m \brack r}}
.$$
However, we have already shown that the finite sum of ${m \brack r}$ is equal to $m!$. Hence, 
$$
\sum_{\substack{k \\ \sum{i.y_{k,i}}=m}}{\prod_{i=1}^{m}{\frac{1}{i^{y_{k,i}}(y_{k,i})!}}}
=1
.$$
\end{proof}
A more general form of Lemma \ref{Lemma 4.2} is illustrated in the following lemma.  
\begin{lemma} \label{Lemma 4.4}
Let $(\varphi_1, \cdots , \varphi_m)$ be a partition of $\varphi \leq m$ such that $ \sum{\varphi_i}=r_{\varphi}$. Let $(y_{k,1}, \cdots , y_{k,m})=\{(y_{1,1}, \cdots , y_{1,m}),(y_{2,1}, \cdots , y_{2,m}), \cdots \}$ be the set of all partitions of $m$. 
$$
\sum_{\substack{k \\ \sum{i.y_{k,i}}=m \\ \sum{y_{k,i}}=r}}{\prod_{i=1}^{m}{\frac{\binom{y_{k,i}}{\varphi_i}}{i^{y_{k,i}}(y_{k,i})!}}}
=\sum_{\substack{k \\ \sum{i.y_{k,i}}=m \\ \sum{y_{k,i}}=r \\ y_{k,i} \geq \varphi_i}}{\prod_{i=1}^{m}{\frac{\binom{y_{k,i}}{\varphi_i}}{i^{y_{k,i}}(y_{k,i})!}}}
=\frac{1}{(m-\varphi)!}{m-\varphi \brack r-r_{\varphi}} \prod_{i=1}^{m}{\frac{1}{i^{\varphi_{i}} (\varphi_{i})!}}
.$$
\begin{remark}
{\em Knowing that the largest element of a partition of $\varphi$ is $\varphi$, we can rewrite it as follows }
$$
\sum_{\substack{k \\ \sum{i.y_{k,i}}=m \\ \sum{y_{k,i}}=r}}{\prod_{i=1}^{m}{\frac{\binom{y_{k,i}}{\varphi_i}}{i^{y_{k,i}}(y_{k,i})!}}}
=\sum_{\substack{k \\ \sum{i.y_{k,i}}=m \\ \sum{y_{k,i}}=r \\ y_{k,i} \geq \varphi_i}}{\prod_{i=1}^{m}{\frac{\binom{y_{k,i}}{\varphi_i}}{i^{y_{k,i}}(y_{k,i})!}}}
=\frac{1}{(m-\varphi)!}{m-\varphi \brack r-r_{\varphi}} \prod_{i=1}^{\varphi}{\frac{1}{i^{\varphi_{i}} (\varphi_{i})!}}
.$$
\end{remark}
\end{lemma}
\begin{proof}
Knowing that $\binom{n}{k}$ is zero if $n<k$, then $\binom{y_{k,i}}{\varphi_i}=0$ if $ \exists i \in \mathbb{N}, y_{k,i}<\varphi_i$. Hence, 
$$
\sum_{\substack{k \\ \sum{i.y_{k,i}}=m \\ \sum{y_{k,i}}=r}}{\prod_{i=1}^{m}{\frac{\binom{y_{k,i}}{\varphi_i}}{i^{y_{k,i}}(y_{k,i})!}}}
=\sum_{\substack{k \\ \sum{i.y_{k,i}}=m \\ \sum{y_{k,i}}=r \\ \exists i, y_{k,i}<\varphi_i}}{\prod_{i=1}^{m}{\frac{\binom{y_{k,i}}{\varphi_i}}{i^{y_{k,i}}(y_{k,i})!}}}
+\sum_{\substack{k \\ \sum{i.y_{k,i}}=m \\ \sum{y_{k,i}}=r \\ y_{k,i} \geq \varphi_i}}{\prod_{i=1}^{m}{\frac{\binom{y_{k,i}}{\varphi_i}}{i^{y_{k,i}}(y_{k,i})!}}}
=\sum_{\substack{k \\ \sum{i.y_{k,i}}=m \\ \sum{y_{k,i}}=r \\ y_{k,i} \geq \varphi_i}}{\prod_{i=1}^{m}{\frac{\binom{y_{k,i}}{\varphi_i}}{i^{y_{k,i}}(y_{k,i})!}}}
.$$
The first part of the proof is complete. 
\begin{equation*}
\begin{split}
\sum_{\substack{k \\ \sum{i.y_{k,i}}=m \\ \sum{y_{k,i}}=r}}{\prod_{i=1}^{m}{\frac{\binom{y_{k,i}}{\varphi_i}}{i^{y_{k,i}}(y_{k,i})!}}}
&=\sum_{\substack{k \\ \sum{i.y_{k,i}}=m \\ \sum{y_{k,i}}=r}}{\prod_{i=1}^{m}{\frac{1}{i^{y_{k,i}}(y_{k,i})!}.\frac{y_{k,i}!}{\varphi_i! (y_{k,i}-\varphi_i)!}}}\\
&=\sum_{\substack{k \\ \sum{i.y_{k,i}}=m \\ \sum{y_{k,i}}=r}}{\prod_{i=1}^{m}{\frac{1}{i^{y_{k,i}}}.\frac{1}{\varphi_i! (y_{k,i}-\varphi_i)!}}} \\
&=\sum_{\substack{k \\ \sum{i.y_{k,i}}=m \\ \sum{y_{k,i}}=r}}{\prod_{i=1}^{m}{\frac{1}{i^{\varphi_i} \varphi_i!}.\frac{1}{i^{y_{k,i}-\varphi_i} (y_{k,i}-\varphi_i)!}}} \\
&=\sum_{\substack{k \\ \sum{i.y_{k,i}}=m \\ \sum{y_{k,i}}=r}}{\prod_{i=1}^{m}{\frac{1}{i^{\varphi_i} \varphi_i!}}\prod_{i=1}^{m}{\frac{1}{i^{y_{k,i}-\varphi_i} (y_{k,i}-\varphi_i)!}}}
.\end{split}
\end{equation*}
As $\varphi_1, \cdots , \varphi_m$ are all constants then $\prod_{i=1}^{m}{\frac{1}{i^{\varphi_i} \varphi_i!}}$ is constant. This factor is constant and is common to all terms of the sum, therefore, we can factor it and take it outside the sum. 
$$
\sum_{\substack{k \\ \sum{i.y_{k,i}}=m \\ \sum{y_{k,i}}=r}}{\prod_{i=1}^{m}{\frac{\binom{y_{k,i}}{\varphi_i}}{i^{y_{k,i}}(y_{k,i})!}}}
=\left(\prod_{i=1}^{m}{\frac{1}{i^{\varphi_i} \varphi_i!}}\right) \sum_{\substack{k \\ \sum{i.y_{k,i}}=m \\ \sum{y_{k,i}}=r}}{\prod_{i=1}^{m}{\frac{1}{i^{y_{k,i}-\varphi_i} (y_{k,i}-\varphi_i)!}}}
.$$ 
Having that $(\varphi_1, \cdots , \varphi_m)$ is a partition of $\varphi \leq m$, hence, $\sum{i.\varphi_i}=\varphi \leq m$. Thus, the condition $\sum{i.y_{k,i}}=m$ can be replaced by $\sum{i.(y_{k,i}-\varphi_i)}=\sum{i.y_{k,i}}-\sum{i.\varphi_i}=m-\varphi$. Similarly, $r_{\varphi}=\sum{\varphi_i}$, hence, the condition $\sum{y_{k,i}}=r$ can be replaced by $\sum{(y_{k,i}-\varphi_i)}=\sum{y_{k,i}}-\sum{\varphi_i}=r-r_{\varphi}$. Hence, 
$$
\sum_{\substack{k \\ \sum{i.y_{k,i}}=m \\ \sum{y_{k,i}}=r}}{\prod_{i=1}^{m}{\frac{\binom{y_{k,i}}{\varphi_i}}{i^{y_{k,i}}(y_{k,i})!}}}
=\left(\prod_{i=1}^{m}{\frac{1}{i^{\varphi_i} \varphi_i!}}\right) \sum_{\substack{k \\ \sum{i.(y_{k,i}-\varphi_i)}=m-\varphi \\ \sum{(y_{k,i}-\varphi)}=r-r_{\varphi}}}{\prod_{i=1}^{m}{\frac{1}{i^{y_{k,i}-\varphi_i} (y_{k,i}-\varphi_i)!}}}
.$$ 
Let $Y_{k,i}=y_{k,i}-\varphi_i$, 
$$
\sum_{\substack{k \\ \sum{i.y_{k,i}}=m \\ \sum{y_{k,i}}=r}}{\prod_{i=1}^{m}{\frac{\binom{y_{k,i}}{\varphi_i}}{i^{y_{k,i}}(y_{k,i})!}}}
=\left(\prod_{i=1}^{m}{\frac{1}{i^{\varphi_i} \varphi_i!}}\right) \sum_{\substack{k \\ \sum{i.Y_{k,i}}=m-\varphi \\ \sum{Y_{k,i}}=r-r_{\varphi}}}{\prod_{i=1}^{m}{\frac{1}{i^{Y_{k,i}} Y_{k,i}!}}}
.$$
Knowing that the largest element of a partition of $(m-\varphi)$ is $(m-\varphi)$, hence,   
$$
\sum_{\substack{k \\ \sum{i.y_{k,i}}=m \\ \sum{y_{k,i}}=r}}{\prod_{i=1}^{m}{\frac{\binom{y_{k,i}}{\varphi_i}}{i^{y_{k,i}}(y_{k,i})!}}}
=\left(\prod_{i=1}^{m}{\frac{1}{i^{\varphi_i} \varphi_i!}}\right) \sum_{\substack{k \\ \sum{i.Y_{k,i}}=m-\varphi \\ \sum{Y_{k,i}}=r-r_{\varphi}}}{\prod_{i=1}^{m-\varphi}{\frac{1}{i^{Y_{k,i}} Y_{k,i}!}}}
.$$
Applying Lemma \ref{Lemma 4.2}, with $y_{k,i}$ substituted by $Y_{k,i}$, $m$ substituted by $m-\varphi$, and $r$ substituted by $r-r_{\varphi}$, we get 
$$
\sum_{\substack{k \\ \sum{i.y_{k,i}}=m \\ \sum{y_{k,i}}=r}}{\prod_{i=1}^{m}{\frac{\binom{y_{k,i}}{\varphi_i}}{i^{y_{k,i}}(y_{k,i})!}}}
=\left( \prod_{i=1}^{m}{\frac{1}{i^{\varphi_{i}} \varphi_{i}!}} \right)\frac{1}{(m-\varphi)!}{m-\varphi \brack r-r_{\varphi}}
.$$
The proof is complete. 
\end{proof}
\begin{remark}
{\em If $\varphi>m$, then $\sum{i.Y_{k,i}}=m-\varphi<0$ which makes Lemma \ref{Lemma 4.2} invalid which then makes this lemma invalid. }
\end{remark}
Similarly, a more general form of Lemma \ref{Lemma 4.3} is illustrated in the following lemma.  
\begin{lemma} \label{Lemma 4.5}
Let $(y_{k,1}, \cdots , y_{k,m})=\{(y_{1,1}, \cdots , y_{1,m}),(y_{2,1}, \cdots , y_{2,m}), \cdots \}$ be the set of all partitions of $m$. Let $(\varphi_1, \cdots , \varphi_m)$ be a partition of $r \leq m$.
$$
\sum_{\substack{k \\ \sum{i.y_{k,i}}=m}}{\prod_{i=1}^{m}{\frac{\binom{y_{k,i}}{\varphi_i}}{i^{y_{k,i}}(y_{k,i})!}}}
=\sum_{\substack{k \\ \sum{i.y_{k,i}}=m \\ y_{k,i} \geq \varphi_i}}{\prod_{i=1}^{m}{\frac{\binom{y_{k,i}}{\varphi_i}}{i^{y_{k,i}}(y_{k,i})!}}}
=\prod_{i=1}^{m}{\frac{1}{i^{\varphi_{i}} (\varphi_{i})!}}
.$$
\begin{remark}
{\em Knowing that the largest element of a partition of $r$ is $r$, we can rewrite it as follows }
$$
\sum_{\substack{k \\ \sum{i.y_{k,i}}=m}}{\prod_{i=1}^{m}{\frac{\binom{y_{k,i}}{\varphi_i}}{i^{y_{k,i}}(y_{k,i})!}}}
=\sum_{\substack{k \\ \sum{i.y_{k,i}}=m \\ y_{k,i} \geq \varphi_i}}{\prod_{i=1}^{m}{\frac{\binom{y_{k,i}}{\varphi_i}}{i^{y_{k,i}}(y_{k,i})!}}}
=\prod_{i=1}^{r}{\frac{1}{i^{\varphi_{i}} (\varphi_{i})!}}
.$$
\end{remark}
\end{lemma}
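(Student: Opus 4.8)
The plan is to deduce Lemma \ref{Lemma 4.5} from Lemma \ref{Lemma 4.4} in exactly the same manner that Lemma \ref{Lemma 4.3} was deduced from Lemma \ref{Lemma 4.2}: namely by summing the length-stratified identity over all admissible partition lengths and collapsing the resulting Stirling sum to a factorial. The statement contains two equalities, and I would treat them in turn. Throughout, write $r_\varphi=\sum{\varphi_i}$ for the length of the fixed partition $(\varphi_1,\dots,\varphi_m)$, which partitions $r\le m$, i.e. $\sum{i.\varphi_i}=r$.

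First I would dispatch the left equality by the vanishing of binomial coefficients, precisely as in the opening step of the proof of Lemma \ref{Lemma 4.4}. Since $\binom{y_{k,i}}{\varphi_i}=0$ whenever $y_{k,i}<\varphi_i$, every partition $(y_{k,1},\dots,y_{k,m})$ of $m$ having some coordinate with $y_{k,i}<\varphi_i$ contributes a zero term; discarding these vanishing terms leaves exactly the sum restricted to partitions with $y_{k,i}\ge\varphi_i$ for all $i$, which is the middle expression.

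For the main (right) equality, I would stratify the unrestricted sum over partitions of $m$ according to their length $\ell=\sum{y_{k,i}}$, writing
\begin{equation*}
\sum_{\substack{k \\ \sum{i.y_{k,i}}=m}}{\prod_{i=1}^{m}{\frac{\binom{y_{k,i}}{\varphi_i}}{i^{y_{k,i}}(y_{k,i})!}}}
=\sum_{\ell}{\,\sum_{\substack{k \\ \sum{i.y_{k,i}}=m \\ \sum{y_{k,i}}=\ell}}{\prod_{i=1}^{m}{\frac{\binom{y_{k,i}}{\varphi_i}}{i^{y_{k,i}}(y_{k,i})!}}}}.
\end{equation*}
Applying Lemma \ref{Lemma 4.4} to each inner sum (with Lemma \ref{Lemma 4.4}'s partitioned integer taken to be $r$ and its length parameter taken to be $\ell$) converts the right-hand side into
\begin{equation*}
\left(\prod_{i=1}^{m}{\frac{1}{i^{\varphi_i}(\varphi_i)!}}\right)\frac{1}{(m-r)!}\sum_{\ell}{{m-r \brack \ell-r_\varphi}},
\end{equation*}
since the product factor is a constant common to every term. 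Substituting $s=\ell-r_\varphi$, the remaining sum is $\sum_{s=0}^{m-r}{{m-r \brack s}}=(m-r)!$ by the finite sum of unsigned Stirling numbers of the first kind recorded earlier in the excerpt; the factor $(m-r)!$ then cancels $1/(m-r)!$, leaving $\prod_{i=1}^{m}\frac{1}{i^{\varphi_i}(\varphi_i)!}$, as claimed.

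The individual steps are routine once Lemma \ref{Lemma 4.4} is available, so there is no deep obstacle; the one point genuinely demanding care is the bookkeeping of the length index, compounded by the notational clash between the integer $r$ being partitioned by $(\varphi_1,\dots,\varphi_m)$ and the length variable of the $y$-partition (which is why I keep the latter as the separate symbol $\ell$). I would verify that as $\ell$ ranges over all attainable lengths, the shifted index $s=\ell-r_\varphi$ sweeps exactly $\{0,\dots,m-r\}$, the set of lengths of partitions of $m-r$, so that the Stirling sum closes to the full factorial $(m-r)!$ rather than a partial sum. The convention ${m-r \brack s}=0$ for $s$ outside $\{0,\dots,m-r\}$ makes extending the range of $\ell$ harmless and lets me quote the finite-sum identity verbatim.
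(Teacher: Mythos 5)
Your proposal is correct, but it takes a genuinely different route from the paper. The paper proves Lemma \ref{Lemma 4.5} directly, by repeating the manipulation used for Lemma \ref{Lemma 4.4}: expand $\binom{y_{k,i}}{\varphi_i}$, factor out the constant $\prod_{i}{i^{-\varphi_i}/\varphi_i!}$, substitute $Y_{k,i}=y_{k,i}-\varphi_i$ so that the remaining sum runs over partitions of $m-r$, and then invoke Lemma \ref{Lemma 4.3} (with $m$ replaced by $m-r$) to collapse that sum to $1$. You instead stratify by length and sum Lemma \ref{Lemma 4.4} over all admissible lengths $\ell$, reducing the problem to the identity $\sum_{s}{{m-r \brack s}}=(m-r)!$. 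Both routes are sound, and in fact they make the square of lemmas commute: the paper obtains \ref{Lemma 4.3} from \ref{Lemma 4.2} by length-summation and \ref{Lemma 4.5} from \ref{Lemma 4.3} by the shift argument, while you obtain \ref{Lemma 4.5} from \ref{Lemma 4.4} by length-summation (Lemma \ref{Lemma 4.4} itself being the shift of Lemma \ref{Lemma 4.2}). Your version is more economical in that it reuses Lemma \ref{Lemma 4.4} rather than replaying its proof, at the cost of the index bookkeeping you rightly flag: one must check that $s=\ell-r_\varphi$ covers all of $\{0,\dots,m-r\}$ as $\ell$ ranges over attainable lengths (it does, since $r_\varphi\geq 1$ and $m-r+r_\varphi\leq m$), with the convention that ${m-r \brack s}$ vanishes outside that range absorbing the stray values of $\ell$. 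The paper's direct argument avoids the Stirling machinery entirely at this stage, since Lemma \ref{Lemma 4.3} already packages the normalization $\sum_{k}{\prod_i{1/(i^{y_{k,i}}y_{k,i}!)}}=1$. Your handling of the first equality (vanishing binomial coefficients) is identical to the paper's.
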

\begin{proof}
Knowing that $\binom{n}{k}$ is zero if $n<k$, then $\binom{y_{k,i}}{\varphi_i}=0$ if $ \exists i \in \mathbb{N}, y_{k,i}<\varphi_i$. Hence, 
$$
\sum_{\substack{k \\ \sum{i.y_{k,i}}=m}}{\prod_{i=1}^{m}{\frac{\binom{y_{k,i}}{\varphi_i}}{i^{y_{k,i}}(y_{k,i})!}}}
=\sum_{\substack{k \\ \sum{i.y_{k,i}}=m \\ \exists i, y_{k,i}<\varphi_i}}{\prod_{i=1}^{m}{\frac{\binom{y_{k,i}}{\varphi_i}}{i^{y_{k,i}}(y_{k,i})!}}}
+\sum_{\substack{k \\ \sum{i.y_{k,i}}=m \\ y_{k,i} \geq \varphi_i}}{\prod_{i=1}^{m}{\frac{\binom{y_{k,i}}{\varphi_i}}{i^{y_{k,i}}(y_{k,i})!}}}
=\sum_{\substack{k \\ \sum{i.y_{k,i}}=m \\ y_{k,i} \geq \varphi_i}}{\prod_{i=1}^{m}{\frac{\binom{y_{k,i}}{\varphi_i}}{i^{y_{k,i}}(y_{k,i})!}}}
.$$
The first part of the proof is complete. 
\begin{equation*}
\begin{split}
\sum_{\substack{k \\ \sum{i.y_{k,i}}=m}}{\prod_{i=1}^{m}{\frac{\binom{y_{k,i}}{\varphi_i}}{i^{y_{k,i}}(y_{k,i})!}}}
&=\sum_{\substack{k \\ \sum{i.y_{k,i}}=m}}{\prod_{i=1}^{m}{\frac{1}{i^{y_{k,i}}(y_{k,i})!}.\frac{y_{k,i}!}{\varphi_i! (y_{k,i}-\varphi_i)!}}}\\
&=\sum_{\substack{k \\ \sum{i.y_{k,i}}=m}}{\prod_{i=1}^{m}{\frac{1}{i^{y_{k,i}}}.\frac{1}{\varphi_i! (y_{k,i}-\varphi_i)!}}} \\
&=\sum_{\substack{k \\ \sum{i.y_{k,i}}=m}}{\prod_{i=1}^{m}{\frac{1}{i^{\varphi_i} \varphi_i!}.\frac{1}{i^{y_{k,i}-\varphi_i} (y_{k,i}-\varphi_i)!}}} \\
&=\sum_{\substack{k \\ \sum{i.y_{k,i}}=m}}{\prod_{i=1}^{m}{\frac{1}{i^{\varphi_i} \varphi_i!}}\prod_{i=1}^{m}{\frac{1}{i^{y_{k,i}-\varphi_i} (y_{k,i}-\varphi_i)!}}}
.\end{split}
\end{equation*}
As $\varphi_1, \cdots , \varphi_m$ are all constants then $\prod_{i=1}^{m}{\frac{1}{i^{\varphi_i} \varphi_i!}}$ is constant. This factor is constant and is common to all terms of the sum, therefore, we can factor it and take it outside the sum. 
$$
\sum_{\substack{k \\ \sum{i.y_{k,i}}=m}}{\prod_{i=1}^{m}{\frac{\binom{y_{k,i}}{\varphi_i}}{i^{y_{k,i}}(y_{k,i})!}}}
=\left(\prod_{i=1}^{m}{\frac{1}{i^{\varphi_i} \varphi_i!}}\right) \sum_{\substack{k \\ \sum{i.y_{k,i}}=m}}{\prod_{i=1}^{m}{\frac{1}{i^{y_{k,i}-\varphi_i} (y_{k,i}-\varphi_i)!}}}
.$$ 
Having that $(\varphi_1, \cdots , \varphi_m)$ is a partition of $r \leq m$, hence, $\sum{i.\varphi_i}=r \leq m$. Thus, the condition $\sum{i.y_{k,i}}=m$ can be replaced by $\sum{i.(y_{k,i}-\varphi_i)}=\sum{i.y_{k,i}}-\sum{i.\varphi_i}=m-r(\geq 0)$. Hence, 
$$
\sum_{\substack{k \\ \sum{i.y_{k,i}}=m}}{\prod_{i=1}^{m}{\frac{\binom{y_{k,i}}{\varphi_i}}{i^{y_{k,i}}(y_{k,i})!}}}
=\left(\prod_{i=1}^{m}{\frac{1}{i^{\varphi_i} \varphi_i!}}\right) \sum_{\substack{k \\ \sum{i.(y_{k,i}-\varphi_i)}=m-r}}{\prod_{i=1}^{m}{\frac{1}{i^{y_{k,i}-\varphi_i} (y_{k,i}-\varphi_i)!}}}
.$$ 
Let $Y_{k,i}=y_{k,i}-\varphi_i$, 
$$
\sum_{\substack{k \\ \sum{i.y_{k,i}}=m}}{\prod_{i=1}^{m}{\frac{\binom{y_{k,i}}{\varphi_i}}{i^{y_{k,i}}(y_{k,i})!}}}
=\left(\prod_{i=1}^{m}{\frac{1}{i^{\varphi_i} \varphi_i!}}\right) \sum_{\substack{k \\ \sum{i.Y_{k,i}}=m-r}}{\prod_{i=1}^{m}{\frac{1}{i^{Y_{k,i}} Y_{k,i}!}}}
.$$ 
Knowing that the largest element of a partition of $(m-r)$ is $(m-r)$, hence,   
$$
\sum_{\substack{k \\ \sum{i.y_{k,i}}=m}}{\prod_{i=1}^{m}{\frac{\binom{y_{k,i}}{\varphi_i}}{i^{y_{k,i}}(y_{k,i})!}}}
=\left(\prod_{i=1}^{m}{\frac{1}{i^{\varphi_i} \varphi_i!}}\right) \sum_{\substack{k \\ \sum{i.Y_{k,i}}=m-r}}{\prod_{i=1}^{m-r}{\frac{1}{i^{Y_{k,i}} Y_{k,i}!}}}
.$$ 
Applying Lemma \ref{Lemma 4.3}, with $y_{k,i}$ substituted by $Y_{k,i}$ and $m$ substituted by $m-r$, we get 
$$
\sum_{\substack{k \\ \sum{i.y_{k,i}}=m}}{\prod_{i=1}^{m}{\frac{\binom{y_{k,i}}{\varphi_i}}{i^{y_{k,i}}(y_{k,i})!}}}
=\left( \prod_{i=1}^{m}{\frac{1}{i^{\varphi_{i}} \varphi_{i}!}} \right)
.$$
The proof is complete. 
\end{proof}
\begin{remark}
{\em If $r>m$, then $\sum{i.Y_{k,i}}=m-r<0$ which makes Lemma \ref{Lemma 4.3} invalid which then makes this lemma invalid.}
\end{remark}
\begin{proposition} \label{Proposition 4.1}
Let $B_{m,r}(x_1, \cdots , x_{m-r+1})$ be the partial Bell polynomial and $B_m(x_1, \cdots , x_m)$ be the complete Bell polynomial, 
$$
\sum_{\sum{i.y_{k,i}}=m}{\prod_{i=1}^{m}{\frac{1}{(y_{k,i})!} \left( \frac{1}{i}\sum_{N=q}^{n}{(a_N)^i }\right)^{y_{k,i}}}}
=\frac{1}{m!}\sum_{r=0}^{m}{B_{m,r}(x_1, \cdots , x_{m-r+1})}
=\frac{1}{m!}B_m(x_1, \cdots , x_m)
$$
where $x_i=(i-1)!(\sum_{N=q}^{n}{(a_N)^i})$. 
\end{proposition}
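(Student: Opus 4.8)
The plan is to prove both equalities by unwinding the compact definition of the partial Bell polynomial and then re-indexing the sum over partitions according to length. To lighten the notation, I would first set $S_i=\sum_{N=q}^{n}{(a_N)^i}$, so that the hypothesis $x_i=(i-1)!\,(\sum_{N=q}^{n}{(a_N)^i})$ reads $x_i=(i-1)!\,S_i$ and hence $\frac{x_i}{i!}=\frac{S_i}{i}$. This single observation is what glues the two sides together: the left-hand side is assembled from factors $\left(\frac{1}{i}S_i\right)^{y_{k,i}}$, whereas the Bell polynomial is assembled from factors $\left(\frac{x_i}{i!}\right)^{y_{k,i}}$, and these coincide term by term.

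Next I would start from the compact form of the partial Bell polynomial recalled in the proof of Lemma~\ref{Lemma 4.2}, namely
$$
B_{m,r}(x_1,\cdots,x_{m-r+1})
=m!\sum_{\substack{k \\ \sum{i.y_{k,i}}=m \\ \sum{y_{k,i}}=r}}{\prod_{i=1}^{m-r+1}{\frac{1}{y_{k,i}!}\left(\frac{x_i}{i!}\right)^{y_{k,i}}}}.
$$
Dividing by $m!$ and substituting $\frac{x_i}{i!}=\frac{S_i}{i}$ turns each summand into $\prod_{i=1}^{m-r+1}\frac{1}{y_{k,i}!}\left(\frac{S_i}{i}\right)^{y_{k,i}}$. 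By Lemma~\ref{Lemma 4.1}, any partition of $m$ of length $r$ has all parts at most $m-r+1$, so $y_{k,i}=0$ for $i>m-r+1$; each such missing factor equals $1$, and the product may therefore be extended up to $i=m$ without changing its value. This gives
$$
\frac{1}{m!}B_{m,r}
=\sum_{\substack{k \\ \sum{i.y_{k,i}}=m \\ \sum{y_{k,i}}=r}}{\prod_{i=1}^{m}{\frac{1}{y_{k,i}!}\left(\frac{S_i}{i}\right)^{y_{k,i}}}}.
$$

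To reach the first equality I would then sum over $r$ from $0$ to $m$. Every partition of $m$ has a well-defined length $\sum{y_{k,i}}\in\{0,\dots,m\}$, so the constraints $\sum{i.y_{k,i}}=m$ together with $\sum{y_{k,i}}=r$, ranged over all $r$, partition the set of all partitions of $m$ into disjoint length-classes. Summing the previous display over $r$ therefore merely drops the length constraint:
$$
\frac{1}{m!}\sum_{r=0}^{m}{B_{m,r}}
=\sum_{\sum{i.y_{k,i}}=m}{\prod_{i=1}^{m}{\frac{1}{y_{k,i}!}\left(\frac{S_i}{i}\right)^{y_{k,i}}}},
$$
which is precisely the left-hand side. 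Finally, the second equality is immediate from the standard definition of the complete Bell polynomial as the sum of its partial components, $B_m(x_1,\dots,x_m)=\sum_{r=0}^{m}B_{m,r}(x_1,\dots,x_{m-r+1})$, whence $\frac{1}{m!}\sum_{r=0}^{m}B_{m,r}=\frac{1}{m!}B_m$. The only genuine bookkeeping is the extension of the product's upper index via Lemma~\ref{Lemma 4.1} and the recognition that summing over all lengths $r$ reassembles the full sum over partitions of $m$; there is no analytic difficulty, and everything reduces to a careful re-indexing.
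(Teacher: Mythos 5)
Your proof is correct and follows essentially the same route as the paper's: both identify the summands with the compact form of the partial Bell polynomial via the observation $\frac{x_i}{i!}=\frac{1}{i}\sum_{N=q}^{n}(a_N)^i$, use Lemma \ref{Lemma 4.1} to reconcile the product's upper index, sum over the length $r$ to recover the full sum over partitions, and invoke the definition of the complete Bell polynomial. The only difference is cosmetic: you work from the Bell polynomial toward the partition sum, while the paper goes in the opposite direction.
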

\begin{proof}
From Lemma \ref{Lemma 4.1}, we can write 
$$
\sum_{\substack{\sum{i.y_{k,i}}=m \\ \sum{y_{k,i}}=r}}{\prod_{i=1}^{m}{\frac{1}{(y_{k,i})!} \left( \frac{1}{i}\sum_{N=q}^{n}{(a_N)^i }\right)^{y_{k,i}}}}
=\sum_{\substack{\sum{i.y_{k,i}}=m \\ \sum{y_{k,i}}=r}}{\prod_{i=1}^{m-r+1}{\frac{1}{(y_{k,i})!} \left( \frac{1}{i}\sum_{N=q}^{n}{(a_N)^i }\right)^{y_{k,i}}}}
.$$
We can notice that the right side term of the previous expression corresponds to a multiple of a special value of the partial Bell polynomial where $x_i=(i-1)!(\sum_{N=q}^{n}{(a_N)^i}), \forall i \in [1,m]$. Hence, 
$$
\sum_{\substack{\sum{i.y_{k,i}}=m \\ \sum{y_{k,i}}=r}}{\prod_{i=1}^{m}{\frac{1}{(y_{k,i})!} \left( \frac{1}{i}\sum_{N=q}^{n}{(a_N)^i }\right)^{y_{k,i}}}}
=\frac{1}{m!}B_{m,r}(x_1, \cdots , x_{m-r+1})
.$$
Additionally, the sum over the partitions of $m$ is equivalent to the sum for $r$ going from $0$ to $m$ of the sums over the  partitions of $m$ of length $r$. Thus, 
\begin{dmath*}
\sum_{\sum{i.y_{k,i}}=m}{\prod_{i=1}^{m}{\frac{1}{(y_{k,i})!} \left( \frac{1}{i}\sum_{N=q}^{n}{(a_N)^i }\right)^{y_{k,i}}}}
=\sum_{r=0}^{m}{\sum_{\substack{\sum{i.y_{k,i}}=m \\ \sum{y_{k,i}}=r}}{\prod_{i=1}^{m}{\frac{1}{(y_{k,i})!} \left( \frac{1}{i}\sum_{N=q}^{n}{(a_N)^i }\right)^{y_{k,i}}}}}
=\frac{1}{m!}\sum_{r=0}^{m}{B_{m,r}(x_1, \cdots , x_{m-r+1})}
.\end{dmath*}
Applying the definition of a complete Bell polynomial, we get 
 \begin{dmath*}
\sum_{\sum{i.y_{k,i}}=m}{\prod_{i=1}^{m}{\frac{1}{(y_{k,i})!} \left( \frac{1}{i}\sum_{N=q}^{n}{(a_N)^i }\right)^{y_{k,i}}}}
=\frac{1}{m!}B_m(x_1, \cdots , x_m)
.\end{dmath*}
\end{proof}
Now that all the required lemmas have been proven, we show the following theorem which allows the representation of a recurrent sum in terms of non-recurrent sums.  
\begin{theorem}[Reduction Theorem] \label{Theorem 4.1}
Let $m$ be a non-negative integer, $k$ be the index of the $k$-th partition of $m$ $(1 \leq k \leq p(m))$, $i$ be an integer between $1$ and $m$, and $y_{k,i}$ be the multiplicity of $i$ in the $k$-th partition of $m$. The reduction theorem for recurrent sums is stated as follow:
$$\sum_{N_m=q}^{n}{\cdots \sum_{N_1=q}^{N_2}{a_{N_m}\cdots a_{N_1}}}
=\sum_{\substack{k \\ \sum{i.y_{k,i}}=m}}{\prod_{i=1}^{m}{\frac{1}{(y_{k,i})!} \left( \frac{1}{i}\sum_{N=q}^{n}{(a_N)^i }\right)^{y_{k,i}}}}
.$$
\end{theorem}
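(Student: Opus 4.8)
The plan is to fix the lower bound $q$ and induct on the upper bound $n$, using the variation formula (Corollary \ref{Corollary 2.1}) to pass from $n$ to $n+1$ and the partition identity of Lemma \ref{Lemma 4.3} to collapse the resulting partition sums. Throughout, abbreviate $\mathcal P_i=\sum_{N=q}^{n}(a_N)^i$ for the power sums and write $T_m(n)=\sum_{\sum i\,y_{k,i}=m}\prod_{i=1}^{m}\frac{1}{(y_{k,i})!}(\mathcal P_i/i)^{y_{k,i}}$ for the right-hand side, so that the claim is $\hat R_{m,q,n}=T_m(n)$ for every $m\ge 0$. The statement is quantified over all orders $m$ at each fixed $n$, which is what makes the single-variable induction on $n$ go through, since Corollary \ref{Corollary 2.1} expresses $\hat R_{m,q,n+1}$ purely in terms of the lower-level quantities $\hat R_{k,q,n}$.

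For the base case $n=q$, each power sum reduces to $\mathcal P_i=(a_q)^i$, so every term of $T_m(q)$ carries the factor $\prod_i (a_q)^{i\,y_{k,i}}=(a_q)^{\sum i\,y_{k,i}}=(a_q)^m$; pulling this common factor out leaves $(a_q)^m\sum_{\sum i\,y_{k,i}=m}\prod_i \frac{1}{i^{y_{k,i}}(y_{k,i})!}$, which equals $(a_q)^m$ by Lemma \ref{Lemma 4.3}. Since the constraint $q\le N_1\le\cdots\le N_m\le q$ forces $N_1=\cdots=N_m=q$, we also have $\hat R_{m,q,q}=(a_q)^m$, so the two sides agree.

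For the inductive step, assume $\hat R_{j,q,n}=T_j(n)$ for all $j$. By Corollary \ref{Corollary 2.1} we have $\hat R_{m,q,n+1}=\sum_{k=0}^m (a_{n+1})^{m-k}\hat R_{k,q,n}$, so it suffices to show that $T_m(n+1)$, regarded as a polynomial in $a_{n+1}$, has $T_{m-\varphi}(n)$ as the coefficient of $(a_{n+1})^{\varphi}$ for each $\varphi$. Passing from $n$ to $n+1$ replaces each $\mathcal P_i$ by $\mathcal P_i+(a_{n+1})^i$; expanding every factor $\big(\frac{\mathcal P_i+(a_{n+1})^i}{i}\big)^{y_{k,i}}$ by the binomial theorem and selecting an exponent $\varphi_i\le y_{k,i}$ in each factor produces the monomial $(a_{n+1})^{\sum i\varphi_i}$. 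Collecting all contributions with $\sum i\varphi_i=\varphi$ and substituting $Y_i=y_{k,i}-\varphi_i$, the identity $\frac{1}{(y_{k,i})!}\binom{y_{k,i}}{\varphi_i}=\frac{1}{\varphi_i!\,Y_i!}$ together with $i^{-y_{k,i}}=i^{-\varphi_i}i^{-Y_i}$ decouples the sum into $\big(\prod_i \frac{1}{i^{\varphi_i}\varphi_i!}\big)\sum_{\sum iY_i=m-\varphi}\prod_i \frac{1}{Y_i!}(\mathcal P_i/i)^{Y_i}$, whose second factor is exactly $T_{m-\varphi}(n)$. Summing over all partitions $(\varphi_i)$ of $\varphi$ and invoking Lemma \ref{Lemma 4.3} (with $\varphi$ in place of $m$) to evaluate $\sum_{(\varphi_i)\,\vdash\,\varphi}\prod_i \frac{1}{i^{\varphi_i}\varphi_i!}=1$ yields the coefficient $T_{m-\varphi}(n)$. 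By the induction hypothesis this equals $\hat R_{m-\varphi,q,n}$, so the two expansions in $a_{n+1}$ agree term by term and the induction closes.

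The main obstacle is the bookkeeping in the inductive step: one must reindex the doubly-indexed family $\{(y_{k,i}),(\varphi_i)\}$ so that the binomial expansions of the $m$ separate factors combine into a single sum over sub-partitions $(\varphi_i)$, and check that the shift $Y_i=y_{k,i}-\varphi_i$ carries the constraint $\sum i\,y_{k,i}=m$ with $y_{k,i}\ge\varphi_i$ bijectively onto $\sum iY_i=m-\varphi$ with $Y_i\ge 0$. This is precisely the manipulation already performed in the proofs of Lemmas \ref{Lemma 4.4} and \ref{Lemma 4.5}, so one may cite those computations to justify the decoupling rather than repeat it. As an alternative route, the identity can be obtained through generating functions: the left-hand side is the complete homogeneous symmetric polynomial $h_m(a_q,\dots,a_n)$, and its generating function $\prod_{N=q}^n(1-a_Nt)^{-1}=\exp\!\big(\sum_{i\ge1}\frac{\mathcal P_i}{i}t^i\big)$ returns the right-hand side upon expanding the exponential and reading off the coefficient of $t^m$ (equivalently via Proposition \ref{Proposition 4.1} and the complete Bell polynomial); I would nonetheless keep the inductive argument as the primary proof, since it stays within the elementary, self-contained framework the paper has built.
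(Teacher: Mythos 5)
Your proof is correct and follows essentially the same route as the paper: induction on the upper bound $n$ uniformly in $m$, with the base case handled by Lemma \ref{Lemma 4.3}, the inductive step by binomial expansion of each $\bigl(\mathcal P_i+(a_{n+1})^i\bigr)^{y_{k,i}}$ and the shift $Y_i=y_{k,i}-\varphi_i$, and the conclusion via Corollary \ref{Corollary 2.1}. The only cosmetic difference is that you attach the power of $a_{n+1}$ to the $\varphi_i$'s and collapse that factor with Lemma \ref{Lemma 4.3}, whereas the paper packages the same reindexing inside Lemma \ref{Lemma 4.5} and applies it to the complementary half.
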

\begin{proof}
1. Base Case: verify true for $n=q$, $\forall m \in \mathbb{N}$.
\begin{equation*}
\begin{split}
\sum_{\sum{i.y_{k,i}}=m}{\prod_{i=1}^{m}{\frac{1}{(y_{k,i})!} \left( \frac{1}{i}\sum_{N=q}^{q}{(a_N)^i }\right)^{y_{k,i}}}}
&=\sum_{\sum{i.y_{k,i}}=m}{\prod_{i=1}^{m}{\frac{1}{(y_{k,i})!i^{y_{k,i}}} \left(a_q\right)^{i.y_{k,i}}}} \\
&=\sum_{\sum{i.y_{k,i}}=m}{\left(a_q\right)^{\sum{i.y_{k,i}}}\prod_{i=1}^{m}{\frac{1}{(y_{k,i})!i^{y_{k,i}}}}} \\
&=\left(a_q\right)^{m}\sum_{\sum{i.y_{k,i}}=m}{\prod_{i=1}^{m}{\frac{1}{(y_{k,i})!i^{y_{k,i}}} }}
.\end{split}
\end{equation*}
By applying Lemma \ref{Lemma 4.3}, we get 
$$
\sum_{\sum{i.y_{k,i}}=m}{\prod_{i=1}^{m}{\frac{1}{(y_{k,i})!} \left( \frac{1}{i}\sum_{N=q}^{q}{(a_N)^i }\right)^{y_{k,i}}}}
=\left(a_q\right)^{m}
.$$
Likewise, 
$$
\sum_{N_m=q}^{q}{\cdots \sum_{N_1=q}^{N_2}{a_{N_m}\cdots a_{N_1}}}
=a_q \cdots a_q
=\left(a_q\right)^{m}
.$$
2. Induction hypothesis: assume the statement is true until $n$, $\forall m \in \mathbb{N}$.
$$\sum_{N_m=q}^{n}{\cdots \sum_{N_1=q}^{N_2}{a_{N_m}\cdots a_{N_1}}}
=\sum_{\sum{i.y_{k,i}}=m}{\prod_{i=1}^{m}{\frac{1}{(y_{k,i})!} \left( \frac{1}{i}\sum_{N=q}^{n}{(a_N)^i }\right)^{y_{k,i}}}}
.$$
3. Induction step: we will show that this statement is true for $(n+1)$, $\forall m \in \mathbb{N}$. \\
We have to show the following statement to be true: 
$$\sum_{N_m=q}^{n+1}{\cdots \sum_{N_1=q}^{N_2}{a_{N_m}\cdots a_{N_1}}}
=\sum_{\sum{i.y_{k,i}}=m}{\prod_{i=1}^{m}{\frac{1}{(y_{k,i})!} \left( \frac{1}{i}\sum_{N=q}^{n+1}{(a_N)^i }\right)^{y_{k,i}}}}
.$$
$ \\ $
$$
\sum_{\sum{i.y_{k,i}}=m}{\prod_{i=1}^{m}{\frac{1}{(y_{k,i})!} \left( \frac{1}{i}\sum_{N=q}^{n+1}{(a_N)^i }\right)^{y_{k,i}}}}
=\sum_{\sum{i.y_{k,i}}=m}{\prod_{i=1}^{m}{\frac{1}{(y_{k,i})!i^{y_{k,i}}} \left(\sum_{N=q}^{n}{(a_N)^i}+(a_{n+1})^i\right)^{y_{k,i}}}}
.$$
The binomial theorem states that 
$$
(a+b)^n=\sum_{\varphi=0}^{n}{\binom{n}{\varphi}a^{n-\varphi}b^{\varphi}}
.$$
Hence, 
\begin{equation*}
\begin{split}
\left(\sum_{N=q}^{n}{(a_N)^i}+(a_{n+1})^i\right)^{y_{k,i}}
&=\sum_{\varphi=0}^{y_{k,i}}{\binom{y_{k,i}}{\varphi}{\left(\sum_{N=q}^{n}{(a_N)^i}\right)}^{\varphi}{\left((a_{n+1})^i\right)}^{y_{k,i}-\varphi}}.
\end{split}
\end{equation*}
Thus, 
\begin{dmath*}
\sum_{\sum{i.y_{k,i}}=m}{\prod_{i=1}^{m}{\frac{1}{(y_{k,i})!} \left( \frac{1}{i}\sum_{N=q}^{n+1}{(a_N)^i }\right)^{y_{k,i}}}}
=\sum_{\sum{i.y_{k,i}}=m}{\prod_{i=1}^{m}{\frac{1}{(y_{k,i})!i^{y_{k,i}}}\sum_{\varphi=0}^{y_{k,i}}{\binom{y_{k,i}}{\varphi}{\left(\sum_{N=q}^{n}{(a_N)^i}\right)}^{\varphi}{\left((a_{n+1})^i\right)}^{y_{k,i}-\varphi}}}}
=\sum_{\sum{i.y_{k,i}}=m}{\prod_{i=1}^{m}{\sum_{\varphi=0}^{y_{k,i}}{\frac{1}{(y_{k,i})!i^{y_{k,i}}}\binom{y_{k,i}}{\varphi}{\left(\sum_{N=q}^{n}{(a_N)^i}\right)}^{\varphi}{\left(a_{n+1}\right)}^{i.y_{k,i}-i.\varphi}}}}
.\end{dmath*}
Let $A_{\varphi,i,k}={\frac{1}{(y_{k,i})!i^{y_{k,i}}}\binom{y_{k,i}}{\varphi}{\left(\sum_{N=q}^{n}{(a_N)^i}\right)}^{\varphi}{\left(a_{n+1}\right)}^{i.y_{k,i}-i.\varphi}}$.
By expanding then regrouping, it can be seen that 
$$
\prod_{i=1}^{m}{\sum_{\varphi=0}^{y_{k,i}}{A_{\varphi,i,k}}}
=\sum_{\varphi_{m}=0}^{y_{k,m}} {\cdots \sum_{\varphi_{1}=0}^{y_{k,1}}{\prod_{i=1}^{m}{A_{\varphi_{i},i,k}}}}
.$$
This is because, for any given $k$, by expanding the product of sums (the left hand side term), we will get a sum of products of the form $A_{\varphi_1,1}A_{\varphi_2,2} \cdots A_{\varphi_m,m}$ ($\prod_{i=1}^{m}{A_{\varphi_i,i}}$) for all combinations of $\varphi_1,\varphi_2, \cdots ,\varphi_m$ such that $0 \leq \varphi_1 \leq y_{k,1}, \cdots ,0 \leq \varphi_m \leq y_{k,m}$, which is equivalent to the right hand side term. \\
Hence, 
\begin{dmath*}
\sum_{\sum{i.y_{k,i}}=m}{\prod_{i=1}^{m}{\frac{1}{(y_{k,i})!} \left( \frac{1}{i}\sum_{N=q}^{n+1}{(a_N)^i }\right)^{y_{k,i}}}}
=\sum_{\sum{i.y_{k,i}}=m}{\sum_{\varphi_{m}=0}^{y_{k,m}}{\cdots \sum_{\varphi_{1}=0}^{y_{k,1}}{\prod_{i=1}^{m}{\frac{1}{(y_{k,i})!i^{y_{k,i}}}\binom{y_{k,i}}{\varphi_{i}}{\left(\sum_{N=q}^{n}{(a_N)^i}\right)}^{\varphi_{i}}{\left(a_{n+1}\right)}^{i.y_{k,i}-i.\varphi_{i}}}}}}
.\end{dmath*}
A more compact way of writing the repeated sum over the $\varphi_i$'s is by expressing it with one sum that combines all the conditions. The set of conditions $0\leq \varphi_1 \leq y_{k,1}, \cdots ,0\leq \varphi_m \leq y_{k,m}$ can be expressed as the condition $0 \leq \varphi_i \leq y_{k,i}$ for $i \in [1,m]$. 
\begin{dmath*}
\sum_{\sum{i.y_{k,i}}=m}{\prod_{i=1}^{m}{\frac{1}{(y_{k,i})!} \left( \frac{1}{i}\sum_{N=q}^{n+1}{(a_N)^i }\right)^{y_{k,i}}}}
=\sum_{\sum{i.y_{k,i}}=m}{\sum_{0 \leq \varphi_{i} \leq y_{k,i}}{\prod_{i=1}^{m}{\frac{1}{(y_{k,i})!i^{y_{k,i}}}\binom{y_{k,i}}{\varphi_{i}}{\left(\sum_{N=q}^{n}{(a_N)^i}\right)}^{\varphi_{i}}{\left(a_{n+1}\right)}^{i.y_{k,i}-i.\varphi_{i}}}}}
.\end{dmath*}
Similarly, let $j$ represent $\sum{i.\varphi_{i}}$. Hence, we can add the trivial condition that is $j=\sum{i.\varphi_{i}}$ to the sum over $\varphi_i$. Additionally, \\
$\sum{i.\varphi_{i}}=j$ is minimal when $\varphi_1=0, \cdots , \varphi_m=0$. Hence $j_{min}=0$. \\  
$\sum{i.\varphi_{i}}=j$ is maximal when $\varphi_1=y_{k,1}, \cdots , \varphi_m=y_{k,m}$. Hence $j_{max}=\sum{i.y_{k,i}}=m$. \\ 
Therefore, we have that $0 \leq j \leq m$ or equivalently that $j$ can go from $0$ to $m$. Hence, knowing that adding a true statement to a condition does not change the condition, we can add this additional condition to get 
\begin{dmath*}
\sum_{\sum{i.y_{k,i}}=m}{\prod_{i=1}^{m}{\frac{1}{(y_{k,i})!} \left( \frac{1}{i}\sum_{N=q}^{n+1}{(a_N)^i }\right)^{y_{k,i}}}}
=\sum_{\sum{i.y_{k,i}}=m}{\sum_{\substack{j=0 \\ \sum {i.\varphi_i}=j \\ 0 \leq \varphi_{i} \leq y_{k,i}}}^{m}{ \prod_{i=1}^{m}{\frac{1}{(y_{k,i})!i^{y_{k,i}}}\binom{y_{k,i}}{\varphi_{i}}{\left(\sum_{N=q}^{n}{(a_N)^i}\right)}^{\varphi_{i}}{\left(a_{n+1}\right)}^{i.y_{k,i}-i.\varphi_{i}}}}}
.\end{dmath*}
Knowing that $\binom{y_{k,i}}{\varphi_i}=0$ if $\varphi_i>y_{k,i}$, hence, the terms produced for $\varphi_i>y_{k,i}$ would be zero. Thus, we can remove the condition $0 \leq \varphi_i \leq y_{k,i}$ because terms that do not satisfy this condition will be zeros and, therefore, would not change the value of the sum. 
\begin{dmath*}
\sum_{\sum{i.y_{k,i}}=m}{\prod_{i=1}^{m}{\frac{1}{(y_{k,i})!} \left( \frac{1}{i}\sum_{N=q}^{n+1}{(a_N)^i }\right)^{y_{k,i}}}}
=\sum_{\sum{i.y_{k,i}}=m}{\sum_{\substack{j=0 \\ \sum {i.\varphi_i}=j}}^{m}{ \prod_{i=1}^{m}{\frac{1}{(y_{k,i})!i^{y_{k,i}}}\binom{y_{k,i}}{\varphi_{i}}{\left(\sum_{N=q}^{n}{(a_N)^i}\right)}^{\varphi_{i}}{\left(a_{n+1}\right)}^{i.y_{k,i}-i.\varphi_{i}}}}}
.\end{dmath*}
We expand the expression then, from all values of $k$ (from every partitions $(y_{k,1}, \cdots , y_{k,m})$ of $m$), we regroup together the terms having a combination of exponents $(\varphi_1, \cdots , \varphi_m)$ that forms a partition of the same integer $j$ and we do so $\forall j \in [0,m]$. 
Hence, performing this manipulation allows us to interchange the sum over $k$ (over $\sum_i{i.y_{k,i}}=m$) with the sums over $j$.
Thus, the expression becomes as follows, 
\begin{dmath*}
\sum_{\sum{i.y_{k,i}}=m}{\prod_{i=1}^{m}{\frac{1}{(y_{k,i})!} \left( \frac{1}{i}\sum_{N=q}^{n+1}{(a_N)^i }\right)^{y_{k,i}}}}
=\sum_{\substack{j=0 \\ \sum {i.\varphi_i}=j}}^{m}{\sum_{\substack{\sum{i.y_{k,i}}=m}}{ \prod_{i=1}^{m}{\frac{1}{(y_{k,i})!i^{y_{k,i}}}\binom{y_{k,i}}{\varphi_{i}}{\left(\sum_{N=q}^{n}{(a_N)^i}\right)}^{\varphi_{i}}{\left(a_{n+1}\right)}^{i.y_{k,i}-i.\varphi_{i}}}}}
=\sum_{\substack{j=0 \\ \sum {i.\varphi_i}=j}}^{m}{\sum_{\substack{\sum{i.y_{k,i}}=m}}{{\left(a_{n+1}\right)}^{\sum{i.y_{k,i}}-\sum{i.\varphi_{i}}}\left[\prod_{i=1}^{m}{{\left(\sum_{N=q}^{n}{(a_N)^i}\right)}^{\varphi_{i}}}\right]\left[\prod_{i=1}^{m}{\frac{1}{(y_{k,i})!i^{y_{k,i}}}\binom{y_{k,i}}{\varphi_{i}}}\right]}}
=\sum_{\substack{j=0 \\ \sum {i.\varphi_i}=j}}^{m}{{\left(a_{n+1}\right)}^{m-j}\left[\prod_{i=1}^{m}{{\left(\sum_{N=q}^{n}{(a_N)^i}\right)}^{\varphi_{i}}}\right]\left(\sum_{\substack{\sum{i.y_{k,i}}=m}}{\prod_{i=1}^{m}{\frac{1}{(y_{k,i})!i^{y_{k,i}}}\binom{y_{k,i}}{\varphi_{i}}}}\right)}
.\end{dmath*}
Applying Lemma \ref{Lemma 4.5}, we get
\begin{dmath*}
\sum_{\sum{i.y_{k,i}}=m}{\prod_{i=1}^{m}{\frac{1}{(y_{k,i})!} \left( \frac{1}{i}\sum_{N=q}^{n+1}{(a_N)^i }\right)^{y_{k,i}}}}
=\sum_{\substack{j=0 \\ \sum {i.\varphi_i}=j}}^{m}{{\left(a_{n+1}\right)}^{m-j}\left[\prod_{i=1}^{m}{{\left(\sum_{N=q}^{n}{(a_N)^i}\right)}^{\varphi_{i}}}\right]\left(\prod_{i=1}^{m}{\frac{1}{i^{\varphi_{i}} (\varphi_{i})!}}\right)}
=\sum_{\substack{j=0 \\ \sum {i.\varphi_i}=j}}^{m}{{\left(a_{n+1}\right)}^{m-j}\left(\prod_{i=1}^{m}{\frac{1}{i^{\varphi_{i}} (\varphi_{i})!}}{\left(\sum_{N=q}^{n}{(a_N)^i}\right)}^{\varphi_{i}}\right)}
.\end{dmath*} 
Knowing that for any given value of $j$ there is  multiple combinations of $\varphi_1, \cdots , \varphi_m$ that satisfy $\sum{i.\varphi_i}=j$. Hence, every value of $j$ corresponds to a sum of the sum's argument for all partitions of $j$ (for all combinations of $\varphi_1, \cdots , \varphi_m$ satisfying $\sum{i.\varphi_i}=j$). Therefore, we can split the outer sum with two conditions into two sums each with one of the conditions as follows, 
\begin{dmath*}
\sum_{\sum{i.y_{k,i}}=m}{\prod_{i=1}^{m}{\frac{1}{(y_{k,i})!} \left( \frac{1}{i}\sum_{N=q}^{n+1}{(a_N)^i }\right)^{y_{k,i}}}}
=\sum_{j=0}^{m}{{\left(a_{n+1}\right)}^{m-j}\sum_{\sum {i.\varphi_i}=j}{\left(\prod_{i=1}^{m}{\frac{1}{i^{\varphi_{i}} (\varphi_{i})!}}{\left(\sum_{N=q}^{n}{(a_N)^i}\right)}^{\varphi_{i}}\right)}}
.\end{dmath*} 
Knowing that the largest element of a partition of $j$ is $j$, 
\begin{dmath*}
\sum_{\sum{i.y_{k,i}}=m}{\prod_{i=1}^{m}{\frac{1}{(y_{k,i})!} \left( \frac{1}{i}\sum_{N=q}^{n+1}{(a_N)^i }\right)^{y_{k,i}}}}
=\sum_{j=0}^{m}{{\left(a_{n+1}\right)}^{m-j}\left(\sum_{\sum {i.\varphi_i}=j}{\prod_{i=1}^{j}{\frac{1}{i^{\varphi_{i}} (\varphi_{i})!}}{\left(\sum_{N=q}^{n}{(a_N)^i}\right)}^{\varphi_{i}}}\right)}
.\end{dmath*} 
By using the induction hypothesis, the expression becomes 
$$
\sum_{\sum{i.y_{k,i}}=m}{\prod_{i=1}^{m}{\frac{1}{(y_{k,i})!} \left( \frac{1}{i}\sum_{N=q}^{n+1}{(a_N)^i }\right)^{y_{k,i}}}}
=\sum_{j=0}^{m}{\left(a_{n+1}\right)^{m-j}\left(\sum_{N_j=q}^{n}{\cdots \sum_{N_1=q}^{N_2}{a_{N_j} \cdots a_{N_1}}}\right)}
.$$
Using  Corollary \ref{Corollary 2.1}, we get 
$$
\sum_{\sum{i.y_{k,i}}=m}{\prod_{i=1}^{m}{\frac{1}{(y_{k,i})!} \left( \frac{1}{i}\sum_{N=q}^{n+1}{(a_N)^i }\right)^{y_{k,i}}}}
=\sum_{N_m=q}^{n+1}{\cdots \sum_{N_1=q}^{N_2}{a_{N_m}\cdots a_{N_1}}}
.$$
The theorem is proven by induction. 
\end{proof}
\begin{corollary} \label{Corollary 4.1}
If the recurrent sum starts at 1, Theorem \ref{Theorem 4.1} becomes 
$$\sum_{N_m=1}^{n}{\cdots \sum_{N_1=1}^{N_2}{a_{N_m}\cdots a_{N_1}}}
=\sum_{\sum{i.y_{k,i}}=m}{\prod_{i=1}^{m}{\frac{1}{(y_{k,i})!} \left( \frac{1}{i}\sum_{N=1}^{n}{(a_N)^i }\right)^{y_{k,i}}}}
.$$
\end{corollary}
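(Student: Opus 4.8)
The plan is to obtain this identity as an immediate specialization of the Reduction Theorem (Theorem \ref{Theorem 4.1}). That theorem is established for arbitrary lower and upper bounds $q,n \in \mathbb{N}$ with $n \geq q$, so the corollary is nothing more than the instance $q = 1$. Accordingly, the proof is a direct substitution rather than a fresh induction or combinatorial argument.

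First I would observe that $q = 1$ is a permissible choice of lower bound, since $1 \in \mathbb{N}$ and any upper bound $n \geq 1$ satisfies the hypothesis $n \geq q$ required by Theorem \ref{Theorem 4.1}. Then I would substitute $q = 1$ directly into the statement of that theorem. On the left, the recurrent sum $\sum_{N_m=q}^{n}{\cdots \sum_{N_1=q}^{N_2}{a_{N_m}\cdots a_{N_1}}}$ becomes $\sum_{N_m=1}^{n}{\cdots \sum_{N_1=1}^{N_2}{a_{N_m}\cdots a_{N_1}}}$; on the right, the only dependence on $q$ enters through the inner non-recurrent sums $\sum_{N=q}^{n}{(a_N)^i}$, each of which becomes $\sum_{N=1}^{n}{(a_N)^i}$. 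Since the partition indexing (the condition $\sum{i.y_{k,i}}=m$), the factorials $(y_{k,i})!$, and the powers $i^{y_{k,i}}$ are all independent of $q$, the substitution propagates cleanly and reproduces exactly the claimed right-hand side.

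There is no genuine obstacle here: the statement is a pure restriction of an already-proven, strictly more general result, so the entire content of the proof is the single line ``apply Theorem \ref{Theorem 4.1} with $q=1$.'' The reason it merits a separate corollary is practical rather than logical, namely that the case $q=1$ is precisely the one needed in the subsequent applications to multiple harmonic star sums and to the Faulhaber-type sums of powers, where the summation indices naturally start at $1$.
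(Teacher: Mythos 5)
Your proposal is correct and matches the paper's (implicit) treatment: the corollary is stated without a separate proof precisely because it is the direct specialization $q=1$ of Theorem \ref{Theorem 4.1}, whose hypotheses are clearly satisfied. Nothing further is needed.
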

An additional partition identity that can be deduced from Theorem \ref{Theorem 4.1} is as follows.  
\begin{corollary} \label{Corollary 4.2}
For any $m,n \in \mathbb{N}$, we have that 
$$
\sum_{\sum{i.y_{k,i}}=m}{\prod_{i=1}^{m}{\frac{1}{(y_{k,i})!} \left( \frac{n}{i}\right)^{y_{k,i}}}}=\binom{n+m-1}{m}
.$$
\end{corollary}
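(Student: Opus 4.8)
The plan is to deduce this identity as a direct specialization of the Reduction Theorem (Theorem~\ref{Theorem 4.1}), or more precisely its starting-at-$1$ form Corollary~\ref{Corollary 4.1}, by choosing the sequence so that every interior power sum collapses to the constant $n$. The key observation is that the variable $n$ appears in the target identity inside the factor $\left(\frac{n}{i}\right)^{y_{k,i}}$, which is exactly what $\left(\frac{1}{i}\sum_{N}{(a_N)^i}\right)^{y_{k,i}}$ becomes if the power sum $\sum_{N}{(a_N)^i}$ happens to equal $n$ for \emph{every} $i$.

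First I would set $a_N=1$ for all $N$, take the lower bound $q=1$, and keep the upper bound $n$. With this choice $\sum_{N=1}^{n}{(a_N)^i}=\sum_{N=1}^{n}{1}=n$ for every $i\geq 1$, independently of $i$. Substituting this into the right-hand side of Corollary~\ref{Corollary 4.1}, each factor $\frac{1}{i}\sum_{N=1}^{n}{(a_N)^i}$ becomes $\frac{n}{i}$, so that side turns into exactly
$$
\sum_{\sum{i.y_{k,i}}=m}{\prod_{i=1}^{m}{\frac{1}{(y_{k,i})!}\left(\frac{n}{i}\right)^{y_{k,i}}}},
$$
which is the left-hand side of the claimed identity.

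Next I would evaluate the left-hand side of Corollary~\ref{Corollary 4.1} under the same substitution. Since every $a_{N_j}=1$, the recurrent sum reduces to
$$
\sum_{N_m=1}^{n}{\cdots \sum_{N_1=1}^{N_2}{1}}=\sum_{1\leq N_1 \leq \cdots \leq N_m \leq n}{1},
$$
which simply counts the weakly increasing $m$-tuples $(N_1,\ldots,N_m)$ with entries in $\{1,\ldots,n\}$, equivalently the multisets of size $m$ drawn from $n$ symbols. Choosing such a tuple is the same as recording how many of the $m$ indices take each of the $n$ possible values, a composition of $m$ into $n$ nonnegative parts, of which there are $\binom{n+m-1}{m}$. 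Equating the two evaluations of the recurrent sum then yields the corollary.

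There is essentially no analytic difficulty here: the entire content lies in selecting the specialization that makes the interior power sums independent of $i$, and the constant sequence $1$ is the natural choice achieving $\sum_{N=1}^{n}{(a_N)^i}=n$ for all $i$ simultaneously. I expect the main conceptual obstacle for a reader to be recognizing that this purely combinatorial multiset identity is hiding inside the analytic Reduction Theorem once the constant sequence is inserted; the only computational step needing a word of justification is the stars-and-bars evaluation of $\sum_{1\leq N_1 \leq \cdots \leq N_m \leq n}{1}$, which I would spell out as above.
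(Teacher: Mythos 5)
Your proposal is correct and follows essentially the same route as the paper: specialize the Reduction Theorem to the constant sequence $a_N=1$ and identify the resulting recurrent sum of $1$'s with $\binom{n+m-1}{m}$. The only difference is that the paper cites an external reference for the identity $\sum_{1\leq N_1 \leq \cdots \leq N_m \leq n}{1}=\binom{n+m-1}{m}$, whereas you supply the stars-and-bars count yourself, which makes your version self-contained.
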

\begin{proof}
From paper \cite{RepeatedSums}, we have the following relation, 
$$
\sum_{N_m=1}^{n}{\cdots \sum_{N_1=1}^{N_2}{1}}=\binom{n+m-1}{m}
.$$
By applying Theorem \ref{Theorem 4.1}, we get 
$$
\sum_{\sum{i.y_{k,i}}=m}{\prod_{i=1}^{m}{\frac{1}{(y_{k,i})!} \left( \frac{1}{i}\sum_{N=1}^{n}{1}\right)^{y_{k,i}}}}
=\sum_{\sum{i.y_{k,i}}=m}{\prod_{i=1}^{m}{\frac{1}{(y_{k,i})!} \left( \frac{n}{i}\right)^{y_{k,i}}}}
=\binom{n+m-1}{m}
.$$
\end{proof}
\begin{example}
{\em For $n=1$, Corollary \ref{Corollary 4.2} gives }
$$
\sum_{\sum{i.y_{k,i}}=m}{\prod_{i=1}^{m}{\frac{1}{(y_{k,i})!i^{y_{k,i}}}}}
=\binom{m}{m}=1
.$$
\end{example}
\begin{example}
{\em For $n=2$, Corollary \ref{Corollary 4.2} gives }
$$
\sum_{\sum{i.y_{k,i}}=m}{\prod_{i=1}^{m}{\frac{2^{y_{k,i}}}{(y_{k,i})!i^{y_{k,i}}}}}
=\binom{m+1}{m}=m+1
.$$
\end{example}
\begin{example}
{\em For $n=3$, Corollary \ref{Corollary 4.2} gives }
$$
\sum_{\sum{i.y_{k,i}}=m}{\prod_{i=1}^{m}{\frac{3^{y_{k,i}}}{(y_{k,i})!i^{y_{k,i}}}}}
=\binom{m+2}{m}=\frac{(m+1)(m+2)}{2}
.$$
\end{example}
\subsection{Particular cases}
In this section, we will apply the reduction formula for the cases of $m$ from $1$ to $4$. These cases were independently proven using two distinct methods (which are omitted here for simplicity). Similarly, these formulas were verified for a certain range of $n$ using a computer program which calculated the right expression as well as the left expression then checks that they are equal. 
\begin{itemize}
\item For $m=1$ 
$$
\sum_{N_1=1}^{n}{a_{N_1}}
=\sum_{N=1}^{n}{a_{N}}
.$$
\item For $m=2$ 
$$
\sum_{N_2=1}^{n}{\sum_{N_1=1}^{N_2}{a_{N_2}a_{N_1}}}
=\frac{1}{2}\left(\sum_{N=1}^{n}{a_{N}}\right)^{2}+\frac{1}{2}\left(\sum_{N=1}^{n}{\left(a_{N}\right)^{2}}\right)
.$$
\item For $m=3$ 
$$
\sum_{N_3=1}^{n}{\sum_{N_2=1}^{N_3}{\sum_{N_1=1}^{N_2}{a_{N_3}a_{N_2}a_{N_1}}}}
=\frac{1}{6}\left(\sum_{N=1}^{n}{a_{N}}\right)^{3}
+\frac{1}{2}\left(\sum_{N=1}^{n}{a_{N}}\right)\left(\sum_{N=1}^{n}{\left(a_{N}\right)^{2}}\right)
+\frac{1}{3}\left(\sum_{N=1}^{n}{\left(a_{N}\right)^{3}}\right)
.$$
\item For $m=4$ 
\begin{dmath*}
\sum_{N_4=1}^{n}{\sum_{N_3=1}^{N_4}{\sum_{N_2=1}^{N_3}{\sum_{N_1=1}^{N_2}{a_{N_4}a_{N_3}a_{N_2}a_{N_1}}}}}
=\frac{1}{24}\left(\sum_{N=1}^{n}{a_{N}}\right)^{4}
+\frac{1}{4}\left(\sum_{N=1}^{n}{a_{N}}\right)^{2}\left(\sum_{N=1}^{n}{\left(a_{N}\right)^{2}}\right)
+\frac{1}{3}\left(\sum_{N=1}^{n}{a_{N}}\right)\left(\sum_{N=1}^{n}{\left(a_{N}\right)^{3}}\right)
+\frac{1}{8}\left(\sum_{N=1}^{n}{\left(a_{N}\right)^{2}}\right)^{2}
+\frac{1}{4}\left(\sum_{N=1}^{n}{\left(a_{N}\right)^{4}}\right)
.\end{dmath*}
\end{itemize}
\subsection{General Reduction Theorem}
We define the notation $|A|$ as the number of elements in the set $A$. Note that if $A$ is a set of sets then $|A|$ represents the number of sets in $A$ as they are considered the elements of $A$. \\ 
Let $m$ be a non-negative integer and let $\{(y_{1,1}, \cdots, y_{1,m}),(y_{2,1}, \cdots, y_{2,m}), \cdots \}$ be the set of all partitions of $m$. 
Let us consider the set $M=\{1, \cdots, m\}$. 
The permutation group $S_m$ is the set of all permutations of the set $\{1, \cdots, m \}$. Let $\sigma \in S_m$ be a permutation of the set $\{1, \cdots, m \}$ and let $\sigma (i)$ represent the $i$-th element of this given permutation. The number of such permutations is given by 
\begin{equation}
|S_m|=m!
.\end{equation}
The cycle-type of a permutation $\sigma$ is the ordered set where the $i$-th element represents the number of cycles of size $i$ in the cycle decomposition of $\sigma$. The number of ways of arranging $i$ elements cyclically is $(i-1)!$. The number of possible combinations of $y_{k,i}$ cycles of size $i$ is $[(i-1)!]^{y_{k,i}}$. Hence, the number of permutations having cycle-type $(y_{k,1}, \cdots, y_{k,m})$ is given by 
\begin{equation}
\prod_{i=1}^{m}{[(i-1)!]^{y_{k,i}}}
.\end{equation}
A partition $P$ of a set $M$ is a set of non-empty disjoint subsets of $M$ such that every element of $M$ is present in exactly one of the subsets. Let $P=\{\underbrace{P_{1,1}, \cdots, P_{1,y_1}}_{y_1 \,\, sets}, \cdots, \underbrace{P_{m,1}, \cdots, P_{m,y_m}}_{y_m \,\, sets}\}$ represent a partition of a set of $m$ elements (for our purpose let it be the set $\{1, \cdots, m\}$). $P_{i,y}$ represents the $y$-th subset of order (size) $i$. $y_i$ represents the number of subsets of size $i$ contained in this partition of the set. It is interesting to note that $(y_1, \cdots, y_m)$ will always form a partition of the non-negative integer $m$. However, the number of partitions of $m$ is different from the number of partitions of a set of $m$ elements because there are more than one partition of the set of $m$ elements that can be associated with a given partition of $m$. In fact, we can easily determine that the number of partitions of a set of $m$ elements associated with the partition $(y_1, \cdots, y_m)$ is given by 
\begin{equation} \label{setpartition}
|\Omega_k|
=\frac{m!}{1!^{y_{k,1}}\cdots m!^{y_{k,m}}(y_{k,1})! \cdots (y_{k,m})!}
=\frac{m!}{\prod_{i=1}^{m}{i!^{y_{k,i}}y_{k,i}!}}
.\end{equation}  
where $\Omega_k$ is the set of all partitions of the set of $m$ elements associated the partition $(y_{k,i}, \cdots, y_{k,m})$. This is because the number of ways to divide $m$ objects into $l_1$ groups of $1$ element, $l_2$ groups of $2$ elements, $\cdots$, and $l_m$ groups of $m$ elements is given by 
\begin{equation} 
\frac{m!}{1!^{l_1} \cdots m!^{l_{m}}l_1! \cdots l_m!}
=\frac{m!}{\prod_{i=1}^{m}{i!^{l_{i}}l_i!}}
.\end{equation}  
We will denote by $\Omega$ the set of all partitions of the set of $m$ elements. 
Finally, a partition $P$ of a set $M$ is a refinement of a partition $\rho$ of the same set $M$ if every element in $P$ is a subset of an element in $\rho$. We denote this as $P \succeq \rho$. \\
Using the notation introduced, we can formulate a generalization of Theorem \ref{Theorem 4.1} where all sequences are distinct. 
\begin{theorem} \label{generalReduction}
Let $m,n,q \in \mathbb{N}$ such that $n \geq q$. Let $a_{(1);N}, \cdots , a_{(m);N}$ be $m$ sequences defined in the interval $[q,n]$.
we have that 
\begin{equation*}
\begin{split}
\sum_{\sigma \in S_m}{\left(\sum_{N_{m}=q}^{n}{\cdots \sum_{N_1=q}^{N_2}{a_{(\sigma(m));N_{m}} \cdots a_{(\sigma(1));N_{1}} }}\right)} 
=\sum_{\substack{P \in \Omega}}{\prod_{i=1}^{m}{[(i-1)!]^{y_{k,i}} \left[\prod_{g=1}^{y_{k,i}}{\left( \sum_{N=q}^{n}{\prod_{h \in P_{i,g}}{a_{(h);N}}}\right)}\right]}}
.\end{split}
\end{equation*}
\end{theorem}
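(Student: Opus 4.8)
The plan is to evaluate both sides by rewriting each as a single sum over \emph{arbitrary} functions $M:\{1,\dots,m\}\to[q,n]$, weighted by one and the same combinatorial factor, so that the two expressions become manifestly identical. Throughout, for such a function write $m_v(M)=|M^{-1}(v)|$ for the multiplicity of a value $v\in[q,n]$, and note that the product of the multiplicities over all values reconstructs the level-set structure of $M$.

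First I would reorganize the left-hand side. For a fixed $\sigma\in S_m$, I perform the change of variables $M_{\sigma(j)}=N_j$ in the recurrent sum. Since $j\mapsto\sigma(j)$ runs over all of $\{1,\dots,m\}$, this turns the constraint $q\le N_1\le\cdots\le N_m\le n$ into $M_{\sigma(1)}\le\cdots\le M_{\sigma(m)}$ and turns the summand into $\prod_{h=1}^m a_{(h);M_h}$, a quantity independent of $\sigma$. Interchanging the sum over $\sigma$ with the sum over $M$ then gives
\[
\sum_{\sigma\in S_m}\Big(\sum_{N_m=q}^{n}\cdots\sum_{N_1=q}^{N_2}a_{(\sigma(m));N_m}\cdots a_{(\sigma(1));N_1}\Big)=\sum_{M:\{1,\dots,m\}\to[q,n]}\#\{\,\sigma\in S_m: M_{\sigma(1)}\le\cdots\le M_{\sigma(m)}\,\}\prod_{h=1}^m a_{(h);M_h},
\]
where the counting factor is the number of ways to list the indices so their $M$-values are weakly increasing, namely $\prod_{v}m_v(M)!$ (free reordering inside each level set).

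Next I would reorganize the right-hand side. Regrouping the product, the summand attached to a set partition $P\in\Omega$ is $\prod_{B\in P}(|B|-1)!\big(\sum_{N=q}^n\prod_{h\in B}a_{(h);N}\big)$. Because a given set partition $P$ arises as the cycle partition of exactly $\prod_{B\in P}(|B|-1)!$ permutations of $\{1,\dots,m\}$, summing over $P\in\Omega$ with this weight is the same as summing over $\tau\in S_m$ and taking the product over the cycles of $\tau$. Expanding each factor $\sum_{N}$ as a free choice of value for the corresponding cycle converts this into $\sum_{\tau\in S_m}\sum_{M\text{ constant on cycles of }\tau}\prod_{h}a_{(h);M_h}$. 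Interchanging the two sums once more, and observing that $M$ is constant on the cycles of $\tau$ exactly when $\tau$ fixes every level set of $M$ setwise, the inner count of admissible $\tau$ for a fixed $M$ equals $\prod_v m_v(M)!$.

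Both sides have thus been written as $\sum_{M}\big(\prod_v m_v(M)!\big)\prod_{h}a_{(h);M_h}$, which establishes the identity. The substantive point, and the step I expect to demand the most care, is recognizing that the very same factor $\prod_v m_v(M)!$ appears on both sides for genuinely different reasons: on the left it counts the permutations that sort $M$ into weakly increasing order, while on the right it counts the permutations that fix each level set of $M$ setwise. Pinning down these two counts, together with the permutation/cycle-partition correspondence used to pass from a weighted sum over $\Omega$ to an unweighted sum over $S_m$, is the crux; the remaining manipulations are routine interchanges of summation.
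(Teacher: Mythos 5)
Your proposal is correct and is essentially the paper's own argument: both proofs fix a monomial $\prod_h a_{(h);M_h}$ and show its coefficient on each side equals $\prod_v m_v(M)!$, the order of the stabilizer of the index tuple, using on the right-hand side the correspondence between set partitions weighted by $\prod_B(|B|-1)!$ and permutations with that cycle partition. Your write-up merely makes the two counts (sorting permutations on the left, permutations fixing each level set on the right) more explicit than the paper's isotropy-group phrasing.
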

\begin{remark}
{\em The theorem can also be written as }
\begin{equation*}
\begin{split}
&\sum_{\sigma \in S_m}{\left(\sum_{N_{m}=q}^{n}{\cdots \sum_{N_1=q}^{N_2}{a_{(\sigma(m));N_{m}} \cdots a_{(\sigma(1));N_{1}} }}\right)} \\
&\,\,=\sum_{\substack{k \\ \sum{i.y_{k,i}}=m}}{\sum_{\Omega_{k}}{\prod_{i=1}^{m}{[(i-1)!]^{y_{k,i}} \left[\prod_{g=1}^{y_{k,i}}{\left( \sum_{N=q}^{n}{\prod_{h \in P_{i,g}}{a_{(h);N}}}\right)}\right]}}} \\
&\,\,=|S_m|\sum_{\substack{k \\ \sum{i.y_{k,i}}=m}}{\frac{1}{|\Omega_k|}\sum_{\Omega_{k}}{\prod_{i=1}^{m}{\frac{1}{y_{k,i}! i^{y_{k,i}}} \left[\prod_{g=1}^{y_{k,i}}{\left( \sum_{N=q}^{n}{\prod_{h\in P_{i,g}}{a_{(h);N}}}\right)}\right]}}} .
\end{split}
\end{equation*}
{\em As every partition of a set of $m$ elements is associated with a given partition of $m$, hence, adding up all the partitions of the set for ever given partition of $m$ is equivalent to adding up all partitions of the set. The first form is obtained by regrouping together, from the set of all partitions of the set $\{1, \cdots, m\}$, those who are associated with a given partition of $m$. \\
The second expression is obtained by noting that $\frac{|S_m|}{|\Omega_k|}\prod_{i=1}^{m}{\frac{1}{y_{k,i}!i^{y_{k,i}}}}=\prod_{i=1}^{m}{{[(i-1)!]^{y_{k,i}}}}$. These forms are shown as they can be more easily used to show that this theorem reduces to Theorem \ref{Theorem 4.1} if all sequences are the same. } 
\end{remark}
\begin{proof}
Both sides of the equation produce all combinations of terms which are products of the $m$ sequences. Hence, the strategy of this proof is to show that every combination appear with the same multiplicity on both sides. \\ We can assume the sequences to all be distinct without lost of generality. We can write 
\begin{equation*}
\sum_{\sigma \in S_m}{\left(\sum_{N_{m}=q}^{n}{\cdots \sum_{N_1=q}^{N_2}{a_{(\sigma(m));N_{m}} \cdots a_{(\sigma(1));N_{1}} }}\right)}
=\sum_{\sigma \in S_m}{\left(\sum_{N_{m}=q}^{n}{\cdots \sum_{N_1=q}^{N_2}{a_{(m);N_{\sigma(m)}} \cdots a_{(1);N_{\sigma(1)}} }}\right)}
.\end{equation*}
Hence, we can consider the symmetric group $S_m$ as acting on $N=(N_1, \cdots, N_m)$. $N=(N_1, \cdots, N_m)$ has an isotropy group $S_m(N)$ and an associated partition $\rho$ of the set of $m$ elements. The partition $\rho$ is the set of all equivalence classes of the relation given by $a \sim b$ if and only if $N_a=N_b$ and $S_m(N)=\{\sigma \in S_m \,\, | \,\, \sigma(i) \sim i \,\, \forall i\}$. Thus, 
\begin{equation} \label{seq}
a_{(m);N_m} \cdots a_{(1);N_1}
\end{equation}
appears $|S_m(N)|$ times in the expansion of the left hand side of the theorem. \\
Likewise, in the right hand side, \eqref{seq} can only appears in the terms corresponding to partitions $P$ which are refinements of $\rho$. \eqref{seq} appears 
\begin{equation}
\sum_{P \succeq \rho}{\prod_{i=1}^{m}{[(i-1)!]^{y_{k,i}}}}
\end{equation}
times in the right hand side of the theorem. Also let us notice that $[(i-1)!]^{y_{k,i}}$ corresponds to $(|P_{i,1}|-1)! \cdots (|P_{i,y_{k,i}}|-1)!$ because $|P_{i,1}|=\cdots=|P_{i,y_{k,i}}|=i$. Hence, $\prod_{i=1}^{m}{[(i-1)!]^{y_{k,i}}}$ corresponds to $\prod_{P_{h,g} \subset P}{(|P_{h,g}|-1)!}$ which is equal to the number of permutations having cycle-type specified by $P$. \\ 
Knowing that any element of $S_m(N)$ has a unique cycle-type specified by a partition that refines $\rho$, hence, we conclude that 
\begin{equation}
\sum_{P \succeq \rho}{\prod_{i=1}^{m}{[(i-1)!]^{y_{k,i}}}}
=|S_m(N)|
.\end{equation}
As both sides of the theorem produce the same terms and with the same multiplicity, we can say that these sides are equal to each other. 
\end{proof}
\begin{example}
{\em For $m=2$, Theorem \ref{generalReduction} gives the following, }
\begin{equation*}
\sum_{N_{2}=q}^{n}{ \sum_{N_1=q}^{N_2}{ a_{N_{2}}  b_{N_{1}} }}
+\sum_{N_2=q}^{n}{ \sum_{N_1=q}^{N_2}{ b_{N_{2}}  a_{N_{1}} }}
=\left(\sum_{N=q}^{n}{a_N}\right)\left(\sum_{N=q}^{n}{b_N}\right)
+\left(\sum_{N=q}^{n}{a_N b_N}\right)
.\end{equation*}
\end{example}
\begin{example}
{\em For $m=3$, Theorem \ref{generalReduction} gives the following, }
\begin{dmath*}
\sum_{\sigma \in S_3}{\left(\sum_{N_{3}=q}^{n}{\sum_{N_2=q}^{N_3}{ \sum_{N_1=q}^{N_2}{a_{(\sigma(3));N_{3}} a_{(\sigma(2));N_{2}}  a_{(\sigma(1));N_{1}} }}}\right)}
=\left(\sum_{N=q}^{n}{a_{(1);N}}\right)\left(\sum_{N=q}^{n}{a_{(2);N}}\right)\left(\sum_{N=q}^{n}{a_{(3);N}}\right)
+\left(\sum_{N=q}^{n}{a_{(1);N}}\right)\left(\sum_{N=q}^{n}{a_{(2);N} a_{(3);N}}\right)
+\left(\sum_{N=q}^{n}{a_{(2);N}}\right)\left(\sum_{N=q}^{n}{a_{(1);N} a_{(3);N}}\right)
+\left(\sum_{N=q}^{n}{a_{(3);N}}\right)\left(\sum_{N=q}^{n}{a_{(1);N} a_{(2);N}}\right)
+2\left(\sum_{N=q}^{n}{a_{(1);N} a_{(2);N} a_{(3);N}}\right)
.\end{dmath*}
\end{example}
\subsection{Example applications}
In this section, we will apply the reduction formula presented in Theorem \ref{Theorem 4.1} to simplify certain special recurrent sums. The first special sum that we will simplify is a recurrent sum of $N^p$ which will produce a recurrent form of the Faulhaber formula. The second special sum is the recurrent harmonic series as well as the recurrent $p$-series for positive even values of $p$. 
\subsubsection{Recurrent Faulhaber Formula}
The Faulhaber formula is a formula developed by Faulhaber in a 1631 edition of Academia Algebrae \cite{Faulhaber} to calculate sums of powers $(N^p)$. The Faulhaber formula is as follows 
\begin{equation}
\sum_{N=1}^{n}{N^p}=\frac{1}{p+1}\sum_{j=0}^{p}{(-1)^j\binom{p+1}{j}B_{j}n^{p+1-j}}
\end{equation} 
where $B_j$ are the Bernoulli numbers of the first kind. 
\begin{remark}
{\em See \cite{nielsen1923traite} for details on the history of Bernoulli numbers. }
\end{remark}
In this section, we will use the reduction formula for recurrent sums to develop a formula for a recurrent form of the Faulhaber formula. 

\begin{theorem} \label{Theorem 4.2}
For any $m,n,p \in \mathbb{N}$, we have that 
\begin{equation*}
\begin{split}
\sum_{N_m=1}^{n}{\cdots \sum_{N_1=1}^{N_2}{{N_m}^p\cdots {N_1}^p}}
&=\sum_{\sum{i.y_{k,i}}=m}{\prod_{i=1}^{m}{\frac{1}{(y_{k,i})!i^{y_{k,i}}} \left( \sum_{N=1}^{n}{N^{ip}}\right)^{y_{k,i}}}} \\
&=\sum_{\sum{i.y_{k,i}}=m}{\prod_{i=1}^{m}{\frac{1}{(y_{k,i})!i^{y_{k,i}}} \left(\frac{n^{ip+1}}{ip+1}\sum_{j=0}^{ip}{(-1)^j\binom{ip+1}{j}\frac{B_{j}}{n^{j}}} \right)^{y_{k,i}}}} \\
\end{split}
\end{equation*}
where $B_j$ are the Bernoulli numbers of the first kind. 
\end{theorem}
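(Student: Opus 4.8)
The plan is to obtain both displayed equalities as direct specializations of results already proven in the paper: the first is an instance of the reduction theorem applied to the sequence $a_N = N^p$, and the second follows by inserting the Faulhaber formula into each inner power sum.

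First I would invoke Corollary \ref{Corollary 4.1} (the reduction theorem for a recurrent sum starting at $1$) with the choice $a_N = N^p$. For this sequence we have $(a_N)^i = (N^p)^i = N^{ip}$, so that $\sum_{N=1}^{n}{(a_N)^i} = \sum_{N=1}^{n}{N^{ip}}$. Substituting into the right-hand side of Corollary \ref{Corollary 4.1} and pulling the constant $\left(\tfrac{1}{i}\right)^{y_{k,i}} = \tfrac{1}{i^{y_{k,i}}}$ out of the $y_{k,i}$-th power yields
$$\sum_{N_m=1}^{n}{\cdots \sum_{N_1=1}^{N_2}{{N_m}^p\cdots {N_1}^p}} = \sum_{\sum{i.y_{k,i}}=m}{\prod_{i=1}^{m}{\frac{1}{(y_{k,i})!i^{y_{k,i}}} \left( \sum_{N=1}^{n}{N^{ip}}\right)^{y_{k,i}}}},$$
which is exactly the first stated equality.

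Next I would evaluate each inner power sum $\sum_{N=1}^{n}{N^{ip}}$ using the Faulhaber formula with $p$ replaced by $ip$, namely
$$\sum_{N=1}^{n}{N^{ip}} = \frac{1}{ip+1}\sum_{j=0}^{ip}{(-1)^j\binom{ip+1}{j}B_{j}\,n^{ip+1-j}}.$$
Factoring the common power $n^{ip+1}$ out of the sum over $j$ rewrites this as $\frac{n^{ip+1}}{ip+1}\sum_{j=0}^{ip}{(-1)^j\binom{ip+1}{j}\frac{B_{j}}{n^{j}}}$, since $n^{ip+1-j} = n^{ip+1}\,n^{-j}$. Substituting this closed form for $\sum_{N=1}^{n}{N^{ip}}$ into every factor of the product on the right-hand side of the first equality gives the second stated equality, completing the argument.

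Because each step is a straightforward substitution into a previously established identity, there is no genuine obstacle here; the only points requiring care are the two pieces of bookkeeping already highlighted: moving the factor $1/i^{y_{k,i}}$ outside the $y_{k,i}$-th power when specializing Corollary \ref{Corollary 4.1}, and extracting $n^{ip+1}$ from the Faulhaber sum so that the exponents $ip+1-j$ become $-j$. Both are routine algebraic manipulations and the validity of the Faulhaber substitution for the exponent $ip$ is immediate since $ip \in \mathbb{N}$ whenever $i,p \in \mathbb{N}$.
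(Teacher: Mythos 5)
Your proposal is correct and follows the paper's own argument exactly: the paper proves this theorem by applying Theorem \ref{Theorem 4.1} with $a_N=N^p$ and then substituting Faulhaber's formula for each inner power sum, which is precisely what you do (you merely spell out the routine bookkeeping that the paper leaves implicit).
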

\begin{proof}
This theorem is obtained by applying Theorem \ref{Theorem 4.1} and then applying Faulhaber's formula. 
\end{proof}
\begin{corollary} \label{Corollary 4.3}
For $p=1$, Theorem \ref{Theorem 4.2} becomes 
\begin{equation*}
\begin{split}
\sum_{N_m=1}^{n}{\cdots \sum_{N_1=1}^{N_2}{{N_m}\cdots {N_1}}}
&=\sum_{\sum{i.y_{k,i}}=m}{\prod_{i=1}^{m}{\frac{1}{(y_{k,i})!i^{y_{k,i}}} \left( \sum_{N=1}^{n}{N^{i}}\right)^{y_{k,i}}}} \\
&=\sum_{\sum{i.y_{k,i}}=m}{\prod_{i=1}^{m}{\frac{1}{(y_{k,i})!i^{y_{k,i}}} \left(\frac{n^{i+1}}{i+1}\sum_{j=0}^{i}{(-1)^j\binom{i+1}{j}\frac{B_{j}}{n^{j}}} \right)^{y_{k,i}}}} 
.\end{split}
\end{equation*}
Where $B_j$ are the Bernoulli numbers of the first kind.
\end{corollary}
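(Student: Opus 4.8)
The plan is to obtain this corollary directly as the $p=1$ instance of Theorem \ref{Theorem 4.2}, which has already been established for every $p \in \mathbb{N}$. Setting $p=1$ there replaces each factor $N_j^{\,p}$ by $N_j$, each inner power $N^{ip}$ by $N^{i}$, and the Faulhaber exponent $ip+1$ by $i+1$; the two displayed equalities of the corollary are then exactly what Theorem \ref{Theorem 4.2} asserts. In this sense the corollary requires nothing beyond a substitution.

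Should one prefer a self-contained argument, I would instead invoke the Reduction Theorem (Theorem \ref{Theorem 4.1}) with the specific sequence $a_N = N$. Since $(a_N)^i = N^i$ under this choice, Theorem \ref{Theorem 4.1} gives
$$
\sum_{N_m=1}^{n}{\cdots \sum_{N_1=1}^{N_2}{N_m \cdots N_1}}
= \sum_{\sum{i.y_{k,i}}=m}{\prod_{i=1}^{m}{\frac{1}{(y_{k,i})!}\left(\frac{1}{i}\sum_{N=1}^{n}{N^{i}}\right)^{y_{k,i}}}},
$$
and extracting the constant factor $i^{-y_{k,i}}$ from the power yields the first equality of the corollary. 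For the second equality I would apply Faulhaber's formula to each inner power sum $\sum_{N=1}^{n}{N^{i}}$, rewriting it as $\frac{n^{i+1}}{i+1}\sum_{j=0}^{i}{(-1)^j \binom{i+1}{j} \frac{B_j}{n^{j}}}$, and substitute this expression back into the product over $i$.

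I do not expect any genuine obstacle here: all the combinatorial content is already carried by Theorem \ref{Theorem 4.1}, and the power-sum evaluation is precisely Faulhaber's formula. The only point deserving minor care is the bookkeeping of the exponent $i+1$ as $i$ ranges over the parts of each partition, ensuring the Faulhaber expansion is applied with the correct power for every factor appearing in the product.
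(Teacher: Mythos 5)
Your proposal is correct and matches the paper's route exactly: the corollary is obtained by substituting $p=1$ into Theorem \ref{Theorem 4.2}, and your alternative self-contained argument (Theorem \ref{Theorem 4.1} with $a_N=N$ followed by Faulhaber's formula) is precisely how the paper proves Theorem \ref{Theorem 4.2} itself. No gaps.
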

Let us now consider a few particular cases: 
\begin{itemize}
\item For $m=2$ 
\begin{equation*}
\begin{split}
&\sum_{N_2=1}^{n}{\sum_{N_1=1}^{N_2}{{N_2}^p {N_1}^p}} \\
&\,\,=\frac{1}{2}\left(\sum_{N=1}^{n}{{N}^p}\right)^{2}+\frac{1}{2}\left(\sum_{N=1}^{n}{{N}^{2p}}\right) \\
&\,\,=\frac{1}{2}\left[\left(\frac{n^{p+1}}{p+1}\sum_{j=0}^{p}{(-1)^j\binom{p+1}{j}\frac{B_{j}}{n^{j}}} \right)^{2}
+\left(\frac{n^{2p+1}}{2p+1}\sum_{j=0}^{2p}{(-1)^j\binom{2p+1}{j}\frac{B_{j}}{n^{j}}} \right)\right].
\end{split}
\end{equation*}
\begin{example}
{\em For $p=1$, by applying this theorem and exploiting Faulhaber's formula, we can get the following formula }
$$
\sum_{N_2=1}^{n}{\sum_{N_1=1}^{N_2}{{N_2}{N_1}}}
=\frac{n(n+1)(n+2)(3n+1)}{24}
.$$
\end{example}
\begin{example}
{\em For $p=2$, by applying this theorem and exploiting Faulhaber's formula, we can get the following formula }
$$
\sum_{N_2=1}^{n}{\sum_{N_1=1}^{N_2}{{N_2}^2 {N_1}^2}}
=\frac{n(n+1)(n+2)(2n+1)(2n+3)(5n-1)}{360}
.$$
\end{example}
\item For $m=3$  
$$
\sum_{N_3=1}^{n}{\sum_{N_2=1}^{N_3}{\sum_{N_1=1}^{N_2}{{N_3}^p{N_2}^p{N_1}^p}}}
=\frac{1}{6}\left(\sum_{N=1}^{n}{a_{N}}\right)^{3}
+\frac{1}{2}\left(\sum_{N=1}^{n}{a_{N}}\right)\left(\sum_{N=1}^{n}{\left(a_{N}\right)^{2}}\right)
+\frac{1}{3}\left(\sum_{N=1}^{n}{\left(a_{N}\right)^{3}}\right)
.$$
\begin{example}
{\em For $p=1$, by applying this theorem and exploiting Faulhaber's formula, we can get the following formula }
$$
\sum_{N_3=1}^{n}{\sum_{N_2=1}^{N_3}{\sum_{N_1=1}^{N_2}{{N_3}{N_2}{N_1}}}}
=\frac{n^2(n+1)^2(n+2)(n+3)}{48}
=\left(\sum_{N=1}^{n}{N}\right)\left[\frac{n(n+1)(n+2)(n+3)}{4!}\right]
.$$
\end{example}
\end{itemize}
\subsubsection{Recurrent p-series and harmonic series}
In this section, using the formula developed by Euler and the reduction theorem (Theorem \ref{Theorem 4.1}), we will prove an expression which can be used to calculate a recurrent form of the zeta function for positive even values. Then we will conjecture a solution for a more general form of the Basel problem.\\  

We start by applying Theorem \ref{Theorem 4.1} and using the expression of the zeta function for positive even values to get an expression for the recurrent series of $\frac{1}{N^{2p}}$ (or recurrent harmonic series). 
\begin{theorem} \label{Theorem 4.3}
For any $m,p \in \mathbb{N}$, we have that 
\begin{equation*}
\begin{split}
\sum_{N_{m}=1}^{\infty}{\cdots \sum_{N_1=1}^{N_2}{\frac{1}{N_{m}^{2p} \cdots N_{1}^{2p}}}}
&=\sum_{\sum{i.y_{k,i}}=m}{\prod_{i=1}^{m}{\frac{1}{(y_{k,i})!i^{y_{k,i}}} \left(\zeta(2ip)\right)^{y_{k,i}}}} \\
&=(-1)^{pm}(2\pi)^{2pm}\sum_{\sum{i.y_{k,i}}=m}{\prod_{i=1}^{m}{\frac{(-1)^{y_{k,i}}}{(y_{k,i})!} \left(\frac{B_{2ip}}{(2i)(2ip)!}\right)^{y_{k,i}}}}
.\end{split}
\end{equation*}
\end{theorem}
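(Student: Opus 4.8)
The plan is to obtain both equalities directly from the Reduction Theorem together with Euler's closed form for the zeta function at positive even integers. First I would apply Corollary \ref{Corollary 4.1} (the reduction formula for sums starting at $1$) with the sequence $a_N=\frac{1}{N^{2p}}$, noting that $(a_N)^i=\frac{1}{N^{2ip}}$. For finite upper bound $n$ this gives
$$\sum_{N_m=1}^{n}{\cdots \sum_{N_1=1}^{N_2}{\frac{1}{N_m^{2p}\cdots N_1^{2p}}}}=\sum_{\sum{i.y_{k,i}}=m}{\prod_{i=1}^{m}{\frac{1}{(y_{k,i})!}\left(\frac{1}{i}\sum_{N=1}^{n}{\frac{1}{N^{2ip}}}\right)^{y_{k,i}}}}.$$
Since the sum over partitions of $m$ is finite and each inner factor $\sum_{N=1}^{n}\frac{1}{N^{2ip}}$ converges to $\zeta(2ip)$ as $n\to\infty$ (every exponent satisfies $2ip\geq 2$), the limit passes term by term through the finite product and finite outer sum, yielding the first equality with $\zeta(2ip)$ in place of the partial sums.

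For the second equality I would substitute Euler's formula $\zeta(2k)=\frac{(-1)^{k+1}B_{2k}(2\pi)^{2k}}{2(2k)!}$ with $k=ip$ into each factor, so that $(\zeta(2ip))^{y_{k,i}}=\left(\frac{(-1)^{ip+1}B_{2ip}(2\pi)^{2ip}}{2(2ip)!}\right)^{y_{k,i}}$. The remaining work is to isolate the three global factors that can be pulled out of the sum over partitions. The accumulated power of $(2\pi)$ across the product is $\sum_i 2ip\,y_{k,i}=2p\sum_i i\,y_{k,i}=2pm$, which is independent of the partition and hence factors out as $(2\pi)^{2pm}$. The $\frac{1}{2}$ from Euler's formula merges with $\frac{1}{i^{y_{k,i}}}$ to give $\frac{1}{(2i)^{y_{k,i}}}$, which then recombines with $B_{2ip}$ and $(2ip)!$ into $\left(\frac{B_{2ip}}{(2i)(2ip)!}\right)^{y_{k,i}}$.

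The one step requiring care is the sign. The total exponent of $-1$ contributed by Euler's formula is $\sum_i(ip+1)y_{k,i}=p\sum_i i\,y_{k,i}+\sum_i y_{k,i}=pm+\sum_i y_{k,i}$, so the overall sign splits as $(-1)^{pm}\prod_i(-1)^{y_{k,i}}$. Here the factor $(-1)^{pm}$ is constant over all partitions and factors out alongside $(2\pi)^{2pm}$, whereas $\prod_i(-1)^{y_{k,i}}$ remains inside the sum and combines with $\frac{1}{(y_{k,i})!}$ to give $\frac{(-1)^{y_{k,i}}}{(y_{k,i})!}$. Assembling these pieces reproduces exactly the claimed right-hand side. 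I expect this sign accounting to be the only genuine obstacle; everything else is a routine application of Corollary \ref{Corollary 4.1} and Euler's even-zeta formula.
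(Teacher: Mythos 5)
Your proposal is correct and follows essentially the same route as the paper: apply the Reduction Theorem with $a_N=\frac{1}{N^{2p}}$ and then substitute Euler's formula $\zeta(2k)=\frac{(-1)^{k+1}(2\pi)^{2k}}{2(2k)!}B_{2k}$, with the sign and power bookkeeping worked out exactly as the paper intends. Your explicit justification of the passage to the limit $n\to\infty$ (finite outer sum, finite products, each factor converging since $2ip\geq 2$) is a small point of added rigor that the paper glosses over, but it does not constitute a different argument.
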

\begin{proof}
By applying Theorem \ref{Theorem 4.1}, 
\begin{equation*}
\begin{split}
\sum_{N_{m}=1}^{\infty}{\cdots \sum_{N_1=1}^{N_2}{\frac{1}{N_{m}^{2p} \cdots N_{1}^{2p}}}}
&=\sum_{\sum{i.y_{k,i}}=m}{\prod_{i=1}^{m}{\frac{1}{(y_{k,i})!i^{y_{k,i}}} \left(\sum_{N=1}^{\infty}{\left(\frac{1}{N^{2p}}\right)^{i}}\right)^{y_{k,i}}}} \\
&=\sum_{\sum{i.y_{k,i}}=m}{\prod_{i=1}^{m}{\frac{1}{(y_{k,i})!i^{y_{k,i}}} \left(\zeta(2ip)\right)^{y_{k,i}}}} 
.\end{split}
\end{equation*}
Euler proved that, for $m \geq 1$ (see \cite{arfken} for a proof),  
$$
\zeta(2m)=\frac{(-1)^{m+1}(2\pi)^{2m}}{2(2m)!}B_{2m}
.$$
Hence, 
\begin{equation*}
\begin{split}
\sum_{N_{m}=1}^{\infty}{\cdots \sum_{N_1=1}^{N_2}{\frac{1}{N_{m}^{2p} \cdots N_{1}^{2p}}}}
&=\sum_{\sum{i.y_{k,i}}=m}{\prod_{i=1}^{m}{\frac{1}{(y_{k,i})!i^{y_{k,i}}} \left((-1)^{ip+1}\frac{B_{2ip}(2\pi)^{2ip}}{2(2ip)!}\right)^{y_{k,i}}}} \\
&=(-1)^{pm}(2\pi)^{2pm}\sum_{\sum{i.y_{k,i}}=m}{\prod_{i=1}^{m}{\frac{(-1)^{y_{k,i}}}{(y_{k,i})!} \left(\frac{B_{2ip}}{(2i)(2ip)!}\right)^{y_{k,i}}}}
.\end{split}
\end{equation*}
\end{proof}
The following table summarizes some values of the zeta function for positive even arguments, 
$$
\zeta(2)=\frac{\pi^2}{6} \,\,\,\,\,\,
\zeta(4)=\frac{\pi^4}{90} \,\,\,\,\,\,
\zeta(6)=\frac{\pi^6}{945} \,\,\,\,\,\,
\zeta(8)=\frac{\pi^8}{9450} \,\,\,\,\,\,
\zeta(10)=\frac{\pi^{10}}{93555} \,\,\,\,\,\,
$$
$$
\zeta(12)=\frac{691\pi^{12}}{638512875} \,\,\,\,\,\,
\zeta(14)=\frac{2\pi^{14}}{18243225} \,\,\,\,\,\,
\zeta(16)=\frac{3617\pi^{16}}{325641566250} \,\,\,\,\,\,
.$$
By using the values in the above table as well as Theorem \ref{Theorem 4.3} and playing with different values, we can notice some identities. In particular, we can conjecture the following statement for the recurrent sum of $\frac{1}{N^2}$ (recurrent harmonic series with $2p=2$) for different values of $m$ (for different numbers of summations). This represents a generalization of the Basel Problem solved by Euler. However, this conjecture has already been proven, hence, we will directly use it to develop additional identities.  
\begin{theorem} \label{Conjecture 4.1}
For any $m \in \mathbb{N}$, we have that 
$$
\sum_{N_{m}=1}^{\infty}{\cdots \sum_{N_1=1}^{N_2}{\frac{1}{N_{m}^{2} \cdots N_{1}^{2}}}}
=\frac{(-1)^{m+1}2 \left(2^{2m-1}-1\right)B_{2m}\pi^{2m}}{(2m)!}
=\left(2-\frac{1}{2^{2(m-1)}}\right)\zeta(2m)
$$
or identically (from Theorem \ref{Theorem 4.1}), 
$$
\sum_{\sum{i.y_{k,i}}=m}{\prod_{i=1}^{m}{\frac{1}{(y_{k,i})!i^{y_{k,i}}} \left(\zeta(2i)\right)^{y_{k,i}}}}
=\frac{(-1)^{m+1}2 \left(2^{2m-1}-1\right)B_{2m}\pi^{2m}}{(2m)!}
=\left(2-\frac{1}{2^{2(m-1)}}\right)\zeta(2m)
.$$
\end{theorem}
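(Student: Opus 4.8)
The plan is to recognise the left-hand recurrent sum as a coefficient in an explicit generating function and then to read off that coefficient from the Laurent expansion of the cosecant. By Theorem \ref{Theorem 4.3} (taking $p=1$), the object of study equals the partition sum $\sum_{\sum{i.y_{k,i}}=m}{\prod_{i=1}^{m}{\frac{1}{(y_{k,i})!i^{y_{k,i}}}\left(\zeta(2i)\right)^{y_{k,i}}}}$, so it suffices to evaluate this sum in closed form. First I would note that the recurrent sums $\hat{R}_{m,1,\infty}$ are precisely the complete homogeneous symmetric functions of the variables $1/N^2$, whence their generating function factors as
$$\sum_{m=0}^{\infty}{\hat{R}_{m,1,\infty}\,t^m}=\prod_{N=1}^{\infty}{\frac{1}{1-\frac{t}{N^2}}}=\exp\left(\sum_{i=1}^{\infty}{\frac{\zeta(2i)}{i}t^i}\right),$$
the last equality coming from $-\log\left(1-\frac{t}{N^2}\right)=\sum_{i\geq1}{\frac{t^i}{iN^{2i}}}$ and interchanging the sums over $N$ and $i$. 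Expanding the exponential reproduces exactly the partition sum of Theorem \ref{Theorem 4.3}, so the two descriptions agree and I only need the coefficient of $t^m$.

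Next I would supply a closed form for this generating function using Euler's product $\frac{\sin \pi x}{\pi x}=\prod_{N=1}^{\infty}{\left(1-\frac{x^2}{N^2}\right)}$. Setting $x=\sqrt{t}$ gives $\prod_{N=1}^{\infty}{\left(1-\frac{t}{N^2}\right)}=\frac{\sin(\pi\sqrt{t})}{\pi\sqrt{t}}$, and therefore
$$\sum_{m=0}^{\infty}{\hat{R}_{m,1,\infty}\,t^m}=\frac{\pi\sqrt{t}}{\sin\left(\pi\sqrt{t}\right)}.$$
(Equivalently, $\log\frac{\sin\pi x}{\pi x}=-\sum_{i\geq1}{\frac{\zeta(2i)}{i}x^{2i}}$ shows directly that the exponential above collapses to this quotient.)

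It then remains to extract the coefficient of $t^m$. Using the classical cosecant expansion $\frac{z}{\sin z}=1+\sum_{n=1}^{\infty}{\frac{(-1)^{n+1}2\left(2^{2n-1}-1\right)B_{2n}}{(2n)!}z^{2n}}$ with $z=\pi\sqrt{t}$, so that $z^{2n}=\pi^{2n}t^n$, the coefficient of $t^m$ is
$$\frac{(-1)^{m+1}2\left(2^{2m-1}-1\right)B_{2m}\pi^{2m}}{(2m)!},$$
which is the first claimed expression. The second equality is then a direct algebraic rewriting: substituting Euler's evaluation $\zeta(2m)=\frac{(-1)^{m+1}(2\pi)^{2m}}{2(2m)!}B_{2m}$ and verifying the elementary identity $\left(2-\frac{1}{2^{2(m-1)}}\right)2^{2m-1}=2\left(2^{2m-1}-1\right)$ transforms the Bernoulli-number form into $\left(2-\frac{1}{2^{2(m-1)}}\right)\zeta(2m)$.

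I expect the main obstacle to be analytic bookkeeping rather than the underlying idea. One must justify that the interchange of summation over $N$ and $i$ inside the logarithm, and the manipulation of the infinite product, are legitimate; this follows from the absolute convergence of $\sum_{N\geq1}{N^{-2}}$, which makes $\prod_{N\geq1}{(1-t/N^2)^{-1}}$ analytic on a neighbourhood of $t=0$ and validates the formal coefficient extraction. The only ingredient external to the paper is the cosecant expansion in terms of Bernoulli numbers; once that standard identity is granted, the coefficient extraction and the final algebra are routine.
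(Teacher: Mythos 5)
Your proposal is correct, but it follows a genuinely different route from the paper. The paper's own proof is a citation-plus-algebra argument: it quotes the identity
$$
\sum_{1 \leq N_1 \leq \cdots \leq N_m}{\frac{1}{N_{m}^{2} \cdots N_{1}^{2}}}
=\frac{2^{2m-1}-1}{2^{2m-2}}\,\zeta(2m)
$$
from an external reference (Schneider), rewrites the prefactor as $2-\tfrac{1}{2^{2(m-1)}}$, substitutes Euler's evaluation $\zeta(2m)=\frac{(-1)^{m+1}(2\pi)^{2m}}{2(2m)!}B_{2m}$ to obtain the Bernoulli form, and finally invokes Theorem \ref{Theorem 4.1} to pass to the partition-sum version. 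You instead prove the key identity from scratch: you identify $\hat{R}_{m,1,\infty}$ as the complete homogeneous symmetric function in the variables $1/N^2$, so that its generating function is $\prod_{N\geq 1}{(1-t/N^2)^{-1}}=\pi\sqrt{t}/\sin(\pi\sqrt{t})$ by Euler's sine product, and you read off the coefficient of $t^m$ from the classical $z\csc z$ expansion; I checked the coefficient bookkeeping ($2(2^{2m-1}-1)=2^{2m}-2$ on both sides) and it is right. A pleasant by-product of your exponential form $\exp\bigl(\sum_{i\geq 1}{\zeta(2i)t^i/i}\bigr)$ is that it independently re-derives the partition-sum identity that the paper gets from Theorem \ref{Theorem 4.3} with $p=1$, so both displayed equations of the theorem fall out of one computation. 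What the paper's approach buys is brevity at the price of an unproved external dependency; what yours buys is a self-contained derivation (the only outside ingredient is the cosecant Bernoulli expansion, which is standard and essentially equivalent to Euler's $\zeta(2m)$ formula) together with a conceptual explanation of \emph{why} the answer has the form $\bigl(2-4^{1-m}\bigr)\zeta(2m)$. The analytic justifications you flag (absolute convergence of $\sum N^{-2}$, analyticity of the product for $|t|<1$) are exactly what is needed, so there is no gap.
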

\begin{proof}
In \cite{Schneider}, the following relation was proven but in another notation, 
$$
\sum_{1 \leq N_1 \leq \cdots \leq N_m}{\frac{1}{N_{m}^{2} \cdots N_{1}^{2}}}
=\left(\frac{2^{2m-1}-1}{2^{2m-2}}\right)\zeta(2m)
.$$
Hence, 
$$
\sum_{N_{m}=1}^{\infty}{\cdots \sum_{N_1=1}^{N_2}{\frac{1}{N_{m}^{2} \cdots N_{1}^{2}}}}
=\left(\frac{2^{2m-1}-1}{2^{2m-2}}\right)\zeta(2m)
=\left(2-\frac{1}{2^{2(m-1)}}\right)\zeta(2m)
.$$
Euler proved that, for $m \geq 1$,  
$$
B_{2m}=\frac{(-1)^{m+1}2(2m)!}{(2\pi)^{2m}}\zeta(2m) \,\,\,\,\,\,\,\, or \,\,\,\,\,\,\,\,
\zeta(2m)=\frac{(-1)^{m+1}(2\pi)^{2m}}{2(2m)!}B_{2m}
.$$
Hence, by substituting, we get 
$$
\sum_{N_{m}=1}^{\infty}{\cdots \sum_{N_1=1}^{N_2}{\frac{1}{N_{m}^{2} \cdots N_{1}^{2}}}}
=\frac{(-1)^{m+1}2 \left(2^{2m-1}-1\right)B_{2m}\pi^{2m}}{(2m)!}
.$$
The proof of the first equation is completed. \\
Applying Theorem \ref{Theorem 4.1}, we get the second equation. 
\end{proof}
\begin{corollary}
For any $m \in \mathbb{N}$, we have that 
$$
\sum_{\sum{i.y_{k,i}}=m}{\prod_{i=1}^{m}{\frac{(-1)^{y_{k,i}}}{(y_{k,i})!} \left(\frac{B_{2ip}}{(2i)(2ip)!}\right)^{y_{k,i}}}}
=\left(\frac{1}{2^{2m-1}}-1\right)\frac{B_{2m}}{(2m)!}
$$
\end{corollary}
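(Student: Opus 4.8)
The plan is to obtain this identity by equating two independently derived closed forms for one and the same object, namely the infinite recurrent series of $1/N^{2}$. Both forms are already supplied in the excerpt. On one hand, Theorem \ref{Theorem 4.3} expresses this recurrent series as the partition sum appearing on the left of the corollary, multiplied by an explicit power of $2\pi$ and a sign; on the other hand, Theorem \ref{Conjecture 4.1} gives a compact closed form for the same series in terms of $B_{2m}$ and $\pi^{2m}$. Matching the two and solving for the partition sum yields the claim. I would first point out that the symbol $p$ on the left-hand side of the statement should read $1$ (equivalently, the case $2p=2$), since the right-hand side carries no $p$-dependence and coincides exactly with the $p=1$ specialization; with that reading the two ingredients line up directly.

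First I would set $p=1$ in Theorem \ref{Theorem 4.3}, which gives
$$
\sum_{N_{m}=1}^{\infty}{\cdots \sum_{N_1=1}^{N_2}{\frac{1}{N_{m}^{2} \cdots N_{1}^{2}}}}
=(-1)^{m}(2\pi)^{2m}\sum_{\sum{i.y_{k,i}}=m}{\prod_{i=1}^{m}{\frac{(-1)^{y_{k,i}}}{(y_{k,i})!} \left(\frac{B_{2i}}{(2i)(2i)!}\right)^{y_{k,i}}}}.
$$
Next I would invoke Theorem \ref{Conjecture 4.1}, which asserts that the left-hand side equals $\dfrac{(-1)^{m+1}2\left(2^{2m-1}-1\right)B_{2m}\pi^{2m}}{(2m)!}$. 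Equating these and dividing through by the prefactor $(-1)^{m}(2\pi)^{2m}$ isolates the desired partition sum.

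The remainder is routine simplification of signs and powers of two. The sign ratio $(-1)^{m+1}/(-1)^{m}$ contributes a factor $-1$; the ratio $\pi^{2m}/(2\pi)^{2m}$ collapses to $2^{-2m}$; and combining the surviving constants gives $2\cdot 2^{-2m}\left(2^{2m-1}-1\right)=2^{-(2m-1)}\left(2^{2m-1}-1\right)=1-2^{-(2m-1)}$. Restoring the overall minus sign then produces precisely $\left(\dfrac{1}{2^{2m-1}}-1\right)\dfrac{B_{2m}}{(2m)!}$, as required.

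I do not anticipate any genuine obstacle here: the entire content of the corollary is that two separately established evaluations of the same recurrent series must agree, so the proof is essentially an algebraic reconciliation rather than a new argument. The only point demanding care is the bookkeeping of the signs and the factors $2^{\,\pm}$ and $\pi^{2m}$, which I have tracked above; everything else follows immediately from Theorem \ref{Theorem 4.3} and Theorem \ref{Conjecture 4.1}.
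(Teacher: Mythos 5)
Your proposal is correct and is essentially the paper's own proof: the paper likewise obtains the corollary by specializing Theorem \ref{Theorem 4.3} to $p=1$ and combining it with Theorem \ref{Conjecture 4.1}, and your algebraic reconciliation of the signs and powers of two checks out. Your observation that the stray $p$ in the statement should be read as $1$ is also consistent with the paper's intent.
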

\begin{proof}
By applying Theorem \ref{Theorem 4.3} with $p=1$ to Theorem \ref{Conjecture 4.1}, we obtain the theorem. 
\end{proof}
We will use this to prove that this recurrent harmonic series (or recurrent $p$-series) with $2p=2$ will converge to 2 as the number of summations $m$ goes to infinity. 
\begin{theorem} \label{Theorem 4.4} 
For any $m \in \mathbb{N}$, we have that 
$$
\lim_{m \to \infty}{{\left(\sum_{N_{m}=1}^{\infty}{\cdots \sum_{N_1=1}^{N_2}{\frac{1}{N_{m}^{2} \cdots N_{1}^{2}}}}\right)}}
=2
.$$
\end{theorem}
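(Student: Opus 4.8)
The plan is to reduce everything to the closed-form evaluation already established in Theorem \ref{Conjecture 4.1} and then carry out an elementary limit computation. First I would invoke Theorem \ref{Conjecture 4.1}, which gives for every $m$ the exact identity
$$
\sum_{N_{m}=1}^{\infty}{\cdots \sum_{N_1=1}^{N_2}{\frac{1}{N_{m}^{2} \cdots N_{1}^{2}}}}
=\left(2-\frac{1}{2^{2(m-1)}}\right)\zeta(2m).
$$
Thus the quantity whose limit we must compute is $\left(2-2^{-2(m-1)}\right)\zeta(2m)$, and the whole problem splits into analyzing the two factors separately, after which I would recombine them using the product rule for limits.

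The first factor is immediate: since $2^{-2(m-1)}\to 0$ as $m\to\infty$, we have $2-2^{-2(m-1)}\to 2$.

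The second factor requires showing $\lim_{m\to\infty}\zeta(2m)=1$. Here I would write $\zeta(2m)-1=\sum_{n=2}^{\infty} n^{-2m}$, which is strictly positive, and bound it from above by peeling off two powers: for every $n\ge 2$ and $m\ge 1$ one has $n^{-2m}=n^{-(2m-2)}n^{-2}\le 2^{-(2m-2)}n^{-2}$, so that
$$
0<\zeta(2m)-1=\sum_{n=2}^{\infty}\frac{1}{n^{2m}}
\le\frac{1}{2^{2m-2}}\sum_{n=2}^{\infty}\frac{1}{n^{2}}
=\frac{1}{2^{2m-2}}\left(\frac{\pi^{2}}{6}-1\right).
$$
As the right-hand side tends to $0$, the squeeze theorem yields $\zeta(2m)\to 1$.

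Combining the two limits gives $2\cdot 1=2$, which is the claim. The only genuinely analytic ingredient — and hence the main (and rather mild) obstacle — is the justification that $\zeta(2m)\to 1$; once the elementary tail bound above is in place, everything else is routine. One could instead bypass the appeal to $\zeta$ altogether by working from the Bernoulli-number form $\frac{(-1)^{m+1}2(2^{2m-1}-1)B_{2m}\pi^{2m}}{(2m)!}$ together with Stirling's estimate for $(2m)!$, but the direct tail bound on $\zeta(2m)$ is noticeably cleaner and I would prefer it.
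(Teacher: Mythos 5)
Your proof is correct and follows essentially the same route as the paper: apply Theorem \ref{Conjecture 4.1} to get the closed form $\left(2-2^{-2(m-1)}\right)\zeta(2m)$ and take the limit of each factor. The only difference is that you supply an explicit tail bound to justify $\zeta(2m)\to 1$, whereas the paper simply cites this as known.
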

\begin{proof}
It is known that $\lim_{m \to \infty} \zeta(2m)=1$. By applying Theorem \ref{Conjecture 4.1}, 
$$
\lim_{m \to \infty}{{\left(\sum_{N_{m}=1}^{\infty}{\cdots \sum_{N_1=1}^{N_2}{\frac{1}{N_{m}^{2} \cdots N_{1}^{2}}}}\right)}}
=\lim_{m \to \infty}{\left(2-\frac{1}{2^{2(m-1)}}\right)} \times \lim_{m \to \infty}{\zeta(2m)}
=2
.$$
\end{proof}
\begin{example}
{\em For $m=4$, we have }
\begin{dmath*}
\sum_{N_4=1}^{\infty}{\sum_{N_3=1}^{N_4}{\sum_{N_2=1}^{N_3}{\sum_{N_1=1}^{N_2}{\frac{1}{N_4^2 N_3^2 N_2^2 N_1^2}}}}}
=\frac{1}{24}\left(\sum_{N=1}^{\infty}{\frac{1}{N^2}}\right)^{4}
+\frac{1}{4}\left(\sum_{N=1}^{\infty}{\frac{1}{N^2}}\right)^{2}\left(\sum_{N=1}^{\infty}{\frac{1}{N^4}}\right)
+\frac{1}{3}\left(\sum_{N=1}^{\infty}{\frac{1}{N^2}}\right)\left(\sum_{N=1}^{\infty}{\frac{1}{N^6}}\right)
+\frac{1}{8}\left(\sum_{N=1}^{\infty}{\frac{1}{N^4}}\right)^{2}
+\frac{1}{4}\left(\sum_{N=1}^{\infty}{\frac{1}{N^8}}\right)
=\frac{127\pi^8}{604800}
{=\left(2-\frac{1}{2^{2(3)}}\right)\zeta(8)
\approx 1.992466004
.}\end{dmath*}
\end{example}
Similarly, we will use this to show that the sum (over all non-negative values of $m$) of the  recurrent harmonic series with $2p=2$ will diverge. 
\begin{theorem} \label{Theorem 4.5} 
We have that, 
$$
\sum_{m=0}^{\infty}{\left(\sum_{N_{m}=1}^{\infty}{\cdots \sum_{N_1=1}^{N_2}{\frac{1}{N_{m}^{2} \cdots N_{1}^{2}}}}\right)}
\to \infty
.$$
\end{theorem}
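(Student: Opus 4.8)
The plan is to prove divergence via the term test (the $n$-th term test for divergence): a series whose general term fails to tend to zero cannot converge. Since the exact value and the limiting behaviour of each summand have already been established, no genuine estimation is required here — the argument follows immediately from the two preceding results, and the work lies entirely in assembling them.

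First I would write the general term explicitly. By Theorem~\ref{Conjecture 4.1}, for each $m$ the inner recurrent sum satisfies
\[
\sum_{N_{m}=1}^{\infty}{\cdots \sum_{N_1=1}^{N_2}{\frac{1}{N_{m}^{2} \cdots N_{1}^{2}}}}
=\left(2-\frac{1}{2^{2(m-1)}}\right)\zeta(2m),
\]
so the object under study is precisely the single-variable series $\sum_{m=0}^{\infty}{\left(2-\frac{1}{2^{2(m-1)}}\right)\zeta(2m)}$. Next I would invoke Theorem~\ref{Theorem 4.4}, which establishes that this general term converges to $2$ as $m\to\infty$. Because the limit of the summand is $2\neq 0$, the necessary condition for convergence of a series is violated, and hence the series diverges.

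If a more quantitative conclusion is wanted, I would observe that, since the terms converge to $2$, there is an index $m_0$ beyond which every term exceeds $1$; the partial sums then grow at least linearly in $m$, so the series in fact diverges to $+\infty$, which matches the $\to\infty$ claimed in the statement.

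The only point requiring care is bookkeeping at the lower endpoint: one should check that the $m=0$ term is consistent with the order-$0$ recurrent sum being $1$, which it is, since evaluating $\left(2-\frac{1}{2^{2(m-1)}}\right)\zeta(2m)$ at $m=0$ gives $(2-4)\zeta(0)=(-2)\!\left(-\tfrac{1}{2}\right)=1$. Beyond this sanity check, there is no real obstacle: the divergence test applies directly to this series of positive terms, and Theorems~\ref{Conjecture 4.1} and~\ref{Theorem 4.4} supply everything needed.
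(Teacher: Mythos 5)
Your proposal is correct and follows essentially the same route as the paper: both reduce the series to $\sum_{m\ge 0}\left(2-\frac{1}{2^{2(m-1)}}\right)\zeta(2m)$ via Theorem~\ref{Conjecture 4.1} and then observe that the terms stay bounded away from zero (the paper bounds each term below by $1$ directly, including the $m=0$ check you also perform, while you phrase it first as the term test and then give the same linear lower bound on the partial sums).
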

\begin{proof}
Applying Theorem \ref{Conjecture 4.1},
$$ 
\sum_{m=0}^{\infty}{\left(\sum_{N_{m}=1}^{\infty}{\cdots \sum_{N_1=1}^{N_2}{\frac{1}{N_{m}^{2} \cdots N_{1}^{2}}}}\right)}
=\sum_{m=0}^{\infty}{\left(2-\frac{1}{2^{2(m-1)}}\right)\zeta(2m)}
.$$
For $m=0$, we have $\left(2-\frac{1}{2^{2(m-1)}}\right)\zeta(2m)=(2-4)(-1/2)=1$. 
Knowing that $\zeta(2m) \geq 1$ for $m \geq 1$ and noticing the following identity for $m \geq 1$, 
$$
1 \leq \left(2-\frac{1}{2^{2(m-1)}}\right) \leq 2
.$$
Hence, for $m \geq 0$, 
$$
\left(2-\frac{1}{2^{2(m-1)}}\right)\zeta(2m) \geq 1
.$$
Thus, 
$$
\lim_{n \to \infty} {\sum_{m=0}^{n}{\left(\sum_{N_{m}=1}^{\infty}{\cdots \sum_{N_1=1}^{N_2}{\frac{1}{N_{m}^{2} \cdots N_{1}^{2}}}}\right)}} 
\geq \lim_{n \to \infty} {\sum_{m=0}^{n}{1}} 
=\infty
.$$
Hence, this sums is infinite. 
\end{proof}

\bibliographystyle{apa}
\bibliography{Recurrent_Sums}

\end{document}